\pdfoutput=1
\documentclass[10pt,a4paper]{amsart} 
\newif\ifpersonal
\newif\ifpersonalsection
\personaltrue
\personalsectiontrue
\usepackage{amsmath,amsthm,amssymb,mathrsfs,mathtools,bm,centernot} 
\usepackage{microtype,lmodern} 
\usepackage{enumitem,comment,braket,xspace,tikz-cd,adjustbox} 
\usepackage[utf8]{inputenc} 
\usepackage[T1]{fontenc} 
\usepackage[centering,vscale=0.7,hscale=0.7]{geometry}
\usepackage[hidelinks,linktoc=all]{hyperref}
\usepackage[capitalize]{cleveref}
\allowdisplaybreaks
\usepackage{graphicx}
\usepackage{tikz-cd}
\graphicspath{ {./images/} }
\numberwithin{equation}{section}
\theoremstyle{plain}
\newtheorem{theorem}[equation]{Theorem}
 
\newtheorem{lemma}[equation]{Lemma}
\newtheorem*{lemma*}{Lemma}

\newtheorem{claim}[equation]{Claim}
\newtheorem*{claim*}{Claim}
\newtheorem{conjecture}[equation]{Conjecture}
\newtheorem{proposition}[equation]{Proposition}
\newtheorem*{proposition*}{Proposition}
\newtheorem{corollary}[equation]{Corollary}
\theoremstyle{definition}
\newtheorem{definition}[equation]{Definition}
\newtheorem{definition-theorem}[equation]{Definition-Theorem}
\newtheorem{definition-lemma}[equation]{Definition-Lemma}
\newtheorem{construction}[equation]{Construction}
\newtheorem{assumption}[equation]{Assumption}
\newtheorem{assumptions}[equation]{Assumptions}

\newtheorem{notation}[equation]{Notation}
\theoremstyle{remark}

\newtheorem{remark}[equation]{Remark}
\newtheorem{remarks}[equation]{Remarks}



\newcommand{\uB}{\underline{B}}

\newcommand{\ocY}{\widebar{\cY}}
\newcommand{\hcY}{\widehat{\cY}}



\DeclareFontFamily{U}{BOONDOX-calo}{\skewchar\font=45 }
\DeclareFontShape{U}{BOONDOX-calo}{m}{n}{<-> s*[1.05] BOONDOX-r-calo}{}
\DeclareFontShape{U}{BOONDOX-calo}{b}{n}{<-> s*[1.05] BOONDOX-b-calo}{}
\DeclareMathAlphabet{\mathcalboondox}{U}{BOONDOX-calo}{m}{n}

\newcommand{\ADM}{\operatorname{ADM}}

\newcommand{\an}{\mathrm{an}}

\newcommand{\gp}{\mathrm{gp}}

\newcommand{\mss}{\operatorname{ss}}
\newcommand{\msss}{\operatorname{sss}}

\newcommand{\sm}{\mathrm{sm}}

\newcommand{\trop}{\mathrm{trop}}


\DeclareMathOperator{\Aut}{Aut}

\DeclareMathOperator{\bogus}{Bogus}

\DeclareMathOperator{\Cox}{Cox}

\DeclareMathOperator{\Eff}{Eff}

\DeclareMathOperator{\Hom}{Hom}

\DeclareMathOperator{\Ample}{Ample}
\DeclareMathOperator{\MovFan}{MovFan}

\DeclareMathOperator{\NE}{NE}

\DeclareMathOperator{\Nef}{Nef}
\DeclareMathOperator{\Pic}{Pic}

\DeclareMathOperator{\Proj}{Proj}

\DeclareMathOperator{\Sk}{Sk}

\DeclareMathOperator{\Spec}{Spec}
\DeclareMathOperator{\uSpec}{\underline{\Spec}}
\DeclareMathOperator{\Sing}{Sing}

\DeclareMathOperator{\tor}{tor}

\DeclareMathOperator{\TV}{TV}
\DeclareMathOperator{\hTV}{\widehat{\TV}}

\DeclareMathOperator{\ex}{ex}


\ifpersonal
\newcommand{\todo}[1]{\textcolor{red}{(Todo: #1)}}
\newcommand{\personal}[1]{\textcolor[rgb]{0,0,1}{(Personal: #1)}}
\newcommand{\discussion}[1]{\textcolor{violet}{(Discussion: #1)}}
\else
\newcommand{\todo}[1]{\ignorespaces}
\newcommand{\personal}[1]{\ignorespaces}
\newcommand{\discussion}[1]{\ignorespaces}
\fi

\newcommand{\bbA}{\mathbb A}

\newcommand{\bbC}{\mathbb C}

\newcommand{\bbE}{\mathbb E}

\newcommand{\bbG}{\mathbb G}

\newcommand{\bbN}{\mathbb N}
\newcommand{\bbP}{\mathbb P}

\newcommand{\bbQ}{\mathbb Q}
\newcommand{\bbR}{\mathbb R}
\newcommand{\bbT}{\mathbb T}

\newcommand{\bbU}{\mathbb U}
\newcommand{\bbV}{\mathbb V}
\newcommand{\VVert}{{\bbV}_{\operatorname{ert}}}
\newcommand{\obbV}{\widebar{\mathbb V}}

\newcommand{\bbY}{\mathbb Y}
\newcommand{\hbbY}{\widehat{\bbY}}

\newcommand{\bbZ}{\mathbb Z}
\newcommand{\bbX}{\mathbb X}
\newcommand{\obbX}{\overline{\bbX}}

\newcommand{\cA}{\mathcal A}

\newcommand{\cD}{\mathcal D}

\newcommand{\cE}{\mathcal E}

\newcommand{\cL}{\mathcal L}

\newcommand{\cO}{\mathcal O}

\newcommand{\cT}{\mathcal T}

\newcommand{\cX}{\mathcal X}
\newcommand{\cY}{\mathcal Y}

\makeatletter
\let\save@mathaccent\mathaccent
\newcommand*\if@single[3]{%
	\setbox0\hbox{${\mathaccent"0362{#1}}^H$}%
	\setbox2\hbox{${\mathaccent"0362{\kern0pt#1}}^H$}%
	\ifdim\ht0=\ht2 #3\else #2\fi
}
\newcommand*\rel@kern[1]{\kern#1\dimexpr\macc@kerna}
\newcommand*\widebar[1]{\@ifnextchar^{{\wide@bar{#1}{0}}}{\wide@bar{#1}{1}}}
\newcommand*\wide@bar[2]{\if@single{#1}{\wide@bar@{#1}{#2}{1}}{\wide@bar@{#1}{#2}{2}}}
\newcommand*\wide@bar@[3]{%
	\begingroup
	\def\mathaccent##1##2{%
		\let\mathaccent\save@mathaccent
		\if#32 \let\macc@nucleus\first@char \fi
		\setbox\z@\hbox{$\macc@style{\macc@nucleus}_{}$}%
		\setbox\tw@\hbox{$\macc@style{\macc@nucleus}{}_{}$}%
		\dimen@\wd\tw@
		\advance\dimen@-\wd\z@
		\divide\dimen@ 3
		\@tempdima\wd\tw@
		\advance\@tempdima-\scriptspace
		\divide\@tempdima 10
		\advance\dimen@-\@tempdima
		\ifdim\dimen@>\z@ \dimen@0pt\fi
		\rel@kern{0.6}\kern-\dimen@
		\if#31
		\overline{\rel@kern{-0.6}\kern\dimen@\macc@nucleus\rel@kern{0.4}\kern\dimen@}%
		\advance\dimen@0.4\dimexpr\macc@kerna
		\let\final@kern#2%
		\ifdim\dimen@<\z@ \let\final@kern1\fi
		\if\final@kern1 \kern-\dimen@\fi
		\else
		\overline{\rel@kern{-0.6}\kern\dimen@#1}%
		\fi
	}%
	\macc@depth\@ne
	\let\math@bgroup\@empty \let\math@egroup\macc@set@skewchar
	\mathsurround\z@ \frozen@everymath{\mathgroup\macc@group\relax}%
	\macc@set@skewchar\relax
	\let\mathaccentV\macc@nested@a
	\if#31
	\macc@nested@a\relax111{#1}%
	\else
	\def\gobble@till@marker##1\endmarker{}%
	\futurelet\first@char\gobble@till@marker#1\endmarker
	\ifcat\noexpand\first@char A\else
	\def\first@char{}%
	\fi
	\macc@nested@a\relax111{\first@char}%
	\fi
	\endgroup
}
\makeatother

\newcommand{\oT}{\widebar T}

\newcommand{\oV}{\widebar V}

\newcommand{\oX}{\widebar X}

\newcommand{\tV}{\widetilde V}

%
%


\renewenvironment{abstract}{%
	\quotation
	\small
	\textbf{\textit{\abstractname.}} 
}{\endquotation}

\begin{document}
	
\title{Theta function basis of the
  Cox ring of positive $2d$ Looijenga pairs}

\author{Sean Keel}
\address{Sean Keel, Department of Mathematics, 1 University Station C1200, Austin, TX 78712-0257, USA}
\email{keel@math.utexas.edu}

\author{Logan White}
\address{Logan White, Department of Mathematics, 1 University Station C1200, Austin, TX 78712-0257, USA}
\email{loganw23@utexas.edu}

\subjclass[2010]{Primary 14J33; Secondary 14G22, 14N35, 14J32, 14T05, 13F60}
\keywords{Frobenius structure, mirror symmetry, log Calabi-Yau, skeletal curve, non-archimedean geometry, rigid analytic geometry, cluster algebra, scattering diagram, wall-crossing, broken lines}

\maketitle
\begin{abstract} \cite[24.7]{KY22} gives a conjectural construction of
  a canonical {\it theta function} basis for the regular functions on
  a partial minimal model of a smooth affine log CY variety with maximal boundary.
  Here we carry out the construction to give a canonical basis of theta functions
  for the Cox ring of a two dimensional Looijenga pair with affine interior,
  with structure constants naive counts of $k$-analytic disks in the total space
  of the universal deformation of the mirror (which, as this is dimension two,
  is isomorphic to the log CY surface itself), confirming
  the two dimensional case of conjecture \cite[24.11]{KY22}. 
  \end{abstract}



\setcounter{tocdepth}{1} 
\tableofcontents

\section{Introduction and Statement of results}
Let $(X,E)$ be a smooth Looijenga
pair with affine interior $U \coloneqq X \setminus E$. Let
$$
u:\cT \coloneqq \uSpec(\oplus_{L \in \Pic(X)} \cL) \to X
$$
be the universal torsor (where $\cL$ is the invertible $\cO_X$ module of
sections of $L$ ), and $G := \cT|_U \to U$ its restriction. $G$ is a smooth affine
log CY with maximal boundary, let

$V \coloneqq \Spec(A_{\Sk(G)})$, where

$$
A_{\Sk(G)} \coloneqq \oplus_{g \in \Sk(G)(\bbZ)} \bbZ \cdot \theta_g
$$
is the absolute mirror in the sense of 
\cite[1.6]{KY22}, where $\Sk$ indicates the Kontsevich-Soibelman essential skeleton,
see \cite[pg. 435]{KY20}. The canonical $T_{A_1(X,\bbZ)} \coloneqq \Spec \bbZ[\Pic(X)]$ action
on $G$ induces $w:V \to T_{\Pic(X)}$, and its tropicalisation
$w: \Sk(V)(\bbZ) \to \Pic(X)$. 

  Here we prove:

  \begin{theorem} \label{thm:simple}
  $\Cox(X)$ has a basis, parameterized by $\Sk(V)(\bbZ)$, homogeneous
  for the natural $\Pic(X)$ grading, with degree given by the tropicalisation
  $w:\Sk(V)(\bbZ) \to \Pic(X)$, canonical up to individual scaling.
  \end{theorem}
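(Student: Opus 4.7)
The strategy is to apply the theta function construction of \cite[24.7]{KY22} to the smooth affine log CY $G$, and then to identify the resulting basis of $\cO(V)=A_{\Sk(G)}$ with a basis of $\Cox(X)$ by exploiting the affine universal torsor. First I would observe that for a positive Looijenga pair with affine interior, $X$ is a Mori dream space and the universal torsor $\cT$ is affine, with $\cO(\cT)=\bigoplus_{L\in\Pic(X)}H^0(X,L)=\Cox(X)$; the open $G=\cT|_U$ is obtained by removing the preimage of $E$, and is a smooth affine log CY with maximal boundary in the sense of \cite{KY22}. The canonical $T_{\Pic(X)}$-action on $\cT\to X$ restricts to the torsor action on $G\to U$, and the induced map $w:V\to T_{\Pic(X)}$, together with its tropicalisation $w:\Sk(V)(\bbZ)\to\Pic(X)$, encodes the $\Pic(X)$-grading intrinsically on the mirror side.

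Second, I would run the construction of \cite[24.7]{KY22} for $G$: this produces canonical (up to individual scaling) theta functions $\theta_g$ indexed by $g\in\Sk(V)(\bbZ)=\Sk(G)(\bbZ)$, defined as naive counts of $k$-analytic disks in the universal deformation of the mirror of $G$. In dimension two the universal deformation is isomorphic to the log CY surface itself, so the disks can be analysed concretely via a scattering diagram on $\Sk(G)$ obtained by pulling back the scattering diagram of the underlying surface $(X,E)$ along the tropical torsor map. Homogeneity of each $\theta_g$ of weight $w(g)$ under the $T_{\Pic(X)}$-action is then immediate from its description as a sum of $T_{\Pic(X)}$-monomials propagated by wall-crossings that are themselves homogeneous.

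Third --- and this is the step I expect to be the main obstacle --- I would upgrade the formal $\theta_g$ to honest elements of $\cO(\cT)=\Cox(X)\subset\cO(G)$. A priori each broken-line / disk count is a possibly infinite expression convergent only in a formal neighbourhood of a single chart; one must show that for fixed $g$ only finitely many broken lines contribute, and that the resulting Laurent polynomial extends across all of $\cT$. This is where positivity of $(X,E)$ enters: it controls the support of the scattering diagram and bounds the length of broken lines landing at a fixed target with fixed $w$-degree, reducing each $\theta_g$ to a finite sum of monomials that visibly extends to a regular function on the affine $\cT$.

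Finally, to see the $\theta_g$ form a basis of $\Cox(X)$ rather than merely a linearly independent subset, I would argue degree-by-degree in the $\Pic(X)$-grading. Linear independence is automatic from the distinct leading terms attached to the chart of $\Sk(G)$ containing $g$. For spanning, I would compare graded dimensions: the span of $\{\theta_g:w(g)=L\}$ is identified through its leading terms with a canonical lattice in $H^0(X,L)$ whose rank equals $\#\bigl(w^{-1}(L)\cap\Sk(V)(\bbZ)\bigr)$, while an Okounkov-type / Riemann--Roch computation available for positive Looijenga pairs yields the same count for $\dim H^0(X,L)$. Matching the two counts in every degree forces equality and completes the proof.
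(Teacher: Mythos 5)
Your proposal goes wrong at a basic structural level.  You set out to ``identify the resulting basis of $\cO(V)=A_{\Sk(G)}$ with a basis of $\Cox(X)$'' and assert ``$g\in\Sk(V)(\bbZ)=\Sk(G)(\bbZ)$''.  Both are false.  The ring $\cO(V)=A_{\Sk(G)}$, with basis $\Sk(G)(\bbZ)$, is the coordinate ring of the mirror $V=G^{\vee}$ and is not $\Cox(X)$.  The basis of $\Cox(X)=\cO(\cT)$ lives in the {\it double} mirror $A_{\Sk(V)}$ (note: $\Sk$ of $V$, not $G$), and the two skeleta $\Sk(G)$ and $\Sk(V)$ are dual to one another, not equal --- in the toric model case they are $N$ and $M=N^{*}$.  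Concretely, the basis of $\Cox(X)$ is $C(\bbZ)\subset\Sk(V)(\bbZ)$, the integer points of the cone $C$ cut out by $\theta_{Z\in\Sk(G)}^{\trop}\geq 0$ over irreducible $Z\subset E$; see \cref{const:mirror} and \cref{cor:poly}.  You are also conflating the definition of a theta function with the definition of a structure constant: in the \cite{KY22} framework the $\theta_{g}$ are formal basis elements, and the disk counts appear as structure constants, not as a formula for $\theta_{g}$; there is no scattering diagram and no ``pulling back along the tropical torsor map''.

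Beyond these conceptual slips, the step you flag as hardest --- passing from a linearly independent set to a spanning set of $H^{0}(X,L)$ for all $L$ --- is where the real content lies, and your ``compare graded dimensions with an Okounkov-type / Riemann--Roch computation'' is circular: the statement that $\dim H^{0}(X,L)$ equals the number of integer points of $P_{L}=w^{-1}(L)\cap C$ {\it is} \cref{cor:poly}, one of the conclusions, and there is no independent count of $P_{L}(\bbZ)$ available before the theorem is proved.  The paper's actual route is entirely different: \cref{thm:simple} is deduced from the modular \cref{thm:modular}, whose proof rests on (i) the Lai--Zhou theorem that $V\to T_{\Pic(X)}$ is the universal family of deformations of the Looijenga pair, which immediately gives the double mirror statement $A_{\Sk(V),L_{X}}=\cO(\bbG)$ (\cref{cor:dm}); (ii) the symmetry and independence conjectures for this Fock--Goncharov dual pair, supplied by \cite{CMMM}; (iii) VGIT for the $T_{A_{1}(X,\bbZ)}$-action on $A_{C}$ (Berchtold--Hausen, Cox--Little--Schenck), together with an induction on the charge of $U$ using toric blowups and blowdowns of internal $-1$ curves to compare GIT quotients of $A_{C}$ with those of the known Cox ring.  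None of these ingredients appear in your proposal; without them there is no argument that the candidate subalgebra actually equals $\Cox(X)$.
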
 

  Each irreducible component $Z \subset E$ determines a point of $\Sk(G)(\bbZ)$
  (namely the divisor $u^{-1}(Z) \subset \cT$), and thus a basis element
  $\theta_{Z \in \Sk(G)} \in A_{\Sk(G)}$. Let $C \subset \Sk(V)$ be cut out by
  the inequalities $\theta^{\trop}_{Z \in \Sk(G)} \geq 0$, for all $Z \subset E$.

\begin{proposition} \label{cor:poly}
    Notation as above. For $L \in \Pic(X)$, let
    $P_L \coloneqq w^{_-1}(L) \cap C \subset \Sk(V)$.

    The canonical coordinate points of the linear system $\bbP(H^0(X,L))$ in \cref{thm:simple} are parameterized
    by the integer points $P_L(\bbZ)$. 
    In particular
    the dimension of the vector space of sections is the number of integer points
    in $P_L$.
  \end{proposition}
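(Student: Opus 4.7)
The plan is to derive \cref{cor:poly} as a direct refinement of \cref{thm:simple}: decompose the theta basis of $\Cox(X)$ into its $\Pic(X)$-graded pieces, and observe that the positivity conditions defining $C$ are automatically satisfied by the integer indices appearing in degree $L$.

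First I would invoke the homogeneity part of \cref{thm:simple}. Each basis vector $\theta_g$ lies in the graded piece of degree $w(g)$, so restricting to the $L$-component yields
\[
H^0(X,L) \;=\; \bigoplus_{\substack{g\,\in\,\Sk(V)(\bbZ) \\ w(g)=L}} \bbZ\cdot\theta_g,
\]
whence $\dim H^0(X,L) = |w^{-1}(L)(\bbZ)|$ and the canonical coordinate points of $\bbP(H^0(X,L))$ are the lines spanned by these $\theta_g$. It therefore suffices to match the index set $w^{-1}(L)\cap\Sk(V)(\bbZ)$ with $P_L(\bbZ)$.

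Next I would show that $w^{-1}(L)\cap\Sk(V)(\bbZ)$ is already contained in $C$, i.e.\ that $\theta^{\trop}_Z(g)\geq 0$ for every component $Z\subset E$ and every index $g$ of degree $L$. The crux is the mirror-symmetric reciprocity between the canonical bases of $\cO(V)=A_{\Sk(G)}$ and of the theta algebra of $G$: for every $p\in\Sk(G)(\bbZ)$ and every $q\in\Sk(V)(\bbZ)$,
\[
\theta^{\trop}_p(q)\;=\;\ord_p(\theta_q),
\]
where $\ord_p$ denotes the divisorial valuation attached to the integer point $p$. Specialising to $p=u^{-1}(Z)$, which by the discussion preceding the proposition is the integer point of $\Sk(G)(\bbZ)$ determined by $Z$, this reads $\theta^{\trop}_Z(g)=\ord_{u^{-1}(Z)}(\theta_g)$. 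Since by \cref{thm:simple} every $\theta_g\in\Cox(X)=\Gamma(\cT,\cO_\cT)$ is regular on the whole of $\cT$, its order along the divisor $u^{-1}(Z)$ is non-negative, which gives the required inequality and thereby the identification $w^{-1}(L)(\bbZ)=P_L(\bbZ)$.

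I expect the reciprocity identity $\theta^{\trop}_p(q)=\ord_p(\theta_q)$ to be the main obstacle. It must be extracted from the broken-line / $k$-analytic disk construction of the theta basis recalled in the introduction, and one has to verify carefully that the specific divisorial valuation $\ord_{u^{-1}(Z)}$ on $\cT$ is the one paired with the tropical function $\theta^{\trop}_Z$ under this duality (rather than, say, a rescaling or a valuation on a different compactification). Once that reciprocity is in hand, the proposition follows by combining the two displays above with \cref{thm:simple}.
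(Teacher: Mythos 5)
Your argument is circular at its crux. You read \cref{thm:simple} as saying the theta basis of $\Cox(X)$ is indexed by \emph{all} of $\Sk(V)(\bbZ)$, obtain $H^0(X,L)=\bigoplus_{w(g)=L}\bbZ\cdot\theta_g$, and then use the reciprocity $\theta_Z^{\trop}(g)=\ord_{u^{-1}(Z)}(\theta_g)$ together with regularity of $\theta_g$ on $\cT$ to conclude $g\in C$. But regularity of $\theta_g$ on $\cT$ \emph{is} the statement $\theta_g\in\Cox(X)$, which holds exactly for $g\in C(\bbZ)$ and not for all $g$ with $w(g)=L$. The phrase ``parameterized by $\Sk(V)(\bbZ)$'' in \cref{thm:simple} must be read as an injection into $\Sk(V)(\bbZ)$ with image $C(\bbZ)$; the precise graded statement is $A_{C,L_X}=\Cox(\bbX)$ from \cref{cox:cor}. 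The inclusion $w^{-1}(L)(\bbZ)\subset C$ that you try to establish is in fact false: already in the toric case one has $\Sk(V)(\bbZ)=\bbZ^n$ while $C(\bbZ)=\bbN^n$, and $w^{-1}(L)$ is an entire affine subspace of which $P_L=w^{-1}(L)\cap C$ is a bounded polytope. If your argument were sound it would force $C=\Sk(V)$, i.e.\ $\cO(\cT)=\cO(G)$, contradicting $G\subsetneq\cT$.

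What the reciprocity step does prove is the easy inclusion $\{g:\theta_g\ \text{extends to}\ \cT\}\subset C(\bbZ)$, and this identity $\theta_Z^{\trop}(g)=\ord_{u^{-1}(Z)}(\theta_g)$ is indeed the symmetry of \cref{prop:ndsym}(2). But the converse inclusion --- that every $g\in C(\bbZ)$ contributes a regular section, and that these are a basis --- is the genuine content of the paper and cannot be extracted from reciprocity alone. Once $A_{C,L_X}=\Cox(\bbX)$ is in hand, \cref{cor:poly} needs no reciprocity at all: by construction $A_C$ has $L_X$-basis $C(\bbZ)$, the $\Pic(X)$-grading is the one given by $w$ (\cref{def:skVweight}), so the degree-$L$ piece has basis $w^{-1}(L)\cap C(\bbZ)=P_L(\bbZ)$, and restricting the free $L_X$-module $\Cox(\bbX)$ to the fibre over $[X]$ gives the stated basis of $H^0(X,L)$.
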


  \begin{remark} \label{rem:poly} $\Sk(V)$ is an integer piecewise affine space. Each
    choice of open algebraic torus in $V$ identifies this with the
    real vector space given by the co-characters. In this structure
    $C \subset \Sk(V)$ is a convex cone, and the slices $P_L$ are
    convex polygons, which, when $U$ is a torus is the polygon associated
    with the polarized toric surface  $(X,L)$. 
    \end{remark} 
  
  The Cox ring of a projective variety, together with its natural grading
  by the Picard group, is a full Mori theoretic invariant. It
  knows for example all the birational contractions, see \cite{HK96}.
  So more interesting
  than the vector space basis, are the structure constants for the
  multiplication rule in this basis (because it is the Cox {\it Ring} that
  really matters). These involve naive counts of punctured
  Berkovich disks in $V$, a precise statement requires some preliminaries,
  see \cref{rem:structure}. However for one deformation of $X$ there is a simple   statement: Let
  $X_e$ be the minimal resolution of the deformation of $X$ corresponding
  to $e \in T_{\Pic(X)}$ (see \cite[\S 5]{MLP}). We note that
  for any smooth deformation $(Y,E)$ of $(X,E)$, 
  $\Eff(Y) \supset \Eff(X)$ (there can be more effective classes on $Y$), with
  equality iff $Y \setminus E$ is affine. Define
  $$
  \Cox'(Y) \coloneqq \oplus_{L \in \Eff(X)} H^0(Y,L) \subset \Cox(Y)
  $$

  \begin{theorem} Notation as immediately above.
     $\Cox'(X_e)$ 
      has a canonical basis parameterized by $\Sk(V)(\bbZ)$ with
      structure constants non-negative integers, naive counts of punctured
      Berkovich disks  in $V$.
    \end{theorem}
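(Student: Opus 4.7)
The plan is to realize $\Cox'(X_e)$ as the ring of global sections of the fiber $V_e \coloneqq w^{-1}(e) \subset V$ of the universal mirror, whose theta function product is, by the general construction of \cite{KY22}, given by counts of punctured Berkovich disks in $V$.

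First, I would upgrade the vector space identification of \cref{thm:simple} to a ring identification for the deformed Cox ring $\Cox'(X_e)$. By the $T_{\Pic(X)}$-equivariance of the mirror family $w : V \to T_{\Pic(X)}$, fiberwise specialization at $e$ on the mirror side should correspond to the deformation $X \rightsquigarrow X_e$ on the A-side. Because $U_e = X_e \setminus E$ remains effectively affine on the $\Eff(X)$-graded piece (which is precisely the defining restriction of $\Cox'(X_e)$), the absolute mirror construction applied fiberwise should yield a canonical identification $\Gamma(V_e, \cO) \cong \Cox'(X_e)$ as graded rings, with basis still parameterized by $\Sk(V)(\bbZ)$.

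Second, the ring structure on $\Gamma(V_e, \cO)$ is given by the theta function product from \cite[24.7]{KY22}:
\[
\theta_{g_1} \cdot \theta_{g_2} \;=\; \sum_g N(g_1,g_2;g)\, \theta_g,
\]
where each $N(g_1,g_2;g) \in \bbZ_{\geq 0}$ is the naive count of punctured Berkovich disks in $V$ with prescribed asymptotic conditions at $g_1$, $g_2$, and $g$. In the 2D Looijenga pair case these disks can be enumerated directly, since the universal mirror $V$ is itself (a family of) log CY surfaces, and the convex polygon description from \cref{cor:poly} guarantees each sum is finite.

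The main obstacle is the identification in step one: showing that the algebraic product on $\Cox'(X_e)$ matches the specialized theta product on $V_e$ with no additional scattering or wall-crossing corrections beyond the disk counts. This reduces to a compatibility between the Cox ring multiplication and the broken-line/disk enumerative product at the fiber over $e$, which I expect to follow from the positivity of the 2D Looijenga pair (controlling the signs appearing in the scattering diagram) combined with the finite-type, polyhedral description of each graded piece. Establishing this compatibility functorially enough to pass cleanly through specialization at $e \in T_{\Pic(X)}$ — essentially a family version of the broken-line/disk equivalence from \cite[\S 24]{KY22} — will be the technical heart of the argument.
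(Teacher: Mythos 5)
Your first step contains a fundamental error: the identification $\Gamma(V_e,\cO) \cong \Cox'(X_e)$ cannot hold. The fiber $V_e = w^{-1}(e)$ of the mirror family $V \to T_{\Pic(X)}$ is a log CY \emph{surface} (of Krull dimension $2$), namely a deformation of $U$, whereas $\Cox'(X_e)$ is a ring of Krull dimension $2 + \operatorname{rank}\Pic(X)$ (being a subring of the coordinate ring of the universal torsor over $X_e$). You have conflated the fiber of the mirror family with its total space.

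The correct mechanism, which is implicit in the paper via \cref{thm:modular} together with \cref{rem:structure}, is the reverse specialization: the double-mirror subalgebra $A_{C,L_X}$ (an $L_X = \bbZ[\Pic(X)^*]$-algebra with theta basis $C(\bbZ) \subset \Sk(V)(\bbZ)$ and structure constants given by $L_X$-valued disk counts in the \emph{total space} $V$) is identified with $\Cox(\bbX)$, the Cox ring of the quniversal \emph{family} over $T_{\Pic(X)}$. One then specializes $\Cox(\bbX) \otimes_{L_X} k$ along the marked period point $\Phi_{X_e}\colon L_X \to k$ of the deformation $X_e$; for $e$ the identity of $T_{\Pic(X)}$ this $\Phi$ is the augmentation $Z^{\gamma} \mapsto 1$, so the structure constants for $\Cox'(X_e)$ become $\sum_{\gamma} \eta(P_1,\dots,P_n,Q,\gamma)$, the total (finite) counts of punctured Berkovich disks in $V$ over all curve classes, hence non-negative integers. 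Your proposal never invokes the specialization at the period point, which is exactly where the stated positivity and integrality come from; the ``family version of the broken-line/disk equivalence'' you flag as the technical heart is unnecessary once the role of $\Cox(\bbX)$ as the $L_X$-algebra is in place, since one is simply base-changing a ring already known to have disk-count structure constants.
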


    We have a similar statement for $X$, see \cref{rem:structure}.

    Next we give the background necessary to give full statements of
    our main results:

\begin{construction} \label{const:mirror} 
Let $(X,E)$ be a smooth two dimensional Looijenga pair (i.e.
$X$ is a smooth projective (necessarily rational) surface and $E \subset X$
is an anti-canonical cycle of rational curves) with $U \coloneqq X \setminus E$
affine. 

Let
$A_{\Sk(G)} \coloneqq  \oplus_{g \in \Sk(G)(\bbZ)} \bbZ \cdot \theta_{g}$ be the absolute mirror
algebra to $G$, \cite[1.6]{KY22}. $V \coloneqq \Spec(A_{\Sk(G)})$.

Each irreducible component $Z \subset E$
determines a point $Z \in \Sk(G)(\bbZ)$ (namely $u^{-1}(Z) \subset \cT$) and so
$\theta_{Z} \in A_{\Sk(G)}$. Let $C \subset \Sk(V)$ be defined by
$\theta_Z^{\trop} \geq 0$, over all $Z \subset E$. The $T_{A_1(X,\bbZ)}$ action
on $G$ determines $V \to T_{\Pic(X)}$ and its tropicalisation
$w: \Sk(V)(\bbZ) \to \Pic(X)$, see \cref{def:skVweight}. By the main result of \cite{LZ23},
$V \to T_{\Pic(X)}$ is modular, the universal family of \cite[4.24]{MLPv1}.
In particular each of  the quniversal (short for quasi-universal)
families $\bbX$ of \cite[3.1]{MLPv1},
see \cref{sec:quf}, determines a small resolution
$\tV \to V$ and partial minimal model $\tV \subset \bbX$, and so
the mirror construction of \cite{KY22} can be applied to $V$. Let
$\bbX \subset \obbX$ be an snc compactification, and 
$A_{\Sk(V),L_{\obbX}}$ be the associated mirror algebra \cite[18.3]{KY22}, where
(for any $X$) 
$L_X \coloneqq \bbZ[\Pic(X)^*]$. Here there is a natural isomorphism
$\Pic(\bbX) = \Pic(X)$. 
$V \to T_{\Pic(X)} \coloneqq \Pic(X) \otimes_{\bbZ} \bbG_m$
induces a natural $\Pic(X)$ grading on $A_{\Sk(V),L_{\obbX}}$ (see \cref{sec:ta}) with
each theta function
homogeneous with degree given by the tropicalisation $w: \Sk(V)(\bbZ) \to \Pic(X)$.
The free submodule 
$A_{C,L_{\obbX}} \subset A_{\Sk(V),L_{\obbX}}$, with basis elements corresponding
to integer points of $C$,  
is a graded subalgebra. For each 
$L \in \Pic(X)$ we have an
associated GIT quotient $A_{C,L_{\obbX}}//L \to T_{\Pic(\obbX)}$. See \cref{nota:GIT}
for an
explanation of our abbreviated notation.

Each $Z \subset E$
determines a natural {\it face} of $C$ (given by the additional
requirement $\theta_Z^{\trop} = 0$), the complement is the basis of an
ideal of $A_{C,L_X}$, which
together give a boundary to each GIT quotient, which we indicate by $\partial$.

Choose a splitting $T_{\Pic(X) = \Pic(\bbX)} \subset T_{\obbX}$.
\end{construction}

Let $\ADM \subset \Aut(\Pic(X))$ be the group of automorphism preserving
the intersection product, the nef cone, and the classes of the boundary
divisors. See \cite[4.2,4.5]{MLP}. We write $\bbG_m^E$ for the split algebraic
torus with one factor for each irreducible component of $E$. Each has a modular
equivariant action  on $\bbX \to T_{\Pic(X)}$ ($\ADM$ by changing the marking
of $\Pic$, $\bbG_m^E$ changing the marking of the boundary). 
\cref{thm:simple} follows immediately from a more precise modular version: 

\begin{theorem} \label{thm:modular} Notation as immediately above. 
  Let $L \in \Ample(X)$ be GIT general. The restriction
  $$
  (A_{C,L_X}//L,\partial) \to T_{\Pic(X)}
  $$
  is isomorphic to the quniversal family $\bbX \to T_{\Pic(X)}$ associated
  with $L$. $A_{C,L_X}$, with its natural $\Pic(X)$ grading is isomorphic to
  $\Cox(\bbX)$ (this is independent of the choice of $L$, see \cref{rem:sqm}).
  The theta function basis elements are independent of
  the above choice of splitting $T_{\Pic(X) = \Pic(\bbX)} \subset T_{\obbX}$ up to
  multiplication by individual characters, so give canonical
  coordinate sections in $\bbP(H^0(\bbX,M))$, for each 
  $M \in \Pic(X) = \Pic(\bbX)$.

  The modular action of $\ADM$ on $\bbX$ permutes these coordinate sections,
  the modular action of $\bbG_m^E$ fixes them.
  
Structure constants for the multiplication rule are naive non-negative integer
counts of punctured Berkovich disks in the total space of $V$. 
\end{theorem}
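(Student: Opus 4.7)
The plan is to obtain $A_{C,L_X}$ by specializing the mirror algebra construction of \cite{KY22} to the partial minimal model $\tV \subset \bbX$, identify the resulting GIT family with the quniversal family, and then read off the Cox ring by varying the polarization.

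First, using \cite{LZ23} together with the quniversal construction of \cite[3.1]{MLPv1}, I would fix a small resolution $\tV \to V$ realizing $\tV \subset \bbX$ as a partial minimal model and an snc compactification $\bbX \subset \obbX$. The construction of \cite[18.3]{KY22} then produces the mirror algebra $A_{\Sk(V),L_{\obbX}}$, whose theta basis is indexed by $\Sk(V)(\bbZ)$ and whose structure constants are by construction non-negative integer counts of punctured Berkovich disks in the total space of $V$. The inequalities $\theta_Z^{\trop} \geq 0$ cut out the cone $C$, and the subset of theta functions indexed by $C(\bbZ)$ spans a graded subalgebra $A_{C,L_{\obbX}}$; the $T_{\Pic(X)}$-action on $V$ equips it with the claimed $\Pic(X)$-grading, in which each $\theta_g$ is homogeneous of degree $w(g)$.

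The core step is the identification $(A_{C,L_X}//L,\partial) \simeq (\bbX,E) \to T_{\Pic(X)}$ for GIT-general $L \in \Ample(X)$. By the general properties of the KY22 construction, this GIT quotient is a flat projective family whose fibers are partial minimal models of the corresponding fibers of $V$, with boundary cut out by the faces of $C$. The modular characterization of $V$ from \cite[4.24]{MLPv1} forces a fiberwise identification with $\bbX$ matching the markings of $\Pic$ and of the boundary cycle; a flatness argument over $T_{\Pic(X)}$ then promotes this to an isomorphism of families. To extract the Cox ring identification, I would vary $L$ through the chambers of $\Ample(X)$: any two GIT-general polarizations yield SQM-equivalent partial minimal models sharing the same Cox ring, so the $\Pic(X)$-graded ring $A_{C,L_X}$ is independent of $L$ and equals $\Cox(\bbX)$ by matching graded pieces to $H^0(\bbX,M)$. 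The canonicity and equivariance statements then follow formally: the theta basis depends only on $\Sk(V)(\bbZ)$, so changing the splitting $T_{\Pic(X)} \subset T_{\obbX}$ only rescales each $\theta_g$ by a character; the $\ADM$ action permutes the boundary labels and hence the faces of $C$, inducing the claimed permutation of the basis; and the $\bbG_m^E$ action preserves $\Sk(V)$ pointwise, so fixes each $\theta_g$.

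The main obstacle I foresee is precisely this identification of the GIT quotient with the quniversal family, since three pieces of extra data---the $\Pic$-marking, the labelling of the boundary cycle, and the polarization class---must be matched across two a priori different modular interpretations. Concretely, one has to verify that the face stratification of $C$ corresponds exactly to the anticanonical cycle $E$ rather than to some twist by an element of $\ADM \times \bbG_m^E$, which amounts to carefully aligning the normalization conventions used in \cite{LZ23}, \cite{MLPv1}, and the KY22 construction; the $\bbG_m^E$-equivariance statement is essentially what pins the marking down once the $\ADM$-ambiguity is broken by the identity element. Once this bookkeeping is in hand, the counting interpretation of the structure constants and the canonicity of the basis are immediate consequences of the corresponding features of the KY22 mirror algebra.
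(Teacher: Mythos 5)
There is a genuine gap, and it is exactly where you flag uncertainty. Your ``core step'' --- identifying $(A_{C,L_X}//L,\partial)$ with $(\bbX,\bbE)$ --- cannot be obtained from \cite[4.24]{MLPv1} plus a flatness argument, because the modular characterization coming from \cite{LZ23} only identifies the \emph{interiors} of the fibers with the deformations of $U$. It gives no a priori control over the compactification: the same interior admits many non-isomorphic Looijenga compactifications, even within a fixed deformation class, and nothing in the general properties of the \cite{KY22} GIT quotient (flatness, projectivity, normality) singles out the one you want. The paper makes this point explicitly (``So the interiors are right. How do we know we are getting the right compactification?''). Your appeal to ``the modular characterization forces a fiberwise identification'' is precisely the statement that needs proving.

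What the paper actually does, and what your proposal omits entirely, is an induction on the \emph{charge}, coupled with the observation (the \cite{HK96} philosophy) that VGIT for the $T_{A_1(X,\bbZ)}$-action recovers all birational contractions of $\bbX$, in particular toric models. The mechanism is: pass to a toric blowup with two disjoint internal $-1$ curves meeting different boundary components (\cref{lem:ch-1}, \cref{prop:tbred}); then the subrings of $A_C$ with weights in $p^*\Nef(\oX)$ are controlled by formal completions of the GHKI family at the vertex (\cref{prop:bbY}, \cref{prop:eqdef}), which are compared to the analogous data for $\oX$; the resulting commutative square of GIT quotients (\cref{eq:cd}) is shown in \cref{prop:induct} to consist of blowdowns of $-1$ curves matching $X \to X_i \to X_{1,2}$, which by induction pins down the deformation type of the compactification (\cref{cor:tm}). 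The base case is the trivial-Picard-group situation, handled separately in \cref{sec:prod}. None of this is ``formal''; the heart of the argument is precisely the combination of formal-local models at the vertex of the GHKI family with careful VGIT bookkeeping (\cref{prop:amps}, \cref{prop:lpsings}) showing regularity and isomorphism near the boundary.

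A second, smaller gap: the Cox ring identification is not obtained by ``matching graded pieces'' across chambers of $\Ample(X)$. The paper first proves $A_{\Sk(V),L_X} = \cO(\bbG)$ (\cref{cor:dm}) by showing the semistable locus for a GIT-general ample is the universal torsor $\bbT$ of $\bbX$ (\cref{prop:fut}, \cref{lem:descend}) and the localization at the boundary monomial $\pi$ recovers $A_{\Sk(V)}$; then it invokes the symmetry and independence results of \cite{CMMM} (\cref{prop:ndsym}) to upgrade this to $A_{C,L_X} = \Cox(\bbX)$ via the three-conjecture philosophy (\cite[24.6]{KY22}). Your claim that ``the $\bbG_m^E$ action preserves $\Sk(V)$ pointwise'' is also incorrect --- by \cref{prop:action}(1) it acts by nontrivial translations on $\Sk(V)$; what is true is that it fixes the marked period point, and the fixing of theta \emph{divisors} in the universal family is deduced from that, not from pointwise fixing of the skeleton.
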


\begin{remark} In the theorem $H^0(\bbX,M))$ is a finite rank
  free $L_{\bbX}$ module,
  and $\bbP(H^0(\bbX,M))$ means the associated (trivial) projective space
  bundle over $T_{\Pic(X)}$.
  \end{remark}

\begin{remarks}
  As noted above, the Cox ring of projective variety (together with its canonical
  grading by the Picard group) knows all the birational contractions.
  The birational geometry of $\bbX$ is very rich, see
    \cite[\S 4]{MLPv1}. 
    By the above all its contractions are determined
    by naive counts of analytic disks on the universal deformation of $X$.
    The assumption
    of affine interior implies $\bbX$ is log Fano and so $\Cox(\bbX)$ is finitely
    generated, by deep results of \cite{BCHM}. Our mirror symmetric approach
    gives a quite different explanation for finite generation. Without
    the assumption of affineness of $U$, $\Cox(\bbX)$ need not be finitely
    generated and we do not know what to expect. However, we think the
    first paragraph of \cref{thm:modular} will hold, by a very similar
    argument. See \cref{thm:mpp} and \cref{rem:neg}. 

     When $L$ is ample, we expect the canonical theta function basis of $H^0(X,L)$
  could be obtained by some slight improvements on the methods of \cite{HPV24}
  (the log Kulikov models on which the paper is based depend on auxiliary choices,
  so some further argument is needed to show the theta functions obtained are canonical).
  But the
  main Mori theoretic force is in the ring structure of $\Cox(\bbX)$,
  and for this we
  need interaction between theta functions for different line bundles,
  we do not see
  how to approach this via \cite{HPV24}.

  The GIT fan for the action of $T_{A_1(X,\bbZ)}$ on $\Cox(\bbX)$ is,
  by \cite{HK96} identified with the Moving fan of $\bbX$. By \cite[4.14]{MLPv1}
  this is identified with the Weyl chambers given by the roots. 

\end{remarks}

\begin{remark}[Stucture constants for the mirror algebra] \label{rem:structure} For any snc compactification
$\obbX$ of a small resolution of an affine log CY, $V$,  with maximal boundary,
\cite{KY22} associates an algebra structure on the free $L_{\obbX}$ module
$$
A_{\Sk(V),L_{\obbX}} \coloneqq \oplus_{P\in \Sk(V)(\bbZ)} L_{\obbX} \cdot \theta_P
$$
with basis $\Sk(V)(\bbZ)$. The algebra structure
is determined by structure constants 
$$
\theta_{P_1} \cdot \dots \theta_{P_k} = \sum_{Q \in \Sk(V)(\bbZ)} \mu(P_1,\dots,P_n,Q) 
\cdot \theta_Q
$$
for $\mu(P_1,\dots,P_n,Q) \in L_{\obbX} \coloneqq \bbZ[N_1(\obbX,\bbZ)]$
defined by
$$
\mu(P_1,\dots,P_n,Q) = \sum_{C \in \NE(\obbX,\bbZ)} \eta(P_1,\dots,P_n,Q,C) Z^C
$$
where $\eta(P_1,\dots,P_n,Q,C) \in \bbZ_{\geq 0}$ is a naive (meaning
cardinality of a finite set) count of punctured Berkovich disks in $V$ with
class $C$, see \cite[1.2]{KY22} for the precise definition of the count. 

We apply this to the double mirror algebra $A_C$. Once we choose the splitting
$s: \Pic(\obbX)^* \twoheadrightarrow \Pic(X)^* = \Pic(\bbX)^*$, the structure constants for 
$A_{C,L_X}$ are the sum of the structure constants for $A_{C,L_{\obbX}}$ over
all classes in $s^{-1}(C)$ (only finitely many will be non-zero), and then
finally for a smooth deformation $(Y,E)$ of $(X,E)$, together with a marking of
$\Pic$ and the boundary, we have the associated marked period point, see
\cite[1.11.3]{MLP}, 
$\Phi: L_X \to \bbC$, and the structure constants for $\Cox'(Y)$ are given by
applying $\Phi$ to the constants for $A_{C,L_X}$. 
\end{remark} 

\paragraph{\bfseries Acknowledgments}
We benefited a lot from detailed conversations with Paul Hacking
and Pierrick Bousseau.
The paper is everywhere influenced by the philosophy and results of
our previous joint work with Tony Yu.
Both of us received support from NSF grant DMS-1561632.

Throughout the paper we work over an algebraically closed field $k$, with
trivial valuation. 

  \section{The quasi-universal families} \label{sec:quf} 

  In \cite[\S 3]{MLPv1} an elementary blowup (and blowdown)
  construction is given 
  of a family
  $(\bbX,\bbE) \to T_{\Pic(X)}$ with fibres the smooth Looijenga pairs
  deformation equivalent to $(X,E)$. The family is not unique, but any two
  are birational (over $T_{\Pic(X)}$) by a rational map that induces a fibre
  by fibre isomorphism -- this map is an SQM (small $\bbQ$-factorial
  modification) and as such corresponds to a maximal cone in
  $\MovFan(\bbX)$. \cite{MLPv1} identifies this fan with the hyperplane
  arrangement on $\Nef(X)$ given by the roots. See \cite[4.14]{MLPv1},
  \cite[3.2]{MLP}. 
  In the positive
  case (see \cite[4.20]{MLPv1}) there is a canonical blowdown $\bbX \to \obbX$ (over $T_{\Pic(X)}$)
  which on each fibre contracts all complete curves in the interior.
  $(\obbX,E) \to T_{\Pic(X)}$ is modular, an honest universal family of
  Looijenga pairs with affine interior with at worst du Val singularities
  away from the boundary, see \cite[4.24]{MLPv1}.  We refer to the $\bbX$ as quniversal (for
  quasi-universal) families.

  \begin{remark} \label{rem:sqm} Note as the various quniversal families are all small $\bbQ$-factorial
    modifications (SQMs) of one another, their Cox rings and Mori Fans are
    canonically identified.
    \end{remark}

Next we describe the idea behind the proof.

  \subsection{The Philosophy of the proof}   \label{rem:conjs}

  We think here dimension two, but the basic construction applies in general as
  long as the mirror $V$ (to $G$,  notation as in \cref{const:mirror}) has the right singularities for \cite{KY22} to apply, namely admits a small resolution. 
  \begin{remark} One could instead use \cite{GS}, which has no such
    singularity restrictions. But crucial for all our applications are basic
    convexity results, \cite[20.1,24.7]{KY22}, and \cite[15.6]{KY20}. These are elementary
    in the \cite{KY22} theory. But the analogs are not known (and to us
    appear difficult) in the \cite{GS} theory. \end{remark}
  
  We start with $(X,E)$ a smooth
  Looijenga pair. We assume affine interior $U := X \setminus E$, though
  much of the construction will work in general, see \cref{rem:small}.

  As above we take $u:\cT \to X$ the universal torsor and 
  and $G:= \cT|_U \to U$. $G \subset \cT$ is a {\it partial minimal
    model}, meaning the volume form of $G$ has a (necessarily simple)
  pole along each divisor in the complement $\cT \setminus G$.

  By
  definition
  $\cO(\cT) = \Cox(X) \coloneqq \oplus_{L \in \Pic(X)} H^0(X,L)$.
 
      \cite[\S 24, 24.1]{KY22} gives  a 
       conjectural scheme for producing a theta function basis of
      $\cO(\cT)$ for any partial minimal model $G \subset \cT$ of an affine
      log CY with maximal boundary (here we are using notation for the application,
      but this philosophy holds in general).

      A  theta function basis of $\cO(\cT) \subset \cO(G)$ would
      follow 
      from three basic conjectures:

\begin{conjecture}[Double Mirror] \label{conj:dm}
            $V^{\vee} \coloneqq (U^{\vee})^{\vee} = U$ up to coefficients, see \cref{rem:coeff}.
    \end{conjecture}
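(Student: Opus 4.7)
The plan is to establish the Double Mirror identity for a positive two-dimensional Looijenga pair $(X,E)$ with affine interior $U$ by combining the modular interpretation of the mirror from \cite{LZ23} with the explicit self-duality of two-dimensional scattering diagrams. First I would make $U^\vee$ explicit by applying the \cite{KY22} construction directly to $U$ (rather than to $G = \cT|_U$): this yields $U^\vee := \Spec(A_{\Sk(U)})$, an affine log Calabi-Yau surface with maximal boundary, whose coordinate algebra is spanned by theta functions indexed by $\Sk(U)(\bbZ)$. The torsor relation between $G$ and $U$ under $T_{\Pic(X)}$ gives a $T_{\Pic(X)}$-equivariant presentation of $V = \Spec(A_{\Sk(G)})$ as a family over $T_{\Pic(X)}$ whose general fiber is $U^\vee$; the modular statement of \cite{LZ23} then identifies this with the quniversal family, and isolates $U^\vee$ as the fiber corresponding to the period point of $(X,E)$.

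Next I would iterate the construction: pick a quniversal small resolution $\tV \to V$ and an snc compactification $\bbX \subset \obbX$ as in \cref{const:mirror}, then apply the \cite{KY22} mirror algebra construction again, now to a smooth model of $U^\vee$ equipped with its universal torsor. The resulting algebra $A_{\Sk((U^\vee)^\vee)}$ should be matched with $\cO(U)$ tensored with the relevant coefficient ring. For this matching I would use the Gross--Hacking--Keel description of theta functions on 2d affine log CY surfaces as broken-line counts, together with the combinatorial self-duality of the consistent scattering diagram on $\Sk(U)$ — in two dimensions this duality exchanges the incoming walls recording the boundary of $U$ with the outgoing broken-line contributions defining theta functions on $U^\vee$, and is essentially tractable. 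Combining this with the compatibility of the $\Pic(X)$-grading (which is built into \cref{const:mirror}) should identify $(U^\vee)^\vee$ with $U$ after the prescribed coefficient change.

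The main obstacle is the precise bookkeeping of the "up to coefficients" qualifier. The \cite{KY22} mirror algebra is naturally graded by the Mori cone of an snc compactification, so after two applications one has a priori different coefficient rings (indexed by $\NE(\obbX)$ versus $\NE$ of a compactification of a small resolution of $(U^\vee)^\vee$), and establishing the canonical identification requires precise control of the snc and small-resolution data at each stage. A secondary technical hurdle is independence of choices: one must verify the double mirror does not depend on the choice of quniversal $\bbX$ and snc compactification $\obbX$, which should follow from the SQM equivalence of \cref{rem:sqm} together with the convexity inputs \cite[20.1, 24.7]{KY22} and \cite[15.6]{KY20} that are emphasised as essential in the preceding remark.
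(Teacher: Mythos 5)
The paper does not prove \cref{conj:dm} by the route you propose. You suggest iterating the mirror construction and then closing the loop by appealing to a ``combinatorial self-duality of the consistent scattering diagram on $\Sk(U)$'' that exchanges incoming walls for outgoing broken-line contributions. No such self-duality theorem is established in the literature and none is supplied here; this is precisely the hard direct statement that the paper's architecture is designed to sidestep. Indeed the paper is explicit in \cref{rem:conjs} that after \cite{CMMM} disposes of Symmetry and Independence, ``the main issue for us will be the double mirror conjecture, see \cref{cor:dm}'' --- i.e.\ the version of double mirror actually used is proved much later, not in the philosophy section where the conjecture is stated, and not by scattering-diagram combinatorics.

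The paper's actual argument runs as follows. First, \cite{LZ23} is used once, to identify $V \to T_{\Pic(X)}$ with the universal family of \cite[4.24]{MLPv1}; this is recorded in \cref{prop:ext}, \cref{cor:LZ}, \cref{cor:versal}. Then the machinery of \cref{sec:GITbasics} and the induction on charge (\cref{ss:ind}, \cref{prop:tbred}, \cref{prop:bbY}, \cref{prop:eqdef}, \cref{prop:amps}, \cref{prop:induct}, \cref{cor:tm}) is used to analyse the VGIT quotients of $A_C$, culminating in \cref{thm:mpp}: for GIT-general $L \in \Ample(X)$, $(A_{C,L_{\obbX}}//L,\partial)$ is the quniversal family $\bbX$. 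From this, \cref{prop:fut} identifies $A_{C,X}^{\mss}(L) \to \bbX$ as the universal torsor, and \cref{cor:dm} ($A_{\Sk(V),L_X} = \cO(\bbG)$) follows by localisation at the monomial $\pi$ and a codimension-two argument. The involutivity $(U^\vee)^\vee = U$ is therefore an output of the modular/GIT package, not an input established by wall-crossing combinatorics. Your proposal also misidentifies the principal difficulty: it is not coefficient bookkeeping (real but manageable, see \cref{rem:coeff}) but the absence of any proof of the claimed scattering self-duality. Finally, your first paragraph reads the modular statement of \cite{LZ23} as isolating $U^\vee$ inside the family; the paper never needs such a fiberwise identification, only the global statement that the family is universal.
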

    \begin{conjecture}[Symmetry]
      $\theta_{u \in \Sk(u)}^{\trop}(v) =
      \theta_{v \in \Sk(V)}^{\trop}(u)$

      (See \cref{nota:subscript}  for
      an explanation of the notation. )
    \end{conjecture}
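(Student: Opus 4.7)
The plan is to identify both sides of the asserted symmetry with a single manifestly symmetric tropical count. First I would unpack $\theta_u^{\trop}(v)$ using the broken line definition of theta functions from \cite{KY22}. For $u \in \Sk(U)(\bbZ)$, the element $\theta_u \in \cO(V)$ is built as a sum, over broken lines on $V$ asymptotic to $u$ and ending at a chosen general basepoint, of monomials in an affine chart. Interpreting $\theta_u^{\trop}(v)$ as the (negative) order of the pole of $\theta_u$ along the toroidal stratum of an snc compactification of $V$ corresponding to $v$, it can then be written as a minimum over such broken lines $\gamma$ of the pairing $\langle m_\gamma, v\rangle$ of the attached exponent $m_\gamma$ with $v$. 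By the convexity results \cite[20.1,24.7]{KY22} and \cite[15.6]{KY20}, this minimum is attained and the broken lines achieving it are tropically rigid objects.

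Next I would invoke the Double Mirror conjecture $V^\vee = U$ (assumed in the philosophy) to put both sides on a common combinatorial footing. Under this identification, the scattering diagram on $\Sk(V)$ that produces theta functions on $V$ indexed by points of $\Sk(U)$ is dual, up to orientation reversal, to the scattering diagram on $\Sk(U)$ that produces theta functions on $U$ indexed by points of $\Sk(V)$. The cleanest route, I think, is to introduce a \emph{two-ended} broken line or tropical cylinder in the scattering diagram, with asymptotic directions $u$ and $v$ at its two ends. A tropical count of such cylinders, weighted by their total bending against the wall functions, is manifestly symmetric in $u \leftrightarrow v$. I would then prove that degenerating the basepoint of a one-ended broken line to infinity in direction $v$ recovers this two-ended object, thereby identifying $\theta_u^{\trop}(v)$ with the symmetric count, and the same argument with $u$ and $v$ swapped identifies $\theta_v^{\trop}(u)$ with it as well.

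An alternative, and possibly quicker, route is to extract the symmetry from the multiplicative structure: the structure constants $\mu(P_1,\dots,P_n,Q)$ of \cref{rem:structure} are naive disk counts and hence manifestly symmetric under permutations of the inputs. Taking $\theta_u \cdot \theta_v$, reading off the leading tropical behaviour along the stratum indexed by $v$ (respectively $u$), and comparing should produce the required identity without going through the two-ended formalism.

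The main obstacle will be to make the Double Mirror identification precise at the level of scattering diagrams, not merely at the skeletal level: the wall supports and wall-crossing automorphisms on the $U$ and $V$ sides must be matched, with broken lines transported consistently. Secondarily, controlling the $v \to \infty$ degeneration of broken lines and verifying that no contributions are lost at walls accumulating near the boundary requires care. In the two-dimensional setting of the paper this should be tractable by the explicit wall structure coming from $(-1)$-classes and the roots parametrising $\MovFan(\bbX)$, but a conceptual argument valid in all dimensions seems to require genuinely new input, presumably a two-pointed refinement of the \cite{KY22} disk-counting formalism.
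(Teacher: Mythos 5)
The paper does not supply an independent proof of this conjecture; it is stated as one of three ``basic conjectures'' in the philosophy section, and the form of it actually used is imported from elsewhere. Concretely, the paper observes that $G$ and its mirror $V$ are Fock--Goncharov dual cluster varieties (citing \cite{BGCA} and \cite{TravisCoxCluster}), and then \cref{prop:ndsym} derives both Symmetry and Independence (in the precise form needed, with $Z \subset E$ as one of the two arguments) from the cluster-theoretic results of \cite{CMMM}, whose proof of finiteness of broken lines gives the Full Fock--Goncharov conjecture for this pair. The only direct argument in the paper touching Symmetry is the trivial reduction in \cref{sec:prod}: if the three conjectures hold for $U$, they hold for $U \times \bbG_m$ (via $A_{\Sk(U \times \bbG_m)} = A_{\Sk(U)} \otimes A_{\Sk(\bbG_m)}$), which is used to handle the charge-$\leq 1$ base case. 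So your first route --- broken lines, dual scattering diagrams, a two-ended tropical object manifestly symmetric in $u \leftrightarrow v$ --- is the correct heuristic, but it amounts to re-proving the content of \cite{CMMM} from scratch; the paper deliberately avoids exactly this work. You correctly flag the hard step (matching wall supports and wall-crossing automorphisms on both sides of the Double Mirror identification) and correctly anticipate that it is already tractable in dimension two but not cheap in general.

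Your ``alternative, possibly quicker route'' has a genuine gap, however. The structure-constant symmetry you invoke --- $\mu(P_1,\dots,P_n,Q)$ is invariant under permutations of the inputs --- is a symmetry of the arguments of a single disk count inside one mirror algebra, say $\cO(V)$ with basis $\Sk(U)(\bbZ)$. But in the conjecture, $u \in \Sk(U)$ and $v \in \Sk(V)$ index theta functions in \emph{different} algebras: $\theta_{u \in \Sk(U)} \in A_{\Sk(U)} = \cO(V)$ is tropicalized and evaluated at the point $v \in \Sk(V)$, while $\theta_{v \in \Sk(V)} \in A_{\Sk(V)} = \cO(V^\vee)$ (conjecturally $\cO(U)$) is tropicalized and evaluated at $u \in \Sk(U)$. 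The product $\theta_u \cdot \theta_v$ in your sketch is not defined --- the two factors live on mirror-dual sides, not in one ring --- so the permutation symmetry of $\mu$ does not apply. The asserted symmetry relates a disk count in $V$ to a disk count in $V^\vee$; establishing it is a duality statement and not a mere symmetry of inputs, which is precisely why it needs either the two-ended broken-line argument you sketch or the cluster-theoretic machinery of \cite{CMMM}.
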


\begin{conjecture}[Independence]
    
  $(\sum a_v \Theta_v)^{\trop} =
  \min \Theta_v^{\trop}$,
  for any linear combination
  with non-zero coefficients. Note a priori we have $\geq$ rather than equal. 
\end{conjecture}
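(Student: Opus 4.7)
\medskip

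The a priori inequality $(\sum a_v \Theta_v)^{\trop} \geq \min_v \Theta_v^{\trop}$ is just the ultrametric bound on valuations. The content is ruling out cancellation in leading terms, so my plan is to work pointwise on $\Sk(V)$ and exhibit, at each valuation $w \in \Sk(V)(\bbZ)$, a monomial initial form whose contributions from distinct $\Theta_v$ cannot collapse. Concretely: for $f = \sum_{v \in S} a_v \Theta_v$ with $a_v \neq 0$ and $S$ finite, set $m(w) := \min_{v \in S} \Theta_v^{\trop}(w)$ and $S_w := \{v \in S : \Theta_v^{\trop}(w) = m(w)\}$; it suffices to produce a surjection from the valuation-$m(w)$ part of $f$ onto a nonzero element of the graded piece of the local ring at $w$.

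The key input is the broken line / atomic expansion intrinsic to the $\cite{KY22}$ construction: near a chamber $\sigma$ containing $w$ in its interior, each $\Theta_v$ is a finite sum $\sum_{\gamma} c(\gamma) z^{I(\gamma)}$ indexed by broken lines $\gamma$ with asymptotic direction $v$ ending in $\sigma$, and the valuation $\Theta_v^{\trop}(w)$ is achieved by the unique ``straight'' broken line, whose exponent $I(\gamma)$ in the local torus chart depends injectively on $v$. This pointedness, together with the Symmetry conjecture, implies that for $w$ in the interior of a chamber the initial forms $\{\ini_w(\Theta_v)\}_{v \in S_w}$ are distinct torus monomials, hence $k$-linearly independent; therefore $\ini_w(f)$ is nonzero and $f^{\trop}(w) = m(w)$.

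For $w$ on a wall, I would argue by a perturbation/continuity: the locus where the wished-for equality fails is closed in $\Sk(V)$ and cut out by the vanishing of the initial form of $f$ in a stratum-by-stratum sense; if it were nonempty it would meet the interior of some chamber (after subdividing the scattering diagram to make $f$ a theta-polynomial compatible with it), contradicting the chamber case. Alternatively, for $w$ a $\bbQ$-point one can use a limit of generic $\bbR$-directions approaching $w$ and semicontinuity of $\val_w$.

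The main obstacle is uniform pointedness: establishing in step two that the map $v \mapsto I(\gamma_{v,w})$ from $S_w$ to local monomials is injective requires control of the scattering diagram across walls and chambers. In dimension two this should be tractable from the explicit wall-crossing description used throughout the paper, reducing Independence to a finite combinatorial check on the broken-line initial data once the Double Mirror and Symmetry conjectures have been established; the nonnegative-integer (counting) nature of the structure constants in \cref{rem:structure} is precisely the algebraic shadow of this pointedness and should make the argument essentially bookkeeping after those two conjectures are in hand.
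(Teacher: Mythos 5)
The statement you are attempting to prove is presented in the paper as a \emph{conjecture}, not a theorem, and the paper itself does not supply a proof. Instead it invokes the external preprint \cite{CMMM} (Mandel), which proves Independence and Symmetry for Fock--Goncharov dual cluster varieties, and then observes (via \cite[4.4]{BGCA} and \cite[\S 3]{TravisCoxCluster}) that the pair $(G,V)$ is of that form. The only argument in the paper that touches Independence is the proposition in \cref{sec:prod}, which \emph{propagates} the three basic conjectures from $U$ to $U\times\bbG_m$; it is a reduction, not a proof. So there is no in-paper proof for your proposal to be compared against, and a self-contained broken-line argument, if correct, would genuinely be new content relative to this manuscript.

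Your sketch, however, has a real gap, and you flag it yourself: ``uniform pointedness,'' i.e.\ that the initial form of each $\Theta_v$ at $w$ is a single torus monomial $z^{m_v}$ with $v\mapsto m_v$ injective. For a generic valuation $w$ in the interior of a chamber this is believable (the no-bend broken line gives the minimal exponent, and positivity of the wall-crossing data rules out cancellation), but establishing that the no-bend term \emph{strictly} dominates requires convexity of the scattering exponents relative to $w$, which is exactly the nontrivial input that \cite{CMMM} supplies. You also appeal to the Symmetry conjecture in deducing that the initial forms are monomials, but Symmetry is a statement about equality of numbers $\theta_u^{\trop}(v)=\theta_v^{\trop}(u)$ and does not obviously control initial forms; I do not see what work it is doing in that step, and since Symmetry and Independence are proved together in \cite{CMMM}, using one as input to the other risks circularity unless the logical order is made explicit. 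Finally, the wall/perturbation step is undersupported: the failure locus need not meet any chamber interior simply because it is closed, and the claim that one can ``subdivide the scattering diagram to make $f$ a theta-polynomial compatible with it'' is not justified. A cleaner route there would be to observe that both $f^{\trop}$ and $\min_v\Theta_v^{\trop}$ are continuous piecewise-linear functions on $\Sk(V)$ that agree on a dense open subset, hence agree everywhere --- but that still leaves the dense-open-set case resting on the unproven pointedness. In short: the reduction to pointedness is sensible and the overall strategy resembles what Mandel does, but as written the crucial step is not proved, and the honest conclusion is that your proposal reproduces the statement the paper outsources rather than supplying a proof of it.
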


The recent preprint \cite{CMMM} proves a version of Symmetry and
Independence in the case of Fock-Goncharov mirror cluster varieties. Our $G$
and $V$ are such a pair by
\cite[4.4]{BGCA} and \cite[\S 3]{TravisCoxCluster}, and Independence and
  Symmetry, in the form we will need to prove \cref{thm:modular}, follow
  from \cite{CMMM}. So the main issue for us will be the double mirror
  conjecture, see \cref{cor:dm}.

\begin{remark}[Coefficients] \label{rem:coeff} Throughout there is the issue of coefficients:
    For a compactification $V \subset \oV$ (of an affine log CY $V$ with
    maximal boundary which
    admits a small resolution) the mirror algebra, $A_{V \subset \oV}$ is a free
    $\bbZ[\NE(\oV,\bbZ)] \eqqcolon R_{\oV}$ algebra,
    with basis $\Sk(V)$. Different
    compactifications give algebras with the same bases, related (essentially)
    by base extension. More precisely, when $V \subset \oV$ is snc, the
    $\Spec(A_{V \subset \oV}) \to \Spec(R_{\oV}) = \TV(\Nef(\oV))$ (note the
    target is an affine $T_{\Pic(\oV)}$-toric variety) is equivariant
    for the subtorus $T_{<D>} \subset T_{\Pic(\oV)}$, where
    $<D> \subset \Pic(\oV)$ indicates the subgroup generated by
    irreducible components of the boundary $\oV \setminus V$ (each
    theta function is an eigenfunction for $T_{<D>}$). 
    When we
    replace $\oV$ by $\oV' \to \oV$ a birational map which is an
    isomorphism over $V$, then, after restricting the structure tori
    of the base, the mirrors change by base extension:
    $$
    A_{V \subset \oV'} \otimes \bbZ[A_1(\oV',\bbZ)] =
    A_{V \subset \oV} \bigotimes_{R_{\oV}} \bbZ[A_1(\oV',\bbZ)].
    $$

    In particular, the set of fibres of the mirror family (restricted to the
    structure tori) are independent of the compactification (and the
    theta functions are independent, up to individual scaling, determined by
    the action of $T_{<D>}$). There are two versions of the mirror that
    eliminate the dependence on compactification, the {\it absolute mirror}
    \cite[1.6]{KY22}, and the {\it canonical mirror}, \cref{sec:canmir}.

    Here, to simplify notation, we will sometimes write e.g. $A_{\Sk(V)}$ and
    talk about the mirror {\it up to coefficients}. It is only when we
    are dealing with modularity questions that the precise choice of coefficients
    will matter, which we indicate with further decoration, e.g.
    $A_{\Sk(V),L_X}$ in \cref{const:mirror}.
    \end{remark}

Now given partial minimal model
$G \subset \cT$, with divisorial boundary $E \coloneqq \cT \setminus G$,
we consider $V := G^{\vee}$ (the mirror) and the {\it convex cone} $C \subset \Sk(V)$, given by $\theta_{Z \in \Sk(G)}^{\trop} \geq 0$ for all irreducible components
$Z \subset E$.

\begin{remark} We use italics in {\it convex cone} as in \cite{GHKK}. $C$ lives
  in $\Sk(V)$ which is a piecewise integer affine space, with no canonical
  affine structure, so the ordinary notion of convex does not make sense. 
   A basic idea from \cite{GHKK}, expanded in
  \cite{KY20}, \cite{KY22} is that intrinsic skeleta, e.g. $\Sk(V)$,  come with
  canonical paths -- the intrinsic spines of punctured Berkovich disks, which
  we can treat as straight in order to generalize many convex geometric notions; we
  use italics to indicate such generalisations. 
    If
  $G = T_N$, the algebraic torus with co-character lattice $N$, then 
  $\cT$ is a toric variety, $\Sk(V)(\bbZ) = N^*$, and
  $\bbZ[C] = \cO(\cT)$, and the spines are straight in the usual sense.
  $C$ in this case is the associated  rational polyhedral cone.
  
  \end{remark}

  \begin{notation} \label{nota:subscript}  As we will be using various 
    mirror algebras simultaneously, e.g. theta functions on both
    $G$ and its mirror $V$, we add further decoration to avoid confused,
    writing e.g. 
    $\theta_{Z \in \Sk(G)} \in A_{\Sk(G)(\bbZ)}$, where $A$ always means a
    subring of the mirror algebra and
    the subscript on $A$ indicates the subset of the skeleton we take as basis.
     \end{notation}

By the basic convexity, \cite[15.8]{KY20},  
the free submodule $A_C \subset A_{\Sk(V)}$  with this basis is a subalgebra. 
The three conjectures would imply this is (up to coefficients)
$\cO(\cT) \subset \cO(U)$. For the short proof of this
implication, see \cite[24.6]{KY22}.  

\begin{remark}
  The independence conjecture implies $C$ is equivalently defined 
  by the  single inequality using the superpotential, $(\sum \theta_Z)^{\trop} \geq 0$. 
\end{remark}

\begin{construction}[The boundary] \label{const:boundary}
The general philosophy also describes the ideal of the boundary
$\cT \setminus G \eqqcolon E$: Each irreducible $Z \subset E$ gives
a {\it face}  $Z^{\perp} \subset C$, the locus where (in addition to the inequalities
defining $C$), $\theta_{Z \in G}^{\trop} = 0$.  Warning: the faces ARE NOT in general
{\it convex}, e.g. the submodule $A_{Z^{\perp} \subset C} \subset A_C$ will not
in general be a subalgebra. 
But IT IS the basis of an algebra: By the basic convexity, the
complement $[Z > 0] \subset C$ is the basis of an ideal of $A_C$, and so
$Z^{\perp} \cap C$ is the basis of the algebra $A_C/[Z > 0]$. The basic
conjectures imply that $[Z > 0] \subset A_C = \cO(\cT)$ is the ideal of
functions vanishing on $Z$.

\begin{remark} \label{rem:convexcones} It is not important above that
we used theta functions. For any regular $f$ on affine log CY $V$ (with
maximal boundary), the {\it cone}  $f^{\trop} \geq 0 \subset \Sk(V)$ is the basis of a subalgebra,
and $[f^{\trop} > 0] \subset [f^{\trop} \geq 0]$ is the basis of an ideal,
and the face $f^{\trop} = 0$ is the basis of the algebra  
$A_{[f^{\trop} \geq 0]}/[f^{\trop} > 0]$. There is a similar construction
for a set of functions (the construction we use above for the
$\theta_{Z \in \Sk(G)}$, $Z \subset E$). 

The 
    structure constants for $A_C/[Z > 0]$ also count disks: this
    quotient is the {\it mirror algebra} for the generic fibre of the
    function, see \cite[24.7]{KY22}. Italics because this fibre does not have
    to be log CY. However it inherits a volume form from the log CY, so
    a skeleton, and counts of disks give structure constants for a (a priori
    not necessarily associative) algebra, and by \cite[24.7]{KY22} this
    algebra is exactly $A/I$ (so in particular associative). 
    We expect that all strata in partial minimal models, and all log
    canonical centers come by a version of
    this construction. This 
    would then imply the Independence Conjecture in \cref{rem:conjs}.
    The description of the quotient as mirror algebra for the 
general fibre will give one of the two basic inductions  we will use to prove
\cref{thm:modular}, see \cref{rem:inductions}.
\end{remark}
\end{construction}

\subsection{dual cones} \label{sec:dcs}
  In toric geometry we have convex cones and their duals. We
  generalize this as follows: Suppose we have $G \subset W$ a partial minimal
  model with $W$ proper over affine. We have an associated analytic domain
  $W^{\beth} \subset G^{\an}$, these are the valuations with center in $W$, or
  equivalently given by $g^{\trop}(v) \geq 0$ for all $g \in \cO(W)$. We note this
  is the Berkovich generic fibre, for $W$ over $k$ with trivial valuation.
  $W^{\beth} \cap \Sk(G)$ is then {\it a convex cone},
  meaning the vector subspace $A_{W^{\beth} \cap \Sk(G)} \subset A_G$ is a subalgebra.
  And $W^{\beth}$ itself is convex in that structure constants for $A_{W^{\beth} \cap \Sk(G)}$
  can be computed with disks in $W^{\beth}$, see \cite[\S 3]{HKY22}.
  Now we have a dual construction, we can consider the mirror
  $V = \Spec(A_G)$. Each irreducible component $Z \subset W \setminus G$ gives
  $Z \in \Sk(G)$ and so $\theta_{Z \in \Sk(G)} \in \cO(V) = A_{\Sk(G)}$. Then we
  can take $C \subset \Sk(V)$ given by $\theta_Z^{\trop} \geq 0$. This is again
  convex, giving subalgebra $A_C \subset A_{\Sk(V)}$. This gives $V^{\vee} \to
  \Spec(A_C)$, which conjecturally is (up to coefficients) $G \subset W$.

\subsection{torus action in the general philosophy} \label{sec:ta}
When a torus $T_A$ acts on $G$ (in our application $G$ is a principal
$T_{A_1(X,\bbZ)}$ bundle) the mirror inherits, by \cite{LoganThesis},
a map
$V \to T_P$ to the dual torus (in our application $T_{\Pic(X)}$). This in
turn, by the basic convexity \cite[15.8]{KY20},
induces a $T_A$ action on the double mirror $V^{\vee}$, or
equivalently, a $P$-grading on $A_{\Sk(V)}$:

The tropicalisation of $V \to T_p$ is $w: \Sk(V) \to P$, 
\begin{definition} \label{def:skVweight}
      For $v \in \Sk(V)$, the weight $w(v): A \to \bbZ$ is
      $C \to (z^C)^{\trop}(v)$, for $W \in A$, where $z^W: T_P \to \bbG_m$
      is the corresponding character.
    \end{definition}
By the basic convexity , \cite[15.8]{KY20}, these are the weights for a $P$-grading 
on $A_{\Sk(V)}$ (so that the $L$-homogenous part has basis $w^{-1}(L)$), with
$A_C \subset A_{\Sk(V)}$ a graded subalgebra. Now we can consider the GIT
quotients for $T_A$-acting on $\Spec(A_{\Sk(V)})$, for the linearization of
the trivial bundle given by each character $L \in P$. By the basic
convexity, \cite[24.7]{KY22}, the coordinate ring (i.e. the direct sum of weight spaces for multiples
of $L$) is the mirror to the restriction of $V \to T_{\Pic}$ to a general
orbit of the one parameter subgroup of $T_P$ corresponding to $L$. 

 \begin{notation} \label{nota:GIT} As we will not vary the torus by which we
        quotient, but frequently vary the linearization,  we adopt a simplified notation:
        For $L \in \Pic(X)$, we will write
        $A_C//L$ for the GIT quotient $\Spec(A_C)^{\mss}(L)//T_{A_1(X,\bbZ)}$. 
        \end{notation} 

  We now employ this philosophy to prove \cref{thm:modular}.

        \section{Strategy of the proof}
         Our goal is, roughly, to prove that the fibres of the
        double mirror         
        $A_C \subset A_{\Sk(V)}$ give the universal deformation
        of $\Cox(X) \subset \cO(G)$. We see $X$ (and its deformations) then
        as GIT quotients. General properties of the mirror construction
        will guarantee these GIT quotients are families of Looijenga pairs,
        with at worst du Val singularities off the boundary (and, we will show,
        smooth near the boundary).  See \cref{prop:lpsings}.
        A basic
        question is how we know we are getting {\it the right} Looijenga pairs,
        e.g. pairs deformation equivalent to $(X,E)$. 
        For the interiors we get a strong statement nearly for free from
        \cite{LZ23}, see \cref{sec:pp}, \cref{cor:versal}.
        So the interior are {\it right}. How do we know we
        are getting the right compactification? For this we use the philosophy
        of \cite{HK96}: variation of GIT quotient (VGIT) applied to the
        $T_{A_1(X,\bbZ)}$ action on the Cox ring recovers all of the birational
        contractions of $X$. In particular it knows about any toric model.
        So we take a convenient toric model for $X$. This canonically
        induces, through the mirror and VGIT
        formalism, corresponding birational maps of GIT quotients, which
        we check have the correct deformation type. See \cref{cor:tm},
        \cref{prop:induct}. 
        Connecting this with what
        we already know about the interiors shows we have the right family
        of Looijenga pairs, see \cref{prop:LZ} and \cref{thm:mpp}.

        The key point in the construction is the following: 
        Note if we have a contraction $X \to \oX$ (we will
        mainly be interested in the blowdown of some configuration of disjoint
        internal $-1$ curves), then naturally $\Cox(\oX)$ is a subring of
        $\Cox(X)$. So we expect that the subring of $A_C$ for $T_{A_1(X,\bbZ)}$-weights
        in $p^*(\Pic(\oX)) \subset \Pic(X)$ is the corresponding ring
        $A_C^{\oX}$ (obtained by the same construction with $X$ replaced by $\oX$).
        So we expect we can obtain this subring by induction on the charge (see \cref{ss:ind}).    This is indeed the case:
        By the basic formalism this subring is determined by counts
        of disks in an analytic domain determined by the weights, roughly
        the 
        closure of a general $T_{p^*{\Pic(\oX)}}$ orbit. For the precise
          statement see \cref{prop:bbY}. The
        basic fact, proven in \cite{GHKIv2}, is that this formal deformation
        is isomorphic to the GHKI family for $\oX$ (restricted to the
        formal neighborhood of $0 \in \TV(\Nef(\oX))$ ). See  \cref{prop:eqdef}.

 \section{Induction on the charge} \label{ss:ind}
        Now we turn to the proof of \cref{thm:modular}. 
  
        We induct on the {\it charge}
        of $U$, the relative Picard number of a toric model for any smooth
        Looijenga pair with $U = X \setminus E$. 

          For charge at most one $\Pic(U)$ is
        trivial. In this case \cref{thm:modular} is 
        quite simple, and we treat it by a special argument
        in \cref{sec:prod}. See \cref{cor:crtp}

        Formal properties of the construction show that we are free to
        replace $X$ by a toric blowup (composition of blowups at nodes of
        the boundary), see \cref{prop:tbred}. The following is easy:

        \begin{lemma} \label{lem:ch-1} Let $(X,E)$ be a Looijenga pair with charge    at
          least two. Then there is a toric blowup $X' \to X$, which has
          two disjoint internal $-1$ curves, meeting different irreducible
          components of $E' \subset X'$. These can be simultaneous
          blowndown, to a smooth Looijenga pair.
        \end{lemma}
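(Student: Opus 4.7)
The plan is a case analysis on how the internal $(-1)$-curves of $X$ are distributed among the irreducible components of $E$. The key preliminary observation is that the standing affine interior hypothesis forces any factorisation of a toric model $\pi: X \to Y$ into a sequence of point blowups to be ``non-nested'': no blowup center may lie on a previously created internal exceptional curve, since otherwise the strict transform of that exceptional would become a $(-2)$-curve with zero intersection against every component of $E$, giving a complete curve inside the affine variety $U = X \setminus E$, a contradiction. Hence $\pi$ contracts exactly $c = \mathrm{charge}(U) \geq 2$ pairwise disjoint internal $(-1)$-curves $F_1, \dots, F_c \subset X$, each meeting a single component of $E$ transversally.

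If two of the $F_j$'s meet different components of $E$, I take $X' = X$: the two disjoint $(-1)$-curves contract simultaneously to smooth points while leaving the boundary cycle intact, giving a smooth Looijenga pair. The substantive case is when all $F_j$'s meet one fixed component $D \subset E$. Here I produce a toric blowup $\sigma: X' \to X$ at a boundary node $n = D_a \cap D_b$ with $D \notin \{D_a, D_b\}$ (which exists because $E$ is a cycle with at least three components), chosen so that some irreducible 0-curve $C \subset X$ passes through $n$ and is disjoint from at least one $F_j$. The strict transform $C' = \sigma^{-1}_* C$ then satisfies $(C')^2 = -1$, is internal, and meets only the new boundary component $F = \sigma^{-1}(n)$; together with such an $F_j$ the pair $(C', F_j)$ consists of two disjoint internal $(-1)$-curves on $X'$ meeting the distinct components $F$ and $D$ of $E'$, and their simultaneous contraction yields the desired smooth Looijenga pair (the exceptional adjunction $-K_{X''} = \sigma_*(E' - C' - F_j)$ is the boundary cycle of the contracted surface).

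The hard part is the existence of a suitable 0-curve $C$. For concrete toric models this can be made explicit: when $Y = \bbP^2$ with boundary lines $L_1, L_2, L_3$ and $D$ is the strict transform of $L_1$, take $C$ to be the strict transform of the line through a non-toric blowup center $p_i \in L_1$ and the node $L_2 \cap L_3$ --- a 0-curve on $X$ meeting $F_i$ alone among the $F_j$'s for generic positions of the blowup centers, hence disjoint from any other $F_j$. For $Y = \bbF_r$ an analogous construction uses fibers of the ruling through appropriate nodes. In general the argument exploits the richness of $\Pic(X)$ from charge $\geq 2$ together with the ampleness of $E$ (from affineness), which ensure that the linear system of 0-curves through any given node has enough freedom to avoid a prescribed $F_j$; if a single toric blowup does not suffice, one iterates the construction, using fresh boundary nodes and 0-curve classes created on $X'$.
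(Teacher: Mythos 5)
The paper offers no proof of this lemma (it just says ``The following is easy:''), so there is nothing to compare against; I will assess your argument on its own.

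Your case analysis is the right shape, and Case 1 is fine. The problem is Case 2. You need an irreducible curve $C$ on $X$ with $C^2 = 0$, passing through a chosen node $n = D_a \cap D_b$ (with $D \notin \{D_a,D_b\}$), meeting $E$ \emph{only} at $n$, and disjoint from at least one $F_j$. You construct such a $C$ explicitly when $\bar X = \bbP^2$, assert an analogue for $\bbF_r$, and for a general toric model you write that the ``richness of $\Pic(X)$ \dots\ ensure[s] that the linear system of 0-curves through any given node has enough freedom to avoid a prescribed $F_j$; if a single toric blowup does not suffice, one iterates the construction.'' That is not a proof. Note how strong the constraints on $C$ actually are: by adjunction $C\cdot E = 2$ with all of it concentrated at $n$, so the image $\bar C$ in $\bar X$ must meet $\bar D_a, \bar D_b$ transversally at $\bar n$ and nowhere else, must avoid every other boundary component except possibly $\bar D$, and must meet $\bar D$ \emph{only} inside the blowup locus $\{p_1,\dots,p_c\}$ (and not at all of it). For a general smooth projective toric surface $\bar X$ there is no a priori reason such a curve class exists, and what extra toric blowups would create one is exactly the content that is missing. (A secondary imprecision: even in the $\bbP^2$ case you invoke ``generic positions of the blowup centers,'' but the lemma must hold for all positions. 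This one happens to be harmless --- since $p_1, p_2 \in L_1$, the line through $p_1$ and $L_2\cap L_3$ can never pass through $p_2$ --- but you should not need to appeal to genericity.)

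One further remark: your opening observation, that the affine-interior hypothesis forces the toric model to be non-nested, is correct and would suffice in the paper's setting. But the lemma as stated does not assume $U$ affine; if you want the lemma in that generality, the cleaner move is the standard one --- pass to a toric blowup on which the toric model becomes non-nested --- rather than deriving non-nestedness from affineness. Either way, this is a side issue; the gap that matters is the unproven existence of the auxiliary curve $C$ for general $\bar X$.
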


        Here {\it internal} means a smooth rational curve (necessarily
        of self intersection $-1$) meeting the boundary transversely in
        a single point. Equivalently, a $-1$ curve which is not an
        irreducible component of the boundary.

        Similarly:

        \begin{lemma} \label{lem:gen} Let $(X,E)$ be a smooth Looijenga pair. $(X,E)$ has a toric
            blowup whose Picard group is generated by the pullbacks of the Picard
            groups of each toric model.
          \end{lemma}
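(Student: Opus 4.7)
The plan is to fix one toric model, use it to identify a generating set of $\Pic(X)$, and then --- for each generator not yet captured by pullbacks --- produce, after a further toric blowup, an alternative toric model that captures it.

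Fix a toric model $\pi_0 \colon X \to \overline{X}_0$ contracting disjoint internal $(-1)$-curves $C_1, \ldots, C_r$. Since $\overline{X}_0$ is toric, $\Pic(\overline{X}_0)$ is generated by its boundary divisor classes, and their pullbacks to $X$ take the form $\pi_0^*[\overline{E}_{0,j}] = [E_j] + \sum_{p_i \in \overline{E}_{0,j}} [C_i]$, where $p_i = \pi_0(C_i)$. These pullbacks together with the classes $[C_i]$ generate $\Pic(X) = \pi_0^*\Pic(\overline{X}_0) \oplus \bigoplus_i \bbZ[C_i]$. It therefore suffices to produce, for each $i$, a toric blowup $X' \to X$ and a further toric model $\pi_i \colon X' \to \overline{X}_i$ whose pullback, combined with a lift of $\pi_0$, solves for $[C_i]$ in $\sum_\alpha \pi_\alpha^*\Pic(\overline{X}_\alpha)$.

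For each $i$, I would apply \cref{lem:ch-1} (iteratively if necessary) to obtain a toric blowup $\mu_i \colon X^{(i)} \to X$ together with a pair of disjoint internal $(-1)$-curves meeting distinct boundary components, with the proper transform of $C_i$ among them. Simultaneously contracting this pair yields an alternative toric model $\pi_i \colon X^{(i)} \to \overline{X}_i$. The pullback $\pi_i^*$ of the boundary component of $\overline{X}_i$ containing $\pi_i(C_i)$ involves $[C_i]$ with a coefficient pattern that differs from the $\pi_0$-relation; the difference of the two relations expresses $[C_i]$ in $\pi_0^*\Pic(\overline{X}_0) + \pi_i^*\Pic(\overline{X}_i)$ modulo classes of other internal exceptional curves, which are then treated by the same construction applied in turn. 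Taking $X'$ to be a common toric blowup of $X$ dominating all the $X^{(i)}$ --- itself a toric blowup of $X$ --- produces the required variety, with $\Pic(X')$ generated by the pullbacks $\pi_0^*\Pic(\overline{X}_0), \pi_1^*\Pic(\overline{X}_1), \ldots, \pi_r^*\Pic(\overline{X}_r)$.

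The main obstacle is verifying that these alternative toric models yield pullback relations that are genuinely independent of those from $\pi_0$, so that the resulting integer linear system actually solves for all the $[C_i]$ rather than producing a dependent system. A subtle point is that each application of \cref{lem:ch-1} may introduce new exceptional classes that must also be captured; this is handled by a simultaneous induction on the number of uncaptured classes, ordering the toric blowups so that each step strictly decreases this count. The flexibility in choosing which boundary node to blow up at each stage of \cref{lem:ch-1} should provide enough freedom to secure both independence and termination.
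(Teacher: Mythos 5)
Your core idea---use Lemma \ref{lem:ch-1} to create disjoint internal $(-1)$-curves and exploit disjointness to capture exceptional classes via multiple toric models---is in the right spirit, but the execution has a genuine gap and a conceptual error.

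The concrete error: you assert that simultaneously contracting a pair of disjoint internal $(-1)$-curves produced by \cref{lem:ch-1} ``yields an alternative toric model $\pi_i\colon X^{(i)}\to \oX_i$.'' That is false in general: contracting two disjoint internal $(-1)$-curves from a Looijenga pair of charge $c$ yields a pair of charge $c-2$, which is toric only when $c=2$. Once $\oX_i$ is not toric, the subsequent discussion of ``the boundary component of $\oX_i$ containing $\pi_i(C_i)$'' and the ``coefficient pattern'' of $\pi_i^*$ relative to $\pi_0^*$ has no meaning, and the linear-algebra step you need (``solving for $[C_i]$'') is not set up. A secondary problem is that \cref{lem:ch-1} gives you \emph{some} pair of disjoint internal $(-1)$-curves after \emph{some} toric blowup; it does not let you prescribe that the proper transform of a fixed $C_i$ be one of them, so the ``with the proper transform of $C_i$ among them'' assumption is unjustified. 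Finally, the appeal to ``simultaneous induction on the number of uncaptured classes,'' with termination secured by ``flexibility in choosing which boundary node to blow up,'' is speculative and does not identify a decreasing quantity.

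The paper's argument is simpler and avoids both problems. Rather than fixing one toric model and trying to solve for exceptional classes one at a time, first reduce to blowdowns of \emph{single} internal $(-1)$-curves: after the toric blowup from \cref{lem:ch-1} one has two disjoint internal $(-1)$-curves $Q_1,Q_2$ with blowdowns $p_i\colon X'\to X'_i$. Since $\Pic(X')=p_1^*\Pic(X'_1)\oplus\bbZ[Q_1]$ and, by disjointness, $[Q_1]=p_2^*[p_2(Q_1)]\in p_2^*\Pic(X'_2)$, one gets $\Pic(X')=p_1^*\Pic(X'_1)+p_2^*\Pic(X'_2)$ with no need to solve any system. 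Each $X'_i$ has strictly smaller charge, so induction applies; their toric blowups lift to a common toric blowup of $X'$ (blowing up boundary nodes commutes with contracting internal curves), and toric models of the $X'_i$ compose with $p_i$ to give toric models of this common blowup. That is the clean inductive structure your proof is groping toward but does not reach.
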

          \begin{proof} It is easy to see that $X$ has a toric blowup whose
            Picard group is generated by pullbacks of the Picard groups of
            all blowdowns of internal $-1$ curves , e.g. if $X$ has charge
            at least two, there is a toric blowup with two disjoint internal
            $-1$ curves, and the Picard groups of these two blowdowns suffice.
            A similar argument works for charge one. 
            Now proceed by induction on the charge.
            \end{proof} 

\subsection{toric blowup} 
        The next proposition shows that to prove the main theorem
        we are free to pass to toric blowups:

        Let $b:X' \to X$ be the blowup of a node. We have the analogous
        family version $\bbX' \to \bbX$ (blowing up this node on each fibre).
        Note we have canonical $A_1(X',\bbZ) = A_1(\bbX',\bbZ)$.

\begin{proposition} \label{prop:tbred} Notation as immediately above. 
          Let
          $T \subset T_{A_1(X',\bbZ)}$ be the subtorus corresponding
          to the $-1$ curve $[\ex(b)] \in A_1(X',\bbZ) = A_1(\bbX',\bbZ)$. 
          The following hold:
          \begin{enumerate}
          \item $\Cox(\bbX')^T = \Cox(\bbX) \otimes_{L_\bbX} L_{\bbX'}$.
          \item $(A_{C,L_{\bbX'}})^{T}= A_{C,L_\bbX} \otimes_{L_\bbX} L_{\bbX'}$
          \end{enumerate}
          
          If \cref{thm:modular} holds for $X'$, it
          holds for $X$. 
        \end{proposition}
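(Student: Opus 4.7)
The plan is to verify (1) and (2) by computing $T$-invariants on each side, and then deduce the theorem for $X$ by taking $T$-invariants of the isomorphism provided by \cref{thm:modular} for $X'$. The subtorus $T \subset T_{A_1(X',\bbZ)}$ is $1$-dimensional, generated by the cocharacter $[\ex(b)]$, and $T_{A_1(X',\bbZ)}/T = T_{A_1(X,\bbZ)}$; dually, this is the inclusion $b^*\colon\Pic(X)\hookrightarrow\Pic(X')$ of character lattices, with quotient $\bbZ$ detecting intersection with $[\ex(b)]$.

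For (1), $T$ acts on the Picard-graded algebra $\Cox(\bbX')$ through the character $L\mapsto L\cdot[\ex(b)]$, so
\[
\Cox(\bbX')^T \;=\; \bigoplus_{L\,:\,L\cdot[\ex(b)]=0} H^0(\bbX',L).
\]
From $\Pic(X')=b^*\Pic(X)\oplus\bbZ[\ex(b)]$, combined with $b^*M\cdot[\ex(b)]=0$ and $[\ex(b)]^2=-1$, the condition $L\cdot[\ex(b)]=0$ cuts out exactly $L\in b^*\Pic(X)$. For $M\in\Pic(X)$, since $b_*\cO_{X'}=\cO_X$ for the toric blowup of a node, the projection formula gives $H^0(\bbX',b^*M)=H^0(\bbX,M)\otimes_{L_{\bbX}}L_{\bbX'}$ via flat base change along $T_{\Pic(X')}\twoheadrightarrow T_{\Pic(X)}$, and summing recovers $\Cox(\bbX)\otimes_{L_{\bbX}}L_{\bbX'}$.

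For (2), the analogous character computation gives
\[
(A_{C,L_{\bbX'}})^T \;=\; \bigoplus_{\substack{p\in C_{X'}(\bbZ)\\ w(p)\cdot[\ex(b)]=0}} L_{\bbX'}\cdot\theta_p,
\]
since $\theta_p$ has $\Pic(X')$-weight $w(p)$. The cone $C_{X'}$ differs from $C_X$ only by the added defining inequality $\theta_{\ex(b)\in\Sk(G')}^{\trop}\geq 0$. The essential input is the identity
\[
\theta_{\ex(b)\in\Sk(G')}^{\trop}(p) \;=\; w(p)\cdot[\ex(b)],
\]
which encodes that $\theta_{\ex(b)}$ is an eigenfunction of the $T_{A_1(X',\bbZ)}$-action on $\cO(V)$ with weight $[\ex(b)]\in\Pic(X')$, and is a manifestation of the Symmetry Conjecture, available through \cite{CMMM} in the cluster setting. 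Granting this identity, the extra inequality defining $C_{X'}$ collapses to the equation $\theta_{\ex(b)}^{\trop}(p)=0$ on the $T$-invariant locus, and this becomes automatic once $w(p)\in b^*\Pic(X)$. The $T$-invariant indexing set is therefore $\{p\in C_X(\bbZ) : w(p)\in b^*\Pic(X)\}$, which is precisely the basis of $A_{C,L_{\bbX}}\otimes_{L_{\bbX}}L_{\bbX'}$; multiplication descends since products of $T$-invariants are $T$-invariant.

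The main technical obstacle is the identity $\theta_{\ex(b)}^{\trop}(p)=w(p)\cdot[\ex(b)]$, which requires a direct analysis of how a boundary theta function interacts with the $\Pic(X')$-weight grading inherited from the $T_{A_1(X',\bbZ)}$-torsor structure on $G'$. Once (1) and (2) are established, the final implication is formal: \cref{thm:modular} for $X'$ furnishes an isomorphism $\Cox(\bbX')\cong A_{C,L_{\bbX'}}$ of $\Pic(X')$-graded $L_{\bbX'}$-algebras, compatible with structure constants and with the modular $\ADM$- and $\bbG_m^E$-actions. Passing to $T$-invariants and applying (1) and (2) yields $\Cox(\bbX)\otimes_{L_{\bbX}}L_{\bbX'}\cong A_{C,L_{\bbX}}\otimes_{L_{\bbX}}L_{\bbX'}$, and since $L_{\bbX}\hookrightarrow L_{\bbX'}=L_{\bbX}[z^{\pm[\ex(b)]}]$ is a faithfully flat Laurent polynomial extension, this descends to the required isomorphism $\Cox(\bbX)\cong A_{C,L_{\bbX}}$, with the theta basis, structure constants and modular equivariance all descending along.
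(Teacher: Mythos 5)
Part (1) of your argument matches the paper's (projection formula plus the $\Pic$ decomposition). Part (2), however, follows a genuinely different route, and it does not work.

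Your key identity
\[
\theta_{\ex(b)\in\Sk(G')}^{\trop}(p) \;=\; w(p)\cdot[\ex(b)]
\]
is false. Note first that for $p\in C_{X'}$ one has, by definition of $C_{X'}$, that $\theta_{\ex(b)}^{\trop}(p)\geq 0$. On the other hand, $[\ex(b)]$ is an effective class (so $P_{[\ex(b)]}\neq\emptyset$ in the notation of \cref{cor:poly}), hence there is a $p\in C_{X'}(\bbZ)$ with $w(p)=[\ex(b)]$; for that $p$ the right side is $[\ex(b)]^2=-1<0$. So the two sides cannot agree. More conceptually, you write that the identity "encodes that $\theta_{\ex(b)}$ is an eigenfunction of the $T_{A_1(X',\bbZ)}$-action on $\cO(V)$," but this mixes up the two mirror levels: $\theta_{\ex(b)\in\Sk(G')}$ lives in $\cO(V')=A_{\Sk(G')}$, while the $\Pic(X')$-grading in play lives on $A_{\Sk(V')}=\cO((V')^{\vee})$ (\cref{sec:ta}). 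Neither $\cO(V')$ nor its tropicalization carries the grading you are invoking, and \cite{CMMM}'s Symmetry, stated as in \cref{prop:ndsym}, relates $\theta_{v\in\Sk(V)}^{\trop}(Z\in\Sk(\bbG))$ to $\theta_{Z\in\Sk(G)}^{\trop}(v)$ -- it does not produce this kind of linear expression in the weight.

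Even setting the identity aside, there is a structural gap: you are trying to reduce (2) to a bijection of indexing sets, but $C_{X'}$ and $C_X$ lie in the skeleta of two different log CY varieties ($V'$ mirror to $G'$, of dimension one larger than $V$ mirror to $G$), so there is no a priori map that would let you compare them, and even if there were, a basis bijection alone would not give an isomorphism of algebras over $L_{\bbX'}$: you also need the structure constants (disk counts in $V'$ versus $V$) to match. The paper's proof of (2) handles both issues at once: the basic convexity \cite[20.1]{KY22} localizes the relevant disk counts to the restriction of $V'\to T_{\Pic(X')}$ over the subtorus $T_{\Pic(X)}=\ker z^{\ex(b)}$; this restriction is identified with the GHKI mirror family for $(X,E)$ via \cite[\S17]{KY20} (or via \cite{LZ23} and the universal family); and finally \cref{prop:cq} supplies the passage between compactifications $\bbX'|_{T_{\Pic(X)}}\to\bbX$ and hence the coefficient identification. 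Some geometric input of this kind is unavoidable and is absent from your argument. The final implication (deducing the theorem for $X$ from (1) and (2) and the $X'$-case) is fine once (2) is established.
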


        \begin{proof}  (1) is immediate
          from the definitions, and the projection formula. 

          For (2): By the basic convexity \cite[20.1]{KY22}, $(A_C^{X'})^T$
          (with whatever coefficients) is computed using disks in the restriction
          of $V \to T_{\Pic(X')}$ to the kernel of $z^{\ex(b)}$, ie the subtorus
          $T_{\Pic(X)} \subset T_{\Pic(X')}$. By \cite[\S 17]{KY20}, this is the GHKI
          mirror family for $(X,E)$ (or alternatively, we can deduce this
          restriction statement from the \cite{LZ23}
          identification of the GHKI mirror with the universal family of
          \cite[4.24]{MLPv1} -- for the universal family the restriction statement
          is easy). There is a natural $\bbX'|_{T_{\Pic(X)}} \to \bbX$, which on
          each fibre is the blowup of the analogous node. Now (2) follows from
          \cref{prop:cq}. 

          The implication in the final
          paragraph follow easily from(1-2).  
          
        \end{proof}

\section{log canonical centers of the GHKI mirror family}
         
\begin{proposition} \label{prop:fibsing} We consider the restriction $\ocY \to T_{\Pic(X)}$ of
  the GHKI family. The following hold:
 
  The total space of the restriction to the orbit of any subtorus has
  $K$-trivial Gorenstein canonical singularities. For fixed subtorus, and
  generic orbit, the total space admits a small resolution. $\ocY$ itself
  admits a small resolution.
\end{proposition}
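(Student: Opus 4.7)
The plan is to leverage the identification (via \cite{LZ23}) of the GHKI family restricted to $T_{\Pic(X)}$ with the universal family $\obbX \to T_{\Pic(X)}$ of \cite[4.24]{MLPv1}, and then use the quniversal smooth models $\bbX \to \obbX$ of \cref{sec:quf} to produce explicit small resolutions. Under this identification, $\ocY$ is (the relevant restriction of) $\obbX \setminus \bbE$, whose fibers are the affine log CY interiors $U_t = \oX_t \setminus E_t$ of Looijenga pairs deformation equivalent to $(X,E)$, with at worst du Val singularities.

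For the first assertion, each fiber has $K_{U_t} \sim 0$ and Gorenstein canonical (du Val) singularities. Relative $K$-triviality gives $K_{\ocY/T_{\Pic(X)}} \sim 0$, and since $T_{\Pic(X)}$ is a torus, $K_{\ocY} \sim 0$. The Gorenstein canonical claim for the total space follows from flatness over the smooth base together with the fiberwise Gorenstein canonical condition, by the standard deformation theory of du Val singularities (whose versal deformations are explicit hypersurfaces in $\bbA^3$ over polynomial bases, from which the Gorenstein canonical property of the total space is manifest). The restriction to any orbit $t \cdot T' \subset T_{\Pic(X)}$ of any subtorus $T'$ inherits both properties, since the orbit is a smooth $K$-trivial subtorus and restriction preserves flatness and fiberwise singularities.

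For the small resolution claims, the quniversal family $\bbX$ is smooth and the canonical contraction $\bbX \to \obbX$ of \cite[4.20]{MLPv1} contracts, fiber by fiber, precisely the complete internal curves. By \cite[4.14]{MLPv1}, such complete internal curves occur only over a finite union of root hyperplanes in $T_{\Pic(X)}$, which is a proper closed subset of pure codimension $1$. The exceptional locus of $\bbX \to \obbX$ thus fibers over a codimension-$1$ locus with $1$-dimensional fibers, giving codimension $\geq 2$ in $\bbX$; this shows $\bbX \to \obbX = \ocY$ is a small resolution. For fixed $T'$ and a generic orbit $t \cdot T'$, the orbit meets each root hyperplane properly in codimension $1$, so the restriction $\bbX|_{t \cdot T'} \to \ocY|_{t \cdot T'}$ remains small. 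Genericity excludes the degenerate case in which $t \cdot T'$ lies entirely within a root hyperplane, where the exceptional locus of the restriction would have codimension $1$ rather than $2$.

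The main obstacle is establishing the codimension-$2$ bound on the exceptional locus of $\bbX \to \obbX$, which rests on the root-hyperplane description of $\MovFan(\bbX)$ from \cite[4.14]{MLPv1}; in particular, one needs to know that the locus of parameters admitting complete internal curves is a proper closed subvariety of the base, not all of $T_{\Pic(X)}$, which is where affineness of the generic interior enters. The remaining claims follow formally from the modular identification \cite{LZ23} together with standard deformation-theoretic facts about du Val singularities over smooth bases.
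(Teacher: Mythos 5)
Your proof is correct and follows the same route as the paper: identify $\ocY$ with (the interior of) the universal family via \cite{LZ23}, use a quniversal family as the smooth model, and read off the singularity and smallness claims from the explicit blowup/blowdown construction of \cite{MLPv1}. The paper's own proof merely invokes "everything follows easily from the explicit blowup (and blowdown) construction"; you supply the details it leaves implicit, namely the flatness-plus-fiberwise-du-Val argument for Gorenstein canonical and the root-hyperplane codimension count for smallness.
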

\begin{proof} By \cite{LZ23} this is the universal family from \cite{MLP}.
  Then each quniversal family, see \cref{sec:quf}, gives a small resolution, and everything
  follows easily from the explicit blowup (and blowdown) construction of quniversal
  families in \cite{MLP}.
  \end{proof}

  \begin{proposition} \label{prop:dvres} Consider $p: X \to \oX$ the
    blowdown of a set of (necessarily disjoint)
    internal $-1$ curves (we allow here the empty set, ie $p$ can be the identity) and 
    $L = p^*(A)$ with $A$ ample.
    Let $\ocY \to \bbA^1_L$ be the restriction
    to an orbit closure for the associated one parameter subgroup. The
    following hold:
    \begin{enumerate}
    \item     The central fibre is the vertex $\VVert(X,E)$,
      see \cite[22.3]{KY22}.
      \item The pair $(\ocY,\VVert)$ is log canonical.
        \item Every log canonical center (see \cite[4.15]{Kollar_Singularities_of_the_minimal_model_program}) of
          the pair is a stratum of $\VVert$.
          \end{enumerate} 
        \end{proposition}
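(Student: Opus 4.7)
The plan is to prove all three parts simultaneously by transferring the analysis onto a small resolution of $\ocY$ coming from a quniversal family, where the geometry becomes explicit, and then descending the conclusions through the small birational map.

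First I would use \cref{prop:fibsing}: restrict a quniversal family $\bbX \to T_{\Pic(X)}$ to the orbit closure $\bbA^1_L \subset T_{\Pic(X)}$ of the one-parameter subgroup determined by $L$, producing a family $\tcY \to \bbA^1_L$ together with a small birational morphism $\tcY \to \ocY$. By \cref{prop:tbred} we are free to pass to a toric blowup of $X$, which (by the explicit construction of \cite[\S 3]{MLPv1}) we may use to arrange that the central fibre of $\tcY$ is a reduced snc divisor whose irreducible components are smooth toric surfaces, glued along their toric boundaries in the pattern dictated by the dual complex of $(X,E)$. The total space $\tcY$ is smooth away from isolated du Val points sitting over the boundary of the central fibre (from the blowdown of the internal $-1$ curves collapsed by $p$), as follows from the blowup/blowdown description of \cite[\S 3]{MLPv1} and \cite[4.20]{MLPv1}.

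Next, for (1), I would match this central fibre with $\VVert(X,E)$ from \cite[22.3]{KY22}. The vertex is defined as the combinatorial gluing of toric pairs indexed by the strata of $(X,E)$, and the quniversal central fibre has exactly this combinatorial description by the inductive blowup recipe; the fact that $L = p^*A$ with $A$ ample on $\oX$ (so $L$ lies in the interior of the nef cone of $\oX$, hence the one-parameter subgroup moves toward the barycenter of the associated chamber) guarantees that the degeneration at $t \to 0$ is the total toric degeneration rather than a partial one. Since $\tcY \to \ocY$ is small, this identification descends to $\ocY$.

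For (2) and (3), the pair $(\tcY, \VVert_{\tcY})$ is log canonical with log canonical centers precisely the strata of $\VVert_{\tcY}$: the total space has only du Val (canonical, in fact crepant small-resolvable) singularities, and the central fibre is snc away from these, with the du Val points lying on components of the central fibre in a way that does not worsen the log discrepancy (an elementary local check at a du Val point transversal to an snc component). Because $\tcY \to \ocY$ is small and birational, discrepancies and log canonical centers are preserved: every exceptional divisor of a log resolution of $(\ocY,\VVert)$ is also exceptional over $(\tcY,\VVert_{\tcY})$, and the strata correspond bijectively under the small map. This gives (2) and (3) at once.

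The main obstacle I expect is the second step, namely pinning down that the central fibre of the quniversal family restricted to $\bbA^1_L$ really is the vertex $\VVert(X,E)$ on the nose rather than some birational modification of it. This requires a careful bookkeeping of the \cite{MLPv1} blowup/blowdown procedure under the one-parameter degeneration, particularly to verify that the internal $-1$ curves contracted by $p$ (which make $L$ merely big rather than ample on $X$) do not introduce spurious extra components into the central fibre beyond isolated du Val points on existing toric components.
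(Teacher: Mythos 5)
Your proposal has a genuine gap at its foundation: you begin by ``restrict[ing] a quniversal family $\bbX \to T_{\Pic(X)}$ to the orbit closure $\bbA^1_L \subset T_{\Pic(X)}$,'' but $\bbA^1_L$ is \emph{not} contained in $T_{\Pic(X)}$. The origin $0 \in \bbA^1_L$ lies in the toric boundary of $\TV(\Nef(X))$, where the GHKI family degenerates and the quniversal families of \cite[\S 3]{MLPv1} are simply not defined. \cref{prop:fibsing} gives a small resolution of $\ocY$ over $T_{\Pic(X)}$ and over \emph{generic} orbit closures of subtori, but it makes no claim about the degenerate fibre over $0 \in \bbA^1_L$, which is precisely what \cref{prop:dvres} is about. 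So the model $\tcY$ you want to analyze does not exist, and the explicit snc/toric picture you attribute to \cite[\S 3]{MLPv1} is not available from that source — it would have to come from the GHKI scattering-diagram construction, which is a different reference and a different construction. You actually flag this issue yourself as your ``main obstacle,'' and you are right to worry: it is not a bookkeeping problem but a missing ingredient.

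A second, related issue: even if one replaces your $\tcY$ by the actual GHKI family over $\bbA^1_L$, your claim that ``the total space $\tcY$ is smooth away from isolated du Val points'' is incorrect. Near a generic point of a $1$-stratum of $\VVert$ the local model is (a deformation of) $\{xy = t^n\}$, whose singular locus for $n \geq 2$ is a curve, not a point; these non-isolated $A_{n-1}$ loci along $1$-strata are exactly the place where log canonicity needs to be checked. A toric blowup of $X$ (\cref{prop:tbred}) changes the dual complex but does not force $n=1$; one would have to replace $L$ by a multiple and pass to a semistable model, which then changes the central fibre and undercuts the identification with $\VVert(X,E)$ on the nose. The paper sidesteps all of this: part (1) follows directly from the structure constants (\cite[22.4]{KY22}); for (2)--(3) it uses \cref{prop:fibsing} only to conclude canonicity \emph{off} the central fibre, observes that lc centers must lie in $\Sing(\VVert)$ since $\VVert \subset \ocY$ is Cartier, and then rules out zero-dimensional centers along $1$-strata by quoting the explicit local charts of \cite{GHKIv1,GHKIv2} at interior points of $1$-strata — precisely the local analysis your proposal glosses over. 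Your descent step through a small resolution (``discrepancies and lc centers are preserved'') is also stated too strongly — the correspondence of lc centers under a small crepant map is a surjection by taking images, not a bijection — but this is a fixable imprecision compared to the two structural gaps above.
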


        In the statement, $\VVert(X,E)$ is the spectrum of the
        Stanley-Reisner ring associated to the dual complex of $(X,E)$.
                
  \begin{proof} That $\VVert$ is the central fibre follows easily from
    the structure constants, see e.g. \cite[22.4]{KY22}. The pair is
    canonical off the central fibre by \cref{prop:fibsing}, so all
    lc centers have image in $\VVert$. Of course the center has to be
    part of $\Sing(\VVert)$ (outside here the total space is smooth since
    $\VVert \subset \ocY$ is Cartier). So we just have to check there are
    no zero dimensional centers, along $1$-strata of $\VVert$. But now
    this follows from the explicit local charts in \cite{GHKIv2}:
    we have
    local charts around the (interiors of) $1$-strata for any such $L$, see
    \cite[0.10,3.15]{GHKIv1} and  \cite[2.5,2.7,]{GHKIv2}. 
  \end{proof}

  The cone $C \subset \Sk(V)$ has a {\it boundary}, $\partial C$,
  defined as the union of {\it faces}, where there is one face
  for each irreducible component $Z \subset E$, defined as
  $$
  Z^{\perp} \coloneqq (\theta_{Z \in \Sk(G)}^{\trop} = 0) \cap C. 
  $$
  
  We will need to know this is {\it small} in the following sense:

  \begin{proposition} \label{prop:small} Notation as immediately above. No face $Z^{\perp} \subset C$
    contains an open (in $\Sk(V)$) neighborhood of a point in $w^{-1}(\Nef(X))$.
  \end{proposition}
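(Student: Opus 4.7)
The plan is to argue by contradiction via a quadratic-versus-linear counting mismatch on integer points of $Z^\perp \cap P_{nL_0}$. Suppose a face $Z^\perp \subset C$ contains an open neighborhood $N \subset \Sk(V)$ of some point $v_0$ with $L_0 \coloneqq w(v_0) \in \Nef(X)$. Since $\theta_Z^{\trop}$ is positively homogeneous of degree one under the cone scaling on $\Sk(V)$, $Z^\perp$ is itself a subcone of $C$; replacing $v_0$ by $n v_0$ and $N$ by $nN$, we may assume $L_0$ is ample.

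First I would extract a quadratic lower bound on $|(Z^\perp \cap P_{nL_0})(\bbZ)|$. The map $w$ is the tropicalisation of the surjective torus map $V \to T_{\Pic(X)}$, hence is a surjective, homogeneous PL map, so $N \cap w^{-1}(L_0)$ is open inside the two-dimensional polytope $P_{L_0}$ and carries positive integral-affine area $\alpha > 0$. By homogeneity $nN \cap P_{nL_0} = n(N \cap P_{L_0})$ has area $n^2 \alpha$, and therefore contains at least $c n^2$ integer points of $Z^\perp$ for some $c > 0$ and all $n \gg 0$.

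The matching linear upper bound comes from Symmetry and Independence. For $v \in C(\bbZ)$, the basic convexity (\cref{rem:convexcones}) realises $\theta_v$ as a regular function on $\cT$, hence an element of $H^0(X, w(v)) \subset \Cox(X)$. Symmetry, available for our cluster pair $(G,V)$ by \cite{CMMM}, gives $\theta_Z^{\trop}(v) = \theta_v^{\trop}(Z) = \ord_Z(\theta_v)$, so $v \in Z^\perp \cap P_{nL_0}(\bbZ)$ iff $\theta_v \in H^0(X, nL_0)$ has nonzero restriction to $Z$. For any nontrivial combination $f = \sum c_v \theta_v$ indexed by such $v$, Independence \cite{CMMM} evaluated at $Z \in \Sk(G)(\bbZ)$ yields
\begin{equation*}
  \ord_Z(f) \;=\; f^{\trop}(Z) \;=\; \min_{v : c_v \neq 0} \theta_v^{\trop}(Z) \;=\; 0,
\end{equation*}
so $f|_Z \neq 0$. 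The restrictions $\theta_v|_Z$ are therefore linearly independent in $H^0(Z, nL_0|_Z)$, and hence
\begin{equation*}
  \lvert (Z^\perp \cap P_{nL_0})(\bbZ) \rvert \;\leq\; h^0(Z, nL_0|_Z) \;=\; O(n)
\end{equation*}
by Riemann--Roch on the curve $Z$. Combined with the lower bound $c n^2$ this is a contradiction for $n$ large.

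The main obstacle is verifying that the versions of Symmetry and Independence proved in \cite{CMMM} for Fock--Goncharov cluster varieties are available in exactly the form used above, namely that $(\sum c_v \theta_v)^{\trop}$ may be evaluated at the specific divisorial valuation $Z \in \Sk(G)(\bbZ)$ to compute $\ord_Z$ of the corresponding section on $\cT$; once this tropical identity is secured, the rest of the argument is a straightforward area versus lattice-point computation.
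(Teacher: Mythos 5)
Your counting argument (quadratic growth of lattice points in dilations of an open neighborhood, versus linear growth of the space of sections on a curve) is an elegant idea, but as written it has two problems, one minor and one fatal.

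The minor one is the reduction to $L_0$ ample. Scaling $v_0$ to $nv_0$ scales $L_0$ to $nL_0$; if $L_0$ is nef but not ample, so is $nL_0$, so this does not put you in the ample case. The correct reduction (the one the paper uses) is via openness of $w\colon\Sk(V)\to\Pic(X)_{\bbR}$: since $w$ is open, $w(N)$ is an open neighborhood of $L_0$ in $\Pic(X)_\bbR$, hence meets $\Ample(X)$, and for such $A$ the slice $N\cap w^{-1}(A)$ is open in $w^{-1}(A)$.

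The fatal one is circularity. Your upper bound is extracted from the pairing $\theta_v^{\trop}(Z)=\ord_Z(\theta_v)$ and from Independence applied to linear combinations $\sum c_v\theta_v$. Both of these require interpreting the theta functions $\theta_v\in A_{\Sk(V)}$ as regular functions on $\bbG$ (equivalently, sections of line bundles on $X$), so that $\ord_Z$ makes sense. That interpretation is precisely \cref{cor:dm}, the double mirror statement, and the versions of Symmetry and Independence you invoke are those of \cref{prop:ndsym}, whose proof in the paper explicitly relies on \cref{cor:dm}. But \cref{cor:dm} is deduced from a chain (\cref{prop:fut} $\leftarrow$ \cref{prop:sss} $\leftarrow$ \cref{prop:amps} $\leftarrow$ \cref{prop:bbY}) that depends on \cref{prop:small}. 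So you would be using \cref{prop:small} to prove \cref{prop:small}. (Your appeal to \cref{rem:convexcones} for ``$\theta_v$ is a regular function on $\cT$'' does not fill this gap: that remark only gives convexity/subalgebra statements about $A_C$ internally, not the identification $A_C=\Cox(X)$.)

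The paper's proof avoids all of this by working directly with the geometry of the GHKI family and its log canonical centers (\cref{prop:dvres}): integer points of $w^{-1}(A)\cap C$ are identified with divisorial valuations on a degeneration $\cY\to\TV(F_A)$ whose center dominates the boundary $B$, and those lying in $Z^\perp$ are shown (using the local toric charts of \cite{GHKIv2}, via the explicit $xy=t^n$ singularity model) to sweep out a one-dimensional locus inside the two-dimensional fibre $w^{-1}(A)$. This dimension computation is available at that stage of the argument, whereas your linear bound $h^0(Z,nL_0|_Z)=O(n)$, while correct in spirit, can only be certified after the double mirror theorem is in hand. If you wanted to keep the lattice-point-counting strategy, you would need an independent linear upper bound on $\dim(A_C/[Z>0])_{nL_0}$, e.g.\ a version of \cref{lem:P1} proved without passing through \cref{prop:bbY}; but as stated, \cref{lem:P1} also sits downstream of \cref{prop:small} in the paper's logic.
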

  \begin{proof} $w:\Sk(V) \to \Pic(X)_{\bbR}$ is open. So it's enough to prove
    $Z^{\perp}$ contains no open subset of the fibre $w^{-1}(A)$ for
    $A \in \Ample(X)$. This we compute as follows: Take $F_A \subset \Pic(X)$
    the fan with just the single ray $\bbZ \cdot A$. Then there is
    a natural $f: \TV(F_A) \to T_{Pic(X)/\bbZ A} \eqqcolon T$,
    a trivial  $\bbA^1$ bundle. 
    We pullback the GHKI family along $\TV(F_A) \to \TV(\Nef(X))$ to give
    $s: \cY \to \TV(F_A)$
    Rational points of  $w^{-1}(A) \cap C \subset \Sk(V)$ correspond to
    centers of log canonical singularity for the pair $(\cY,\cY|_B)$, where
    $B \subset \TV(F_A)$ is the boundary divisor, which dominate $B$.
    The restriction $\cY|_B \to B$ is trivial, with fibre $\VVert$ as in
    \cref{prop:dvres}.
    Points of $Z^{\perp} \cap w^{-1}(A) \cap C$ correspond to exceptional
    divisors (on some resolution of the total space) which do not map
    to $0 \times B \in \VVert \times B = \cY|_B$. It is easy to see this
    locus is one dimensional: By \cref{prop:dvres}, the exceptional
    divisor is either the strict
    transform of a component of $\VVert$ (which gives a single point of $\Sk(V)$),
    or dominates $S_Z \times B$, where $S_Z \subset \VVert$ is the one stratum
    corresponding to $Z \subset E$. The deformation generically along
    $S_Z \times B$ is toric, given by the toric charts of \cite{GHKIv2}, and one
    computes that these valuations contribute an interval to $Z^{\perp} \cap w^{-1}(A) \cap C$ (the dual complex to the exceptional locus in the
    minimal resolution of the surface singularity $xy = t^n \subset \bbA^3_{x,y,t}$).
    The fibre $w^{-1}(A) \subset \Sk(V)$ is two dimensional.
    This completes the proof.
    \end{proof}

        \section{local models}
        We have the mirror family $V \to T_{Pic(X)}$, $\bbV \subset \bbX$
        gives a fibrewise compactification of a small resolution. Let
        $\bbX \subset \bbX_1$ be an snc partial compactification with
        $\bbX_1 \to \cY$ regular and proper, where $\cY \to \TV(\Nef(X))$ is
        the GHKI mirror family. Finally let $\bbX_1 \subset \obbV$ be
        an snc compactification.

        We take $p: X \to \oX$ contracting a set of (necessarily
        disjoint) internal $-1$ curves. We consider the subring
        $A_{C,p^*(\Nef(\oX)),L_{\obbV}} \subset A_{C,L_{\obbX}}$ generated by
        homogeneous elements with weight in $p^*(\Nef(\oX))$.

        Let $\oT \subset \TV(\Nef(X))$ be the closure of a general
        $T  \coloneqq T_{p^*(\Pic(X))}$ orbit. We note $T \subset \oT$ is
        the toric variety 
        $T_{\Pic(\oX)} \subset \TV(\Nef(\oX))$, and the intersection of
        $\oT$ with $S_p$, the closed stratum associated to $p$, is
        (under the natural isomorphism $\oT = \TV(\Nef(\oX)$) 
        the zero stratum $0 \in \TV(\Nef(\oX))$, and this is
        an interior point of  $S_p$. 

        \begin{lemma} \label{lem:vert} The restriction $\cY|_{S_p} \to S_p$ is trivial,
          with fibre the vertex $\VVert$ for $\Sigma_{(X,E)}$.
        \end{lemma}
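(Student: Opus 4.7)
The plan is to combine the $T_{\Pic(X)}$-equivariance of the GHKI family $\cY \to \TV(\Nef(X))$ with the central-fibre computation of \cref{prop:dvres}(1).

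First I would identify $S_p$ as the $T_{\Pic(X)}$-orbit in $\TV(\Nef(X))$ attached to the face $p^*(\Nef(\oX))$ of $\Nef(X)$; its stabiliser is the subtorus $T_{p^*(\Pic(\oX))}$, and its dimension equals the number $k$ of internal $-1$ curves contracted by $p$. For any $A \in \Ample(\oX)$, setting $L = p^*A$, the central point $\xi$ of the one-parameter subgroup $\bbA^1_L \hookrightarrow \TV(\Nef(X))$ lies in $S_p$ (indeed it is precisely $0 \in \oT$ under the identification $\oT = \TV(\Nef(\oX))$, since $L$ is in the relative interior of $p^*(\Nef(\oX))$). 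By \cref{prop:dvres}(1) the fibre $\cY_\xi$ is canonically $\VVert(X,E)$.

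Next I would promote this single-fibre identification to a trivialisation over all of $S_p$. Since $p$ contracts disjoint internal $-1$ curves $E_1,\dots,E_k$, the Picard group splits canonically as $\Pic(X) = p^*\Pic(\oX) \oplus \bigoplus_{i=1}^k \bbZ\cdot [E_i]$, inducing a splitting $T_{\Pic(X)} = T_{p^*\Pic(\oX)} \times T_{\bbZ^k}$. The factor $T_{\bbZ^k}$ has trivial intersection with the stabiliser $T_{p^*\Pic(\oX)}$ of $\xi$, hence acts freely and transitively on the orbit $S_p$. The $T_{\Pic(X)}$-equivariance of $\cY$, restricted to the sub-action of $T_{\bbZ^k}$, then produces the trivialisation
\[
\cY|_{S_p} \;\cong\; T_{\bbZ^k} \times \cY_\xi \;=\; S_p \times \VVert(X,E),
\]
via the action map $(t,y) \mapsto t \cdot y$ (injective since the $T_{\bbZ^k}$-action on $S_p$ is free, surjective since it is transitive).

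The only genuine input is the identification $\cY_\xi \cong \VVert(X,E)$, which is \cref{prop:dvres}(1) applied to $L = p^*A$. Everything else is formal equivariance, enabled by the canonical splitting of $\Pic(X)$ that comes from $p$ contracting internal $-1$ curves; I do not anticipate a substantive obstacle beyond the routine toric bookkeeping.
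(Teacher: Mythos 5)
Your reduction to the single-fibre identification $\cY_\xi \cong \VVert(X,E)$ via \cref{prop:dvres}(1) is sound, but the equivariance you then invoke to transport it across $S_p$ is not available. The GHKI family $\cY \to \TV(\Nef(X))$ is not $T_{\Pic(X)}$-equivariant: if it were, then over the open orbit any two fibres of $V = \cY|_{T_{\Pic(X)}}$ would be $T_{\Pic(X)}$-translates of one another and hence isomorphic as pairs, contradicting the main theorem of \cite{LZ23} (used throughout the paper) that $V \to T_{\Pic(X)}$ is a \emph{universal} family whose marked fibres vary with the period point. Equivalently, a $T_{\Pic(X)}$-equivariant structure would amount to an $N_1(X,\bbZ)$-grading on $\cO(\cY)$ making each $\theta_q$ homogeneous; but a single triple $(p,q,r)$ typically receives broken-line contributions $N_{pq}^{r,C}z^C$ for several distinct curve classes $C$, so no assignment of weights to the $\theta_q$ can make the multiplication graded. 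So the $T_{\bbZ^k}$-action on $\cY$ that your trivialisation rests on simply does not exist.

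What the GHKI construction does provide is the $\bbG_m^E$-action (one coordinate per boundary component of $E$), equivariant over the map $\bbZ_E \to \Pic(X)$. But the composite $\bbZ_E \to \Pic(X) \twoheadrightarrow \Pic(X)/p^*\Pic(\oX) \cong \bbZ^k$ is surjective only when the $k$ contracted $(-1)$-curves all meet pairwise distinct boundary components (if two of them meet the same $D_j$ one only obtains their sum in the image), so $\bbG_m^E$ need not act transitively on $S_p$ and even this genuine equivariance does not close the argument in general. The paper's proof is instead by inspection of the structure constants: over a point of $S_p$ exactly the monomials $z^C$ with $p_*C = 0$ survive, and one checks that the surviving terms reproduce the Stanley--Reisner relations of $\VVert(X,E)$ on \emph{every} fibre, not just the one you computed; see \cite[3.40]{GHKIv2}, or \cite[22.4]{KY22} for the structure-constant computation.
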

        \begin{proof} See \cite[3.40]{GHKIv2}. One can also deduce this directly
          from the definition of structure constants, see \cite[22.4]{KY22}.
          \end{proof}

        \begin{proposition} \label{prop:bbY} Let $\bbY$ indicate the formal completion
          of $\cY|_{\oT}$ at the point $0 \in \VVert$, over the zero
          stratum of $\oT = \TV(\Nef(\oX))$. The subring
          $A_{C,p^*(\Nef(\oX)}$ can be computed using analytic disks in
          the Berkovich generic fibre of $\hcY$ (viewed as special
          formal scheme over the constants).

          More precisely: Choose snc compactification
          $\bbV \subset \obbV$ as above. Let $\hbbY \to \bbY$ be a log
          resolution of this affine formal scheme. Assume the composition
          $\hbbY \to \cY$ factors through $\obbX_1 \to \cY$
          (see the definition of $\bbX_1$ above).
          Then
          $$
          A_{C,p^*(\Nef(\oX)),L_{\obbV}} = A_{\hbbY \to \bbY}
          \otimes_{L_{\hbbY/\bbY}} L_{\obbV}
          $$
          where $A_{\hbbY \to \bbY}$ is the mirror to the formal
          deformation, see \cref{sec:mfd}. 
        \end{proposition}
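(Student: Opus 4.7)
The plan is to identify $A_{C,p^*(\Nef(\oX)),L_{\obbV}}$ in two stages: first with a mirror algebra computed on the restriction $\cY|_{\oT}$, and then with the mirror algebra of the formal deformation $\bbY$. Both identifications rest on the convexity principle that counts of punctured Berkovich disks are controlled by the asymptotic weights and central limits of those disks.

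First I would apply the basic convexity \cite[24.7]{KY22} (in its multi-parameter form) to the subtorus $T = T_{p^*(\Pic(\oX))} \subset T_{\Pic(X)}$. The graded subring of weights in $p^*(\Nef(\oX)) \subset \Pic(X)$ is precisely the direct sum of $T$-weight spaces whose weights lie in the character cone of the affine toric variety $\oT$. By \cite[24.7]{KY22}, this subring is the mirror algebra for the restriction of $V \to T_{\Pic(X)}$ along $\oT \to T_{\Pic(X)}$, i.e.\ the restriction to a general $T$-orbit compactified inside $\oT$. Tracking coefficients via the snc compactification $\bbV \subset \obbV$, one obtains an identification of $A_{C,p^*(\Nef(\oX)),L_{\obbV}}$ with naive counts of punctured Berkovich disks in the Berkovich generic fibre of $\cY|_{\oT}$, with classes in $A_1(\obbV,\bbZ)$. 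This step is structurally parallel to, but a genuine multi-parameter enhancement of, the invariant-subring argument in \cref{prop:tbred}.

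Next I would localise these disk counts to the formal neighborhood of $0 \in \VVert \subset \cY|_{\oT}$. By \cref{lem:vert}, $\cY|_{S_p} \to S_p$ is trivial with fibre $\VVert$, and $0 \in \oT$ lies in the interior of the closed stratum $\oT \cap S_p$. The disks contributing to structure constants of weight $L \in p^*(\Nef(\oX))$ specialise on the base to the origin of $\oT$; on the total space, the convexity $\theta_Z^{\trop} \geq 0$ together with the toric local models of \cite{GHKIv2} around the one-strata of $\VVert$ forces the central limit to be the single point $0 \in \VVert$ rather than a deeper point of some $S_Z$. Therefore the same counts are obtained after passing to the formal completion $\bbY \to \cY|_{\oT}$, and then to the log resolution $\hbbY \to \bbY$. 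The equality $A_{C,p^*(\Nef(\oX)),L_{\obbV}} = A_{\hbbY \to \bbY} \otimes_{L_{\hbbY/\bbY}} L_{\obbV}$ then becomes a tautological comparison of the same disk counts tracked with different coefficient rings: the hypothesis that $\hbbY \to \cY$ factors through $\obbX_1$ ensures every boundary divisor of $\hbbY$ descends to one of $\obbV$, so the homomorphism $L_{\hbbY/\bbY} \to L_{\obbV}$ is well-defined and the base change is built into the definitions.

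\textbf{Main obstacle.} The crux is the localisation step: one must verify that no disks contributing to weight-in-$p^*(\Nef(\oX))$ counts escape the formal neighborhood of $0$, either horizontally in $\oT$ or vertically in $\VVert$. This mixes the global combinatorics of the cone $C$ (for which \cref{prop:small} is the key input, ruling out disks whose centres drift into the deeper face stratification) with the explicit toric structure of the GHKI local charts around the codimension-one strata of the central fibre. Once the disks are pinned to a formal neighborhood of $0$, the rest is essentially coefficient bookkeeping.
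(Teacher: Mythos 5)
Your proposal captures the two-stage structure of the paper's argument --- reduce to disks in the general $T_{\Pic(\oX)}$-orbit closure $\oT$ via the basic convexity \cite[24.7]{KY22}, then localise to the formal neighborhood of $0 \in \VVert$ --- and you correctly identify \cref{prop:small} as the key technical input for the second stage. The first stage matches the paper's proof essentially verbatim. There is, however, a genuine gap in the mechanism you give for the second, localisation stage. You assert that the convexity $\theta_Z^{\trop} \geq 0$ together with the toric local charts of \cite{GHKIv2} ``forces the central limit to be the single point $0 \in \VVert$ rather than a deeper point of some $S_Z$.'' That statement is too strong: disks contributing to the structure constants certainly can have centres on the one-strata of $\VVert$ (that is exactly what the faces $Z^{\perp} \subset C$ parameterise). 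Nothing in the convexity of $C$, or in the toric local models (which in the paper are used in the proof of \cref{prop:small}, not directly here), forbids that. What the paper actually argues is a genericity-plus-propagation statement: the structure constants can be computed using structure disks with \emph{transverse spine}, which lets one place the basepoint off any fixed lower-dimensional locus; \cref{prop:small} says the faces of $C$ are lower-dimensional, so the basepoint can be taken in the interior of $C$; the maximum principle then propagates $\theta_Z^{\trop} > 0$ from the basepoint to the whole disk. Your write-up never invokes the freedom to choose a transverse spine nor the maximum principle, and without those two ingredients the step that pins the disk to the formal neighborhood of $0 \in \VVert$ has no proof. The coefficient bookkeeping at the end is fine once the localisation is established.
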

        \begin{proof} By the basic convexity \cite[24.7]{KY22}, we can
          compute with disks in the closure of general $T_{\Pic(\oX)}$ orbit.
          Further, 
          by definition of $w: \Sk(V) \to \Pic(X)_{\bbR}$,
          for $v \in  w^{-1}(\Nef(\oX)) \subset \Sk(V)$, $v(z^C) \geq 0$
          for all $C$ in the dual cone to the face $p^*(\Nef(\oX)) \subset
          \Nef(\oX)$, or equivalently, the center of $v$ maps into the
          affine open subset $\TV(p^*(\Nef(\oX))) \subset \TV(\Nef(\oX))$.
          As $v \in \Sk(V)$, the center is in the boundary, so the image
          is in the closed stratum $S_p$. It follows we can compute
          structure constants with disks in the formal completion of
          $\cY|_{\oT}$ along the fibre over $0 \in \oT = \TV(\Nef(\oX))$.
          We argue that we can work more locally, namely at the further
          completion (of the total space $\cY|_{S_p}$) at
          $0 \in \VVert(X,E)$ (see \cref{lem:vert}). This amounts
          to showing we can use structure disks in the locus where
          $\theta_{Z \in \Sk(G)}^{\trop} > 0$ for all $Z \subset E$. But
          we can compute with structure disks having transverse spine --
          which means in particular that the basepoint avoids any fixed
          lower dimensional locus in $\Sk(V)$, and so by \cref{prop:small},
          we can assume this in the desired locus (the complement of the
          faces of $C$). But now the full disk lies in this locus, by
          the maximum principal. This completes the proof.
        \end{proof}

      \begin{remark} \label{rem:small} The proposition shows that for these {\it pieces} of
        the mirror algebra, we need only the formal smoothing
        of the vertex, so we can hope to carry out the same argument when
        $U$ is not affine (and so the GHKI mirror is only
        formal). 
        \end{remark}

          \begin{proposition} \label{prop:eqdef}
            Let $\bbY$ be as in \cref{prop:bbY},
            and $\hTV(\Nef(\oX))$ be
          the formal completion at $0 \in \oT = \TV(\Nef(\oX))$. Then
          the formal deformation $\bbY \to \hTV(\Nef(\oX))$ is isomorphic
          to the formal completion of the GHKI family for $(\oX,E)$ at
          $0 \in \VVert(X,E)$ in its central fibre.
        \end{proposition}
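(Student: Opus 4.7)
The plan is to recognise both formal schemes as formal smoothings of the same Stanley--Reisner scheme $\VVert$ at its origin, and to compare them directly using the explicit local-model construction of the GHKI family from \cite{GHKIv2} together with its functoriality under the blowdown $p:X\to\oX$.

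First, since $p$ contracts only internal $-1$ curves, which are disjoint from $E$, the boundary cycle and its dual complex are unchanged, so $\VVert(X,E)=\VVert(\oX,E)$; denote this common scheme by $\VVert$. By \cref{lem:vert}, $\cY|_{S_p}\to S_p$ is the trivial family $\VVert\times S_p$, and $0\in\oT$ is an interior point of $S_p$, so completing $\cY|_{\oT}$ at the point $0\in\VVert\times\{0\}$ sitting above $0\in\oT=\TV(\Nef(\oX))$ yields $\bbY\to\hTV(\Nef(\oX))$ as a flat formal deformation of the local ring of $\VVert$ at $0$. The formal completion of the GHKI family for $(\oX,E)$ at $0\in\VVert$ in its central fibre is, by construction, another such deformation over $\hTV(\Nef(\oX))$.

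Next I would compare the two via the explicit gluing in \cite[\S 2--3]{GHKIv2}: the GHKI family is built from toric smoothing charts along the $1$-strata of $\VVert$ --- which depend only on the self-intersections of the boundary components $Z\subset E$ and are hence identical for $(X,E)$ and $(\oX,E)$ --- together with a canonical formal smoothing at each $0$-stratum governed by the consistent scattering diagram. The restriction of the base from $\TV(\Nef(X))$ down to $\oT=\TV(\Nef(\oX))$ preserves the toric $1$-stratum charts, because the pullback $p^{*}$ preserves boundary classes. For the scattering contribution at $0$, those walls of the $X$-diagram whose exponents are supported on classes of the contracted $(-1)$-curves become trivial once the corresponding monomials are set to $1$, which is exactly what restriction to $\oT$ followed by formal completion at $0$ accomplishes; the surviving walls form the canonical $(\oX,E)$-scattering diagram.

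The main obstacle is making this scattering reduction rigorous: verifying that restriction to $\oT$ produces no new walls and that the surviving walls reproduce the canonical $\oX$-scattering with correct multiplicities. This is handled in \cite{GHKIv2} through the fact that initial rays for $X$ push forward to initial rays for $\oX$ and that consistency of broken-line scattering is preserved under such pushforward. Alternatively, I would argue by uniqueness: both $\bbY$ and the formal completion of the $\oX$-GHKI family are canonical formal smoothings of $\VVert$ over $\hTV(\Nef(\oX))$ that restrict to the prescribed toric models along each $1$-stratum, and such a smoothing is pinned down by its toric local data together with consistency of the gluing around $0\in\VVert$ --- a condition satisfied by both sides, giving the required isomorphism of formal deformations.
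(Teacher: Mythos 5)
Your overall strategy is the same as the paper's: compare the two formal deformations through the explicit local-model construction of the GHKI family. Indeed the paper's entire proof is a one-line citation to \cite[3.40]{GHKIv2}, which treats the case where $p:X\to\oX$ is a full toric model, together with the observation that the same argument applies verbatim to any contraction of internal $-1$ curves; your proposal is essentially an attempt to spell out what that argument does.

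There is, however, a concrete error in the middle of your argument. You claim that the toric smoothing charts along the $1$-strata of $\VVert$ ``depend only on the self-intersections of the boundary components $Z\subset E$ and are hence identical for $(X,E)$ and $(\oX,E)$.'' But the self-intersections are \emph{not} identical: contracting an internal $-1$ curve $Q_i$ meeting $E_i$ transversely raises $E_i^2$ by one. Likewise, the claim that $p^*$ ``preserves boundary classes'' is off by the exceptional class: $p^*(E_i^{\oX})=E_i^X+Q_i$. So the $1$-stratum local models for the $X$- and $\oX$-families genuinely differ, and the agreement you need is produced only after restriction to a general orbit closure $\oT$ and completion at $0\in\VVert$. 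Setting $z^{Q_i}$ to a generic nonzero constant does not make the associated walls ``trivial''; rather, it shifts the corresponding singularity along the $1$-stratum of $\VVert$ away from the origin, so that in the formal neighborhood of $0$ the effective local model is the $\oX$ one. This is precisely the ``moving worms'' picture recorded in \cref{rem:mw}, and it is the reconciliation your $1$-stratum comparison is silently relying on. Your fallback uniqueness argument has the same gap: the toric $1$-stratum data prescribed by the two constructions only agree after this shift, so identifying both sides as ``the canonical smoothing with the prescribed toric local models'' requires first establishing that the local models match near $0$, which is exactly what needs proving.
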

        \begin{proof} This follows from the argument for \cite[3.40]{GHKIv2} --
          that proposition gives this in the case that $p: X \to \oX$
          is a toric model, but
          the argument works, without change, for any contraction of internal
          $-1$ curves. \end{proof} 

 \begin{remark} \label{rem:mw}  The equality of deformations in \cref{prop:eqdef} 
    is an instance of the {\it moving worms} of \cite{Kontsevich_Affine_structures}. The family over
    this stratum is trivial, but the singularities of the total space vary:
    over $0 \in S_{p}$ there is a complicated singularity at $0 \in \VVert$.
    But this factors over $S_{p}$, into (generically) simple singularities,
    along the $1$-strata of $\VVert$ with position given by the coordinates
    $z^Q$, for $Q$ the exceptional curves. In particular, over the (relative) interior
    of the stratum, 
    for each $Q$ has deformed away from $0 \in \VVert$, and the resulting singularity
    at $0$ is the singularity from the $\oX$ family.
    \end{remark}

        \begin{remark} \label{rem:beinduct} We will use \cref{prop:bbY} and
          \cref{prop:eqdef} to compute GIT quotients $A_C//L$ for
          $L \in p^*(\Nef(\oX))$, or maps of GIT quotients, such as
          $q_1,q_2$ in \cref{prop:induct}, by induction on the charge -- reducing the question
          from $X$ to $\oX$. But the statement we get is slightly
          subtle: There are three
          mirror families here, which we will refer to as the $X$ case, the
          $\oX$ case, and the formal case.

          We would like to say $A_{C,p^*(\Nef(\oX))}$ (the $X$ case) is 
          a base extension of $A^{\oX}_{C,\Ample(\oX)}$ (the $\oX$ case,
          where the superscript
          $\oX$ means the analogous double mirror algebra for $(\oX,E)$), but
          what we actually get, by \cref{prop:bbY} and \cref{prop:eqdef}
          is that each of these is a base extension
          of the same mirror algebra associated to (a sufficiently blownup)
          formal resolution $\hbbY \to \bbY$ as in \cref{prop:bbY} (the formal
          case). We do
          not know that $T_{\Pic(\oX)} \to T_{\hbbY/\bbY}$ is dominant, a priori
          the mirror family for the formal resolution could be {\it bigger}, could
          have new isomorphism classes of fibres. However this does show
          that the fibres of these three mirror families are deformation equivalent,
          and for our purposes, this will be enough.  We note there is one instance
          where the isomorphism classes of
          fibres of these families are the same, that is when $p$ is a full
          toric model. For in that case, the fibres are toric, and so have
          no moduli.
          \end{remark}

    \subsection{The mirror to a formal deformation} \label{sec:mfd}

In \cite{LoganThesis} the mirror construction of \cite{KY22} is extended to
the formal setting: Let $(\cY,\cD)$ be an affine formal scheme with an
effective Weil divisor such that $K_{\cY} + \cD$ is log canonical and
linear equivalent to an effective divisor $\cE$ supported
on $\cD$. Let $r: \hcY \to \cY$ be a log resolution of $(\cY,\cD)$ such that
the essential dual complex has maximal (namely $\dim \cY$) dimension. The main
result of \cite{LoganThesis} is an algebra structure on the free
$L_{\hcY/\cY}$ algebra $A_{\hcY \to \cY}$ with basis the integer points of the
essential dual complex, with structure constants naive counts of Berkovich
analytic disks on the Berkovich generic fibre of $\hcY$ (viewed as a special
formal scheme over the trivially valued ground field) as in \cite{KY22}.

 \section{GIT Basics} \label{sec:GITbasics}

 We recall some of the VGIT theory of a torus acting on an affine variety over $\mathbb{C}$, varying linearizations of the trivial bundle. Everything in
 this section is well known, we include a few short arguments for the
 reader's convenience.

Take $P$ an integer lattice of finite rank and $T^P$ the torus with character lattice $P$. Let $A$ denote a finitely generated integral $\bbC$-algebra, graded over $P$.
A linearization of $\Spec(A) \times \mathbb{A}^1$ corresponds to a choice of weight $\chi \in P$ describing the action of $T^P$ on the second factor. This linearization, call it $L_{\chi}$, has invariant sections $A_{\chi}$.

In \cite{BH06}, Berchtold and Hausen give a self-contained treatment of the VGIT theory of such linearizations. They show there is a finite rational polyhedral fan $\Sigma$ in $P$, with support a convex cone $|\Sigma| \subset P_{\bbR}$, where
$|\Sigma|(\bbZ) \subset P$ is 
the set of weights $\chi$ with $A_{\chi} \neq \emptyset$, and for any two $\chi_1, \chi_2 \in P$, we have equality of semistable loci $A^{\text{ss}}(L_{\chi_1}) = A^{\text{ss}}(L_{\chi_2})$ if and only if $\chi_1, \chi_2$ lie in the interior of the same cone $\sigma \in \Sigma$. Thus to each such cone is associated a semistable locus $A^{\text{ss}}(\sigma) \subset \Spec(A)$. Additionally, for any two $\sigma_1, \sigma_2 \in \Sigma$, we have an order-reversing correspondence between cone inclusions and semistable loci $\sigma_1 \subset \sigma_2 \iff A^{\text{ss}}(\sigma_2) \subset A^{\text{ss}}(\sigma_1)$.

Let $E = |\Sigma|(\bbZ)$.

\begin{lemma} \label{lem:finite} $E^{\text{gp}} = P$ iff the kernel of the action of
  $T^P$ on $\Spec(A)$ is finite.
\end{lemma}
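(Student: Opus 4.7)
The plan is to pin down the kernel $K \subset T^P$ of the action explicitly in terms of the weight set $E$ via Cartier duality, and then convert the size of $K$ into a statement about the subgroup $E^{\gp} \subset P$. Because $A = \bigoplus_{\chi \in P} A_\chi$ is $P$-graded, a point $t \in T^P$ acts on each nonzero summand $A_\chi$ by the scalar $\chi(t)$, so $t$ lies in $K$ iff $\chi(t) = 1$ for every $\chi \in P$ with $A_\chi \neq 0$, i.e.\ for every $\chi \in E$. Since $A$ is an integral domain, $E$ is a submonoid of $P$, and the above identifies
\[
K \;=\; \bigcap_{\chi \in E} \ker(\chi) \;=\; \Hom(P/E^{\gp},\bbG_m) \;=\; \Spec\bbC[P/E^{\gp}].
\]
Hence $K$ is finite iff $P/E^{\gp}$ is finite iff $E^{\gp}$ has finite index in $P$.

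The implication $E^{\gp} = P \Rightarrow K$ finite now follows at once, so the real work is in the converse: promoting ``finite index'' to equality. I expect this to be the main obstacle, since in general a submonoid of $P$ can generate a proper finite-index subgroup. The extra input to exploit is that $E$ is not arbitrary: it is the full set of lattice points of a rational polyhedral cone $|\Sigma| \subset P_\bbR$. My target is the saturation statement $E^{\gp} = L \cap P$, where $L \subset P_\bbR$ is the linear span of $|\Sigma|$. Granted this, if $E^{\gp}$ has finite index in $P$ then $\rank E^{\gp} = \rank P$, so $\dim L = \dim P_\bbR$, forcing $L = P_\bbR$ and $E^{\gp} = L \cap P = P$.

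The saturation itself I would prove by the standard interior-point trick: $|\Sigma|$ is full-dimensional inside $L$ by construction, so, after clearing denominators, its relative interior contains a lattice point $u \in E$. For any $v \in L \cap P$, taking $m \gg 0$ we have $v + mu \in |\Sigma|$ by openness of the relative interior and convexity, hence $v + mu \in E$; combined with $mu \in E$ this gives $v = (v + mu) - mu \in E^{\gp}$. The one subtle place is the appeal to integrality of $A$ at the outset: it is what guarantees $E$ is closed under addition so that $E^{\gp}$ is a genuine subgroup of $P$. With the saturation in hand, the two conditions in the lemma are equivalent.
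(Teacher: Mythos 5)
Your proof correctly identifies the asymmetry in the lemma: the forward direction ($E^{\gp}=P$ gives $K$ trivial, in particular finite) is immediate from Cartier duality, while the converse is where the content lies, since $K$ finite a priori only gives that $E^{\gp}$ has finite index in $P$. The paper's own proof is the single sentence ``The kernel is the intersection of the kernels of all the characters in $E$. The lemma follows,'' which yields exactly the identification $K = \Spec\,\bbC[P/E^{\gp}]$ that you also derive, but it does not explicitly close the gap you point out. Your interior-point saturation argument, upgrading finite index to equality using that $E$ consists of \emph{all} lattice points of the rational cone $|\Sigma|$, is correct and spells out what the paper leaves implicit.

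One caveat worth flagging: the identification $E = |\Sigma|(\bbZ)$ --- i.e.\ that the weight monoid is saturated --- is the load-bearing hypothesis, and it is not automatic for an arbitrary finitely generated $P$-graded integral $\bbC$-algebra. For instance, $A = \bbC[x^2]$ with the standard $\bbZ$-grading ($\deg x^2 = 2$) has $E = 2\bbZ_{\geq 0}$, $E^{\gp} = 2\bbZ \neq \bbZ = P$, and the kernel of the $\bbG_m$-action is $\mu_2$: finite, but $E^{\gp} \neq P$. So the lemma as stated stands or falls with the assertion, attributed to Berchtold--Hausen, that the set of weights with $A_\chi \neq 0$ is the full lattice-point set of the GIT support cone; you are right to isolate the step where this enters. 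Note that in the paper's only application of the lemma, \cref{prop:interiors}, only the trivial implication $E^{\gp}=P \Rightarrow K$ finite is invoked, so nothing downstream depends on the subtler converse you worked to establish.
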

\begin{proof} The kernel is the intersection of the kernel of all the
  characters in $E$. The lemma follows. 
  \end{proof}

  \begin{proposition} \label{prop:interiors} Assume $E^{\gp} = P$. 
   Then $A^s(\sigma) = A^{\text{ss}}(\sigma)$ for
   $\sigma \in \Sigma$ maximal.
   \end{proposition}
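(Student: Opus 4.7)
The plan is to verify the two defining properties of a GIT-stable point for each $x \in A^{\mss}(\sigma)$: its stabilizer in $T^P$ is finite, and its $T^P$-orbit is closed in $A^{\mss}(\sigma)$. Both will be forced by a single preliminary fact, namely that a maximal cone $\sigma$ in $\Sigma$ must be full-dimensional in $P_\bbR$. Indeed, the hypothesis $E^{\gp} = P$ means that the weight set $E$ spans $P_\bbR$, so the support $|\Sigma|$ is full-dimensional; since every cone of the GIT fan $\Sigma$ arises as a face of some chamber, maximal cones are precisely the chambers, and these are top-dimensional. Hence $\interior(\sigma)$ is a nonempty open subset of $P_\bbR$.

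For the stabilizer, I would fix $x \in A^{\mss}(\sigma)$ and a 1-parameter subgroup $\lambda \in \Hom(\Gm, T^P)$ with $\lambda(t)\cdot x = x$ for all $t$. For each $\chi \in \interior(\sigma) \cap P$ semistability supplies $n \geq 1$ and $f \in A_{n\chi}$ with $f(x) \neq 0$; then $\lambda$-invariance of $x$ together with $T^P$-equivariance of $f$ forces $\chi(\lambda(t))^n f(x) = f(x)$, i.e.\ $\langle n\chi, \lambda\rangle = 0$. Since such $\chi$ span $P_\bbR$, $\lambda$ must be trivial. Hence the stabilizer of $x$ contains no nontrivial 1-PS, so its identity component is trivial and it is finite (consistent with \cref{lem:finite}, which guarantees that the global kernel of the $T^P$-action is already finite).

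For the closed orbit, suppose for contradiction that $T^P\cdot x$ is not closed in $A^{\mss}(\sigma)$. Then there exist a nontrivial 1-PS $\lambda$ and a point $y = \lim_{t \to 0}\lambda(t)\cdot x \in A^{\mss}(\sigma)$ with $y \neq x$. For each $\chi \in \interior(\sigma)\cap P$, semistability of $y$ yields $n \geq 1$ and $f \in A_{n\chi}$ with $f(y) \neq 0$; writing $f(\lambda(t)\cdot x) = \chi(\lambda(t))^{-n} f(x)$, the existence of a nonzero limit at $t = 0$ forces both $f(x) \neq 0$ and $\langle n\chi, \lambda\rangle = 0$. The same spanning argument forces $\lambda = 0$, a contradiction. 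The main potential subtlety is therefore just the full-dimensionality of maximal cones, which is where the hypothesis $E^{\gp} = P$ really enters; once this is established both properties reduce to the single elementary observation that the weights of sections not vanishing at a semistable point must annihilate any 1-PS acting trivially there.
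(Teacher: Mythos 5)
Your proof takes a genuinely different route from the paper's. The paper chooses graded generators to embed $\Spec(A)$ equivariantly in $\bbA^n$, invokes the known toric result \cite[14.3.14]{CLS} there, transports it back along $\Spec(A)^{\mss}(\chi) = (\bbA^n)^{\mss}(\chi) \cap \Spec(A)$ (and likewise for stable loci), and finishes by noting $\Sigma$ coarsens the ambient fan $\Sigma'$ together with convexity of the stable set of a fixed point (\cref{cor:Mum}). You instead work directly on $\Spec(A)$: you observe that $E^{\gp} = P$ forces maximal cones to be full-dimensional, then show by a single spanning argument that any $1$-parameter subgroup whose limit on a semistable point exists and remains semistable must pair to zero with every $\chi \in \interior(\sigma)$, hence be trivial, which simultaneously yields finiteness of the stabilizer and closedness of the orbit. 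Your route is more self-contained and avoids the embedding; the paper's buys the use of the well-documented toric case at the price of the coarsening/convexity discussion. Two small points worth flagging. First, your finite-stabilizer paragraph is strictly subsumed by the closed-orbit paragraph (take $y = x$), so one argument suffices. Second, the step from "$\sigma$ maximal" to "$\sigma$ top-dimensional" uses purity of the GIT fan (every cone is a face of a chamber); this is a structural theorem of Berchtold--Hausen, not an immediate consequence of $|\Sigma|$ being full-dimensional, and it deserves an explicit citation rather than being folded into the observation that $E$ spans $P_\bbR$. With those caveats the argument is correct.
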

    \begin{proof}
      Choose non-zero homogeneous generators for $A$. This embeds 
      $\Spec(A)$ equivariantly as a closed subvariety of affine space $\bbA^n$,
      intersecting $\bbG_m^n \eqqcolon T^N$, so $T^P$ acts diagonally, via
      a homomorphism $N \rightarrow P$, which has finite cokernel by
      \cref{lem:finite}. 

      Now the VGIT picture for the action of a subtorus of $\bbG_m^n$ on $\bbA^n$ is studied in great detail in Cox, Little, and Schenck [2]. In
      particular, \cref{prop:interiors} holds there, see \cite[14.3.14]{CLS}.
      Let $\Sigma'$ indicate the corresponding GIT fan. 
      
      Note for any linearization
      $$
      (\bbA^n)^{\text{ss}}(\chi) \cap \Spec(A) = \Spec(A)^{\mss}(\chi)
      $$
      and the analogous statement holds for stable loci.
      We note the semi-stable locus for this action (for any linearization)
      is open and invariant under $\bbG_m^n$, so if its non-empty it
      contains $\bbG_m^n$, and thus intersects $\Spec(A) \subset \bbA^n$.
      It follows that $|\Sigma'| = |\Sigma|$, and that $\Sigma$ is a coarsening
      of $\Sigma'$ (where a cone in $\Sigma$ is the union of those cones
      in $\Sigma'$ whose semi-stable loci have the same intersection with
      $\Spec(A)$. Now the proposition for $\Sigma$ follows from the
      proposition for $\Sigma'$, since the locus of linearizations for which
      a given point is stable is convex, see \cref{cor:Mum} below.
      This completes the proof.
\end{proof}

\begin{proposition} \label{prop:bound} Say $\sigma_1, \sigma_2 \in \Sigma$ are maximal cones, and $\chi_i \in \sigma_i^{\circ}$, $i = 1,2$. The intersection of semistable loci gives birational $A^{\mss}(\sigma_1) \dashrightarrow A^{\text{ss}}(\sigma_2)$, further inducing a canonical birational map $A//L_{\chi_1} \dashrightarrow A//L_{\chi_2}$. The exceptional locus of this birational map is contained in the vanishing of the homogeneous ideal of the coordinate ring $\oplus_{k \geq 0} A_{k \chi_1}$ generated by elements of $A_{k \chi_1} \cap A \cdot A_{m \chi_2}$ varying over $k,m \geq 1$.
\end{proposition}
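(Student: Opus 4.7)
The plan is to first construct the birational map via a common open subvariety of the two semistable loci, then translate the condition of not being semistable for $\chi_2$ into the vanishing of specific homogeneous elements. Since both $\sigma_1$ and $\sigma_2$ are maximal cones in $\Sigma$, \cref{prop:interiors} gives $A^{\mss}(\sigma_i) = A^{s}(\sigma_i)$, so each GIT quotient $A//L_{\chi_i}$ is a geometric quotient $A^{\mss}(\sigma_i)/T^P$. As $A$ is integral, $\Spec(A)$ is irreducible, hence the two non-empty open semistable loci meet in a non-empty open $T^P$-invariant subscheme $W \coloneqq A^{\mss}(\sigma_1) \cap A^{\mss}(\sigma_2)$, dense in each $A^{\mss}(\sigma_i)$. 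The open inclusions $W \hookrightarrow A^{\mss}(\sigma_i)$ descend to open immersions $W/T^P \hookrightarrow A//L_{\chi_i}$, and composing one with the inverse of the other yields the desired birational map $A//L_{\chi_1} \dashrightarrow A//L_{\chi_2}$, defined as an isomorphism on the common open $W/T^P$.

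The exceptional locus is by definition supported in the closed complement $A//L_{\chi_1} \setminus W/T^P$, which equals the image $Z$ of $A^{\mss}(\sigma_1) \setminus A^{\mss}(\sigma_2)$ under the geometric quotient map (this image is closed, since $A^{\mss}(\sigma_2)$ is open and $T^P$-invariant). Since $A//L_{\chi_1}$ has homogeneous coordinate ring $R \coloneqq \oplus_{k \geq 0} A_{k\chi_1}$, it suffices to exhibit the stated ideal $J \subset R$ as a subideal of the homogeneous ideal of $Z$, i.e.\ to check that every homogeneous generator of $J$ vanishes pointwise on $A^{\mss}(\sigma_1) \setminus A^{\mss}(\sigma_2)$.

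For this, take any homogeneous generator $f \in A_{k\chi_1} \cap A \cdot A_{m\chi_2}$, say $f = \sum_i g_i h_i$ with $h_i \in A_{m\chi_2}$ and $g_i \in A$ of weight $k\chi_1 - m\chi_2$, for some $k, m \geq 1$. By definition of semistability, a point $p \in \Spec(A)$ lies in $A^{\mss}(\sigma_2)$ iff some element of $\bigcup_{m' \geq 1} A_{m'\chi_2}$ is non-vanishing at $p$; hence if $p \in A^{\mss}(\sigma_1) \setminus A^{\mss}(\sigma_2)$, then every $h_i(p) = 0$, and so $f(p) = 0$. Thus every homogeneous generator of $J$ vanishes on $A^{\mss}(\sigma_1) \setminus A^{\mss}(\sigma_2)$, whence on $Z$, giving the required containment.

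The substantive content has been packed into \cref{prop:interiors}, which ensures the quotients are geometric so that the birational map has the clean geometric meaning used above; once this is in hand, the construction of the birational map is standard and the vanishing argument is essentially a tautology from the semistability criterion for characters of a torus.
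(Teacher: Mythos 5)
Your proof is correct and follows essentially the same line of reasoning as the paper's, with the central observation identical in both: for a homogeneous generator $s \in A_{k\chi_1} \cap A \cdot A_{m\chi_2}$, the factorization through $A_{m\chi_2}$ forces $V(A \cdot A_{m\chi_2}) \subset V(s)$, so the non-vanishing locus of $s$ in $A^{\mss}(\sigma_1)$ lands inside $A^{\mss}(\sigma_1) \cap A^{\mss}(\sigma_2)$ and therefore maps into the regular locus of the birational map. You phrase this contrapositively (vanishing of $s$ on $A^{\mss}(\sigma_1) \setminus A^{\mss}(\sigma_2)$) while the paper phrases it directly, but the content is the same.

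Where you differ is that you spend substantially more effort making the construction of the birational map precise, invoking \cref{prop:interiors} to ensure the quotients are geometric, so that $W \coloneqq A^{\mss}(\sigma_1) \cap A^{\mss}(\sigma_2)$ descends to a genuine common open subscheme; the paper takes this construction as standard background and devotes all of its (very short) proof to the vanishing argument. One caveat: \cref{prop:interiors} carries the hypothesis $E^{\gp} = P$, which is not restated in \cref{prop:bound}, so you are implicitly assuming it; you should either state it as a running assumption or note that $W/T^P$ is open in each quotient also for good (not necessarily geometric) quotients because $W$ is a saturated open (it is the non-vanishing locus of a union of ideals of invariants), which would make your argument hypothesis-free. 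This is a minor point and does not affect correctness in the context of the paper.
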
  
\begin{proof}
    Fix some $k,m \geq 1$, and a homogeneous $s \in A_{k \chi_1} \cap A \cdot A_{m \chi_2}$. We have $V(A \cdot A_{m \chi_2}) \subset V(s)$, so the complement of $V(s)$ in $A^{\text{ss}}(\sigma_1)$ is contained in the intersection $A^{\text{ss}}(\sigma_1) \cap A^{\text{ss}}(\sigma_2)$. Thus the complement of $V(s)$ in $A // L_{\chi_1}$ is contained in the regular locus of the birational $A//L_{\chi_1} \dashrightarrow A//L_{\chi_2}$. 
\end{proof}

For the next proposition, we use the following consequence of [3, Proposition 2.5]. 
\begin{proposition}[Mumford's Criterion for Stability] \label{prop:Mum} Suppose $T^P$ acts on $\mathbb{C}^n$ as a subtorus of $(\mathbb{C}^*)^n$. Then a closed point $x$ is stable for $L_{\chi}$, $\chi \in P$, if and only if for every 1-parameter subgroup $\lambda$ of $T^P$ such that $\lim_{t \rightarrow 0} \lambda(t) \cdot x$ exists, we have $(\chi, \lambda) > 0$. 
\end{proposition}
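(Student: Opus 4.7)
The plan is to deduce this as a direct application of the general Hilbert--Mumford numerical criterion for stability, exploiting the fact that $L_\chi$ is a trivial bundle on which the torus action is completely transparent. The only real content is computing the Mumford weight explicitly in this setting.

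First I would invoke the general numerical criterion (the content of [3, Proposition 2.5], or equivalently the standard formulation in Mumford's GIT): for a reductive group $G$ acting on an affine variety with $G$-linearization $L$, a closed point $x$ is stable if and only if $\mu^L(x,\lambda) > 0$ for every non-trivial $1$-parameter subgroup $\lambda : \bbG_m \to G$ for which $\lim_{t \to 0}\lambda(t)\cdot x$ exists. Equivalently (up to sign convention), $\lambda$ must act on the fibre of $L$ over the limit point $x_0 := \lim_{t \to 0}\lambda(t)\cdot x$ with strictly positive weight.

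Second I would identify this weight in the torus case. By construction $L_\chi = \Spec(A) \times \bbA^1$ with $T^P$ acting on the second factor via the character $\chi$, so a $1$-PS $\lambda$ of $T^P$ acts on the fibre of $L_\chi$ over any $T^P$-fixed point $x_0$ by the composition $\chi \circ \lambda \in \Hom(\bbG_m,\bbG_m) = \bbZ$, which is exactly the pairing $(\chi,\lambda)$. Hence $\mu^{L_\chi}(x,\lambda) = (\chi,\lambda)$, and substituting into the numerical criterion yields the stated equivalence. A useful (though logically dispensable) reformulation: after embedding $T^P \hookrightarrow (\bbC^*)^n$ and writing $\lambda$ by its coordinate weights $a_1,\dots,a_n \in \bbZ$, the condition that $\lim_{t \to 0}\lambda(t)\cdot x$ exists becomes $a_i \geq 0$ whenever $x_i \neq 0$, so the $1$-PS to be tested are parametrized by a convex cone depending only on the support of $x$.

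The main (essentially the only) point requiring care is the sign convention for the Mumford weight, since different sources use opposite signs and one must be consistent to land on $(\chi,\lambda) > 0$ rather than $< 0$. Beyond this bookkeeping, nothing substantive remains to be verified: triviality of the underlying bundle reduces the Mumford weight to a character pairing, and the rest is a direct transcription of the general criterion.
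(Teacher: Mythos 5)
The paper gives no proof of this proposition; it simply asserts the statement as ``a consequence of [3, Proposition 2.5]'' and moves on, so there is no proof of record to compare against. Your proposal supplies the missing derivation and it is correct: triviality of the underlying bundle makes the Mumford weight $\mu^{L_\chi}(x,\lambda)$ reduce to the character pairing $(\chi,\lambda)$, since $\lambda$ acts on the fibre over the limit point $x_0$ through $\chi\circ\lambda$, and substituting this into the affine Hilbert--Mumford criterion gives the statement. Two remarks worth making explicit, though. First, if [3] is King's 1994 paper on moduli of representations (which the numbering and phrasing strongly suggest), then Proposition 2.5 there is already stated directly in terms of $\langle\chi,\lambda\rangle$ for a reductive $G$ with a character-twisted trivial linearization, so the ``compute the Mumford weight'' step you describe is in effect already packaged into the citation; the only specialization the paper needs is from reductive $G$ to the subtorus $T^P\subset(\bbC^*)^n$. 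Second, in that reference the stability clause is stated with a caveat about the kernel $\Delta$ of the action (equality $\langle\chi,\lambda\rangle=0$ is permitted precisely for $\lambda$ in $\Delta$); the clean form ``$(\chi,\lambda)>0$ for every nontrivial $\lambda$ with limit existing'' used here depends on the hypothesis that $T^P$ acts as a genuine subtorus of $(\bbC^*)^n$, so that the kernel is trivial. You correctly identify the sign convention as the main bookkeeping issue; the kernel-triviality observation is the other small thing to say out loud, and it is also what makes the downstream convexity statement in \cref{cor:Mum} come out cleanly.
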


We have the immediate:
\begin{corollary} \label{cor:Mum}. Notation as in 
\cref{prop:Mum}. The set of characters for which $x$ is stable is the set of integer points of a convex cone in $P_{\mathbb{R}}$. 
\end{corollary}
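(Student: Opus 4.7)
The plan is to read off the cone structure directly from Mumford's criterion. By \cref{prop:Mum}, the set of characters for which $x$ is stable equals
$$
\{\chi \in P : \langle \chi, \lambda \rangle > 0 \text{ for every } \lambda \in S\},
$$
where $S$ denotes the set of nontrivial $1$-parameter subgroups $\lambda$ of $T^P$ such that $\lim_{t \to 0} \lambda(t) \cdot x$ exists in $\mathbb{C}^n$. Each condition $\langle \chi, \lambda \rangle > 0$ is an open half-space condition linear in $\chi$, so the intersection over all $\lambda \in S$ is automatically a (possibly empty) open convex cone in $P_{\mathbb{R}}$, and the corollary follows by intersecting with the lattice $P$.

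To give a more explicit description of the cone (not strictly necessary, but natural): let $e_1, \ldots, e_n \in P$ be the weights of the coordinate functions on $\mathbb{C}^n$ inherited from the ambient $(\mathbb{C}^*)^n$-action, and write $I := \{i : x_i \neq 0\}$. Along a $1$-parameter subgroup $\lambda$ of $T^P$, the $i$-th coordinate of $\lambda(t) \cdot x$ is $t^{\langle e_i, \lambda \rangle} x_i$, so the limit as $t \to 0$ exists precisely when $\langle e_i, \lambda \rangle \geq 0$ for all $i \in I$. Thus $S$ consists of the nonzero lattice points in the rational polyhedral cone $\sigma = \{\lambda \in N_{\mathbb{R}} : \langle e_i, \lambda \rangle \geq 0 \text{ for all } i \in I\}$ in the cocharacter lattice $N$ of $T^P$, and the set of stable characters is the intersection of $P$ with the relative interior of the dual cone $\sigma^\vee \subset P_{\mathbb{R}}$.

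There is no substantive obstacle here: the content is entirely contained in \cref{prop:Mum}, and the only observation needed is that strict positivity of $\langle \chi, \lambda \rangle$ on a fixed set of $\lambda$'s is preserved under positive linear combinations in $\chi$.
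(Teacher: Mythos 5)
Your first paragraph is exactly the intended argument; the paper treats \cref{cor:Mum} as immediate from \cref{prop:Mum} and gives no separate proof, and the observation you make — that the stable-character set is cut out from $P$ by the open half-space conditions $\langle\chi,\lambda\rangle>0$, hence equals $P\cap K$ for a convex cone $K\subset P_{\mathbb R}$ — is precisely the content being declared immediate. One small caution on the optional second paragraph: the identification of the stable set with $P\cap\operatorname{relint}(\sigma^\vee)$ breaks down when $\sigma$ has a nontrivial lineality space (equivalently, when $x$ has a positive-dimensional stabilizer), since then the stable set is empty while $\operatorname{relint}(\sigma^\vee)$ may still contain $0$; but as you noted that paragraph is not needed, and the first paragraph stands on its own.
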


\begin{proposition} \label{prop:adj} 
  Say $\sigma_1,\sigma_2 \in \Sigma$ are adjacent maximal chambers sharing a codimension 1 wall $\tau$. Pick $\chi_i \in \sigma_i^{\circ}, i = 1,2$ and $\chi \in \tau^{\circ}$. Then $A^{s}(\sigma_1) \cap A^s(\sigma_2) \subset A^s(\tau)$, and the natural map $A// L_{\chi_i} \rightarrow A//L_{\chi}$ is an isomorphism away from the upper bound from \cref{prop:bound}.
  \end{proposition}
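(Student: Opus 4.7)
The plan is to handle the two assertions in turn, each by reducing to Mumford's numerical criterion and elementary properties of the VGIT fan. Throughout, I assume the standing hypothesis $E^{\gp} = P$ of \cref{sec:GITbasics}, so that by \cref{prop:interiors} semistability coincides with stability on the interior of any maximal chamber of $\Sigma$.

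For the containment $A^{s}(\sigma_1) \cap A^s(\sigma_2) \subset A^s(\tau)$, fix a closed point $x$ in the intersection. By \cref{cor:Mum} the set $S(x) \subset P$ of characters for which $x$ is stable is the set of integer points of a convex cone $\overline{S}(x) \subset P_{\bbR}$. Since the stable locus depends only on the chamber of $\Sigma$, $\overline{S}(x)$ contains both open cones $\sigma_1^{\circ}$ and $\sigma_2^{\circ}$. Because $\sigma_1$ and $\sigma_2$ are adjacent along $\tau$, any $\chi \in \tau^{\circ}$ can be written as a convex combination of points arbitrarily close to $\chi$ in $\sigma_1^{\circ}$ and $\sigma_2^{\circ}$ respectively (for instance, take $\chi_i' = \chi \pm \epsilon v$ for $v$ transverse to $\tau$ pointing into $\sigma_1$, and small $\epsilon > 0$). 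Hence $\chi \in \overline{S}(x)$, and since $\chi \in P$, we conclude $\chi \in S(x)$, i.e. $x \in A^s(\tau)$.

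For the isomorphism statement, let $I \subset A$ be the homogeneous ideal of \cref{prop:bound} generated by elements of $A_{k\chi_1} \cap A \cdot A_{m\chi_2}$ over $k,m \geq 1$, let $U \subset A//L_{\chi_1}$ be the complement of its vanishing locus, and let $\tilde U \subset A^{\mss}(\sigma_1)$ be its preimage. For any $x \in \tilde U$, some $s = \sum_i a_i b_i$ with $b_i \in A_{m\chi_2}$ satisfies $s(x) \neq 0$, which forces $b_i(x) \neq 0$ for some $i$; thus $x$ is semistable for $\chi_2$, and $\tilde U \subset A^{\mss}(\sigma_1) \cap A^{\mss}(\sigma_2)$. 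By \cref{prop:interiors} this intersection equals $A^s(\sigma_1) \cap A^s(\sigma_2)$, and the first part of the proposition places $\tilde U$ inside $A^s(\tau)$. Since $T^P$-orbits in $A^s(\tau)$ are closed and $\tilde U$ is open and $T^P$-invariant, $U = \tilde U/T^P$ is a geometric quotient and embeds as an open subvariety of $A^s(\tau)/T^P \subset A//L_{\chi}$. By the universal property of categorical quotients, this open embedding coincides with the restriction of the natural morphism $A//L_{\chi_1} \to A//L_{\chi}$ to $U$, so this morphism is an isomorphism onto its image there. The argument for $i=2$ is identical by symmetry.

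The only mildly delicate step is the convexity argument in the first part: one must verify that $\chi \in \tau^{\circ}$ actually lies in the convex hull of (suitable nearby) points of $\sigma_1^{\circ}$ and $\sigma_2^{\circ}$. This is where adjacency along $\tau$ and the fact that $\chi$ sits in the relative interior of the common wall are used; everything else is formal VGIT combined with \cref{prop:interiors} to pass between semistability and stability on maximal chambers.
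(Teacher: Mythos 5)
Your proof is correct and follows essentially the same route as the paper: convexity of the stable locus (via Mumford's criterion, \cref{cor:Mum}) for the first containment, and then passing to the preimage of the complement of the upper bound in \cref{prop:bound}, showing it sits in $A^s(\sigma_1) \cap A^s(\sigma_2) \subset A^s(\tau)$, and invoking uniqueness of the (geometric, categorical) quotient to get the isomorphism. Your write-up is somewhat more explicit than the paper's---you spell out the convex-hull step for adjacent chambers and the reduction from a nonvanishing element of $A_{k\chi_1}\cap A\cdot A_{m\chi_2}$ to a nonvanishing element of $A_{m\chi_2}$---but the decomposition and the key lemmas used are the same.
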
 
\begin{proof}
     As in the proof of \cref{prop:interiors}, we embed $\Spec(A)$ equivariantly in some $\bbA^n$ for which $T^P \subset (\bbG_m)^n$ is a subtorus. Now by Mumford's criterion, the set of characters for which a fixed point is stable is convex, and we obtain the first assertion.

     For the second assertion: notice $A^s(\sigma_1) \cap A^s(\sigma_2)$ is open, contained in the stable loci for $\sigma_1, \sigma_2,$ and $\tau$. For any character $\chi$, the quotient $A^{\text{ss}}(L_{\chi}) \rightarrow A//L_{\chi}$ is an open map on its restriction to the stable locus of $\chi$. By uniqueness of categorical quotients, the images of $A^s(\sigma_1) \cap A^s(\sigma_2)$ in all $A//L_{\chi_i}$, $A//L_{\chi}$ are isomorphic open subschemes, mapped isomorphically onto one another by the various birational maps. As we observed in Proposition 2, the complement of the upper bound is contained in $A^s(\sigma_1) \cap A^s(\sigma_2)$. 
\end{proof}

\section{The period point} \label{sec:pp}
\cref{thm:modular} contains various modularity statements for
the double mirror $A_C$. In this section we deduce one, essentially for
free, from general properties of the mirror construction, and the main
result of \cite{LZ23}.

  By \cite{LZ23},  $\bbV \to T_{\Pic(X)}$ is a small resolution of the mirror
  family $V \to T_{\Pic(X)}$. So each quniversal family (see \cref{sec:quf}) gives
  a fibrewise compactification
  $\bbV \subset \bbX$. Let $\obbX \supset \bbX$ be an snc compactification.
  Recall we have the weights $w:\Sk(V)(\bbZ) \to \Pic(X)$,
  see \cref{def:skVweight}.
  The following is clear from the definition:
  
  \begin{lemma} We have a natural identification $w^{-1}(0) = \Sk(U)$.
  \end{lemma}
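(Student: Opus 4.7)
My plan is to identify $w^{-1}(0) \subset \Sk(V)$ with the essential skeleton of the Berkovich generic fibre of $V \to T_{\Pic(X)}$, and then to invoke \cite{LZ23} to recognise that fibre as a deformation of $(X,E)$ with interior $U$.

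First, I would unwind \cref{def:skVweight}: a point $v \in \Sk(V)$ satisfies $w(v) = 0$ iff $(z^C)^{\trop}(v) = 0$ for every $C \in A_1(X,\bbZ)$, i.e.\ iff $v$ is trivial on the pullback of every character of $T_{\Pic(X)}$. Equivalently, after choosing a fibrewise snc compactification $V \subset \bbV \subset \bbX \subset \obbX$ sitting over a toric compactification $\widebar{T}_{\Pic(X)}$ of the base, such $v$ are exactly those divisorial valuations of $V$ coming from a \emph{horizontal} component of $\obbX \setminus V$ (one whose image dominates $T_{\Pic(X)}$) as opposed to a \emph{vertical} component lying in fibres over the toric boundary of $\widebar{T}_{\Pic(X)}$. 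Thus $w^{-1}(0)$ is exactly the subcomplex of $\Sk(V)$ spanned by horizontal strata, i.e.\ the relative essential skeleton of $V \to T_{\Pic(X)}$ over the open torus.

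Next I would apply \cite{LZ23}, which identifies $V \to T_{\Pic(X)}$ with the universal family of smooth Looijenga pair interiors deformation equivalent to $U$. We may therefore choose $\bbV \subset \bbX$ so that each fibre over $T_{\Pic(X)}$ is a smooth deformation of the snc pair $(X,E)$; in particular the restriction of the horizontal boundary $(\obbX \setminus V)_{\hor}$ to any fibre is isomorphic to the boundary cycle $E \subset X$. The subcomplex of $\Sk(V)$ spanned by the horizontal strata is then canonically the dual complex of $E \subset X$ equipped with its integer piecewise affine structure, which is by definition $\Sk(U)$.

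Combining the two steps yields $w^{-1}(0) = \Sk(U)$. As the authors note, the identification is really built into the definition of $w$: the only content is that horizontal-vs-vertical decomposition of boundary strata in $\obbX$ is precisely encoded by whether a stratum's valuation lies in $w^{-1}(0)$ or not, so once one knows via \cite{LZ23} that horizontal strata assemble into a family of copies of $E$, there is nothing more to check. The only delicate point worth flagging is that the equality should respect integer piecewise affine structures, not merely underlying sets of $\bbZ$-points; this is automatic because both sides arise as the dual complex of the same snc configuration (the restriction of $(\obbX \setminus V)_{\hor}$ to any fibre).
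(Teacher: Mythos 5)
The proposal identifies $w^{-1}(0)$ with the subcomplex of $\Sk(V)$ spanned by horizontal boundary rays of a fibrewise compactification $\bbV \subset \bbX \subset \obbX$, and then uses \cite{LZ23} to recognise the fibrewise (horizontal) boundary as the cycle $E$; this is exactly the reading intended by the paper, where \cite{LZ23} is invoked in the sentences immediately preceding the lemma and the claim is then declared ``clear from the definition.'' So the route is the same and the proof is correct, but two points are worth tightening. First, your claim that $w^{-1}(0)$ is \emph{exactly} the subcomplex spanned by horizontal rays needs one extra observation: on a cone of $\Sk(V)$ containing both horizontal and vertical rays, $w$ is linear, sends horizontal rays to $0$, and sends the vertical rays into the relative interior of a single strongly convex toric cone of the compactified base, so there is no cancellation and $w^{-1}(0)$ really is the face spanned by the horizontal rays. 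Second, ``isomorphic to the boundary cycle $E \subset X$'' should be ``an snc anticanonical cycle deformation equivalent to $E$, with the same marked dual complex and the same self-intersection sequence''; it is precisely the marking of the boundary components in the quniversal family that makes the induced isomorphism of integer piecewise affine cones $w^{-1}(0) \cong \Sk(U)$ canonical rather than merely abstract, and this is the content of ``natural'' in the lemma.
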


  By the lemma we have a unique isomorphism of free $L_{\obbV}$ modules
   $$
    A_{w^{-1}(0) \subset \Sk(V),L_{\obbX}} =
    A_{\Sk(U),L_X} \otimes_{L_X} L_{\obbV}
    $$

    identifying the bases via $w^{-1}(0) = \Sk(U)(\bbZ)$.

    \begin{proposition} \label{prop:ext} The isomorphism of free modules immediately
      above is an isomorphism of $L_{\obbV}$-algebras.
    \end{proposition}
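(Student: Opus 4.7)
The strategy is to invoke the general-orbit interpretation of weight-$0$ subalgebras in the torus-action formalism of \cref{sec:ta}, together with the modular description of the mirror family supplied by \cite{LZ23}. The preceding lemma already furnishes a canonical identification of bases $w^{-1}(0) = \Sk(U)(\bbZ)$, so the task reduces to matching the $L_{\obbV}$-linear structure constants on either side.

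The core step is to apply the basic convexity \cite[24.7]{KY22} to the $T_{A_1(X,\bbZ)}$-action on $\Spec A_{\Sk(V), L_{\obbX}}$ with the trivial linearization. The corresponding invariant subring is exactly the weight-$0$ part $A_{w^{-1}(0)\subset \Sk(V), L_{\obbX}}$, and by \cite[24.7]{KY22} this invariant subring is (up to coefficient base change) the mirror algebra of the restriction of $V \to T_{\Pic(X)}$ to a general fiber $V_t$, with its induced boundary and compactification coming from $\bbV \subset \obbX$. Concretely, its structure constants are naive counts of punctured Berkovich disks in $V_t$, with coefficients in $L_{\obbV}$ recording effective classes of the fiberwise compactification.

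I would then invoke \cite{LZ23} to identify the general fiber of $V \to T_{\Pic(X)}$ with $U$ itself: by that result $V \to T_{\Pic(X)}$ coincides with the universal family of \cite[4.24]{MLPv1}, so that a general fiber is a smooth affine log CY deformation equivalent to $U$, and — at the distinguished points of $T_{\Pic(X)}$ coming from the splitting chosen in \cref{const:mirror} — is isomorphic to $U$ with its maximal boundary inherited from $X$. The fiberwise snc compactification coming from $\bbX \subset \obbX$ then agrees, up to SQMs (\cref{rem:sqm}) and the standard base-change compatibility for coefficients (\cref{rem:coeff}), with the snc compactification $U \subset X$ used to define $A_{\Sk(U), L_X}$. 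Assembling these identifications yields the desired isomorphism of $L_{\obbV}$-algebras.

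The main obstacle will be tracking the compatibility of basis labelings. The identification $w^{-1}(0) = \Sk(U)(\bbZ)$ coming from the lemma (via restriction of weight-$0$ divisorial valuations on $V$ to a generic fiber and thence to $U$) must agree with the identification $\Sk(V_t) = \Sk(U)$ provided by the isomorphism $V_t \cong U$. This reduces to the expected compatibility of essential skeleta under restriction in a smooth log CY family with maximal boundary, but it must be traced carefully against the splitting and coefficient choices of \cref{const:mirror} so that the tensor product $\otimes_{L_X} L_{\obbV}$ is realized correctly and the multiplication rules on the two sides genuinely coincide rather than merely agreeing after some further automorphism of the basis.
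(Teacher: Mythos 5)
Your proposal follows the paper's own route: combine the basic convexity statement from \cite{KY22} (you cite [24.7], the paper cites [20.1]; both are versions of the convexity principle for restricting to orbits/subtori, and for the weight-$0$ part they both say the same thing, namely that one may compute with disks in a general fiber of $V \to T_{\Pic(X)}$) with the identification of $V \to T_{\Pic(X)}$ as the universal family from \cite{LZ23}. That is exactly the paper's two-line proof, so the skeleton of your argument is correct.

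Where your write-up goes somewhat astray is in the detour through ``distinguished points of $T_{\Pic(X)}$ coming from the splitting,'' where you try to arrange $V_t \cong U$. The convexity statement does not let you choose such a special fiber --- it tells you the structure constants can be read off from disks in a \emph{general} fiber $V_t$, which is only deformation-equivalent to $U$ and is not $U$ itself. The way the argument actually closes is not by exhibiting a fiber isomorphic to $U$, but by using modularity in both directions: $A_{\Sk(U), L_X}$ is, by \cite{LZ23}, the coordinate ring of the universal family $V \to T_{\Pic(X)}$; and the mirror algebra of a general fiber $V_t$, which by \cref{rem:coeff} only depends on the deformation class of $V_t$, is again (by \cite{LZ23} applied to $V_t$) the universal family of $V_t$, hence of $U$. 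This forces the structure constants on the two sides to agree, with the $\otimes_{L_X} L_{\obbV}$ simply recording the ambient compactification as in \cref{rem:coeff}. The ``main obstacle'' you flag at the end --- reconciling the two basis labelings --- thus dissolves once one takes the modular identification from \cite{LZ23} at face value rather than trying to pick out a special fiber.
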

    \begin{proof} This follows immediately from the basic convexity
      \cite[20.1]{KY22} and the main result of \cite{LZ23}, identifying
      $V \to T_{\Pic(X)}$ with the universal family \cite[4.24]{MLPv1}.
    \end{proof}

    We note by the main result of \cite{LZ23}, that the GHKI family
    (restricted to $T_{\Pic(X)}$) 
    is modular, the universal family of \cite[4.24]{MLPv1}.

        Let $W$ be an effective ample Weil
    divisor with support $E$ (see \cite[4.20]{MLPv1}). There is a corresponding monomial in
    the $\theta_{Z \in \Sk(V)}$, homogeneous of $\Pic(X)$ weight $W$,
    which we indicate by $\pi$. 
    $\pi$ is in the $W$ weightspace of $A_{C}$, and by \cref{prop:action}
    generates
    the ideal of the boundary and the localisation at $\pi$
    is $A_{\Sk(V)}$. So we have:

    \begin{corollary} \label{cor:LZ}
      $$
      (A_{w^{-1}(\bbN \cdot W) \cap C,L_{\obbX}})_{(\pi)} =
      A_{\Sk(U),L_{X}} \otimes_{L_{X}} L_{\obbX}.
      $$
    \end{corollary}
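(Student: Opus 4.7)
The plan is to identify both sides of the claimed equality with the weight-$0$ subspace of $A_{\Sk(V),L_{\obbX}}$ under the $\Pic(X)$-grading, and so reduce the statement to \cref{prop:ext}. That proposition identifies the right hand side $A_{\Sk(U),L_X} \otimes_{L_X} L_{\obbX}$ with $A_{w^{-1}(0) \subset \Sk(V),L_{\obbX}}$, the weight-$0$ subspace of the mirror algebra. I read the subscript $(\,\cdot\,)_{(\pi)}$ as the degree-$0$ part of the $\pi$-localization (the standard Proj-chart notation, where the grading is the induced $\bbN \cdot W \subset \Pic(X)$-grading and $\deg \pi = W$). Thus the target statement becomes
\[
\bigl(A_{w^{-1}(\bbN \cdot W) \cap C,L_{\obbX}}[\pi^{-1}]\bigr)_0 \;=\; A_{w^{-1}(0) \subset \Sk(V),L_{\obbX}}.
\]

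The inclusion $\subseteq$ is formal. By the referenced \cref{prop:action}, $A_{C,L_{\obbX}}[\pi^{-1}] = A_{\Sk(V),L_{\obbX}}$ with $\pi$ a unit on the right. The weight-preserving inclusion $A_{w^{-1}(\bbN \cdot W) \cap C,L_{\obbX}} \hookrightarrow A_{C,L_{\obbX}}$ then sends the degree-$0$ piece of its $\pi$-localization into the weight-$0$ eigenspace of $A_{\Sk(V),L_{\obbX}}$.

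The substantive direction is $\supseteq$: for every $v \in w^{-1}(0)(\bbZ)$ I need to exhibit $n \geq 0$ such that $\pi^n \theta_v$ lies in $A_{w^{-1}(nW) \cap C,L_{\obbX}}$, so that $\theta_v = (\pi^n \theta_v)/\pi^n$ is realized in the homogeneous localization. Since $\pi^n \theta_v$ automatically has $\Pic(X)$-weight $nW$, the real content is that every theta function $\theta_u$ appearing with nonzero coefficient in its expansion satisfies $u \in C$, i.e.\ $\theta_Z^{\trop}(u) \geq 0$ for every boundary component $Z \subset E$. This is to be extracted from the basic convexity of theta function multiplication \cite[15.6, 15.8]{KY20}: $\pi$ is a monomial in the boundary theta functions $\theta_{Z \in \Sk(G)}$ with $\Pic(X)$-weight $W$ ample and supported on all of $E$, so multiplying by $\pi$ strictly increases $\theta_Z^{\trop}$ on the support in every $Z$-coordinate. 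Iterating this $n$ times produces a lower bound on $\theta_Z^{\trop}(u)$ of the form $\theta_Z^{\trop}(v) + n \cdot c_Z$ with $c_Z > 0$, which for $n$ large is non-negative simultaneously for all $Z \subset E$, forcing $u \in C$.

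The main obstacle is the positivity estimate in the last paragraph: one must convert the ampleness of $W$ and the fact that $W$ is supported on all of $E$ into a strictly positive increment to $\theta_Z^{\trop}$ at every single boundary component, and then combine this with the basic convexity framework of \cite{KY20} to bound the support of $\pi^n \theta_v$. Everything else is formal bookkeeping about homogeneous localization together with an appeal to \cref{prop:ext}.
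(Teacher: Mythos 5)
Your interpretation of the notation $(\,\cdot\,)_{(\pi)}$ as the degree-zero part of the $\pi$-localization in the induced $\bbN\cdot W$-grading is correct (the text immediately after the corollary confirms it: this is the coordinate ring of the Proj-chart $A_C//W \setminus \partial$), and your overall route — reduce to the weight-zero piece of $A_{\Sk(V),L_{\obbX}}$ via localization and then invoke \cref{prop:ext} — is exactly what the paper intends: the paper's argument is contained in the sentence just before the statement, which cites \cref{prop:action} for the localization identity $A_C[\pi^{-1}] = A_{\Sk(V)}$.

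Two small points where your write-up is off from the paper, though neither breaks the argument. First, a notational slip: $\pi$ is a monomial in the $\theta_{Z\in\Sk(V)}$ (elements of $A_{\Sk(V)} = \cO(V^\vee)$), not in the $\theta_{Z\in\Sk(G)}$ (elements of $A_{\Sk(G)} = \cO(V)$); only the former act by multiplication on $A_{\Sk(V)}$. Second, and more substantively, your ``main obstacle'' --- controlling the theta-function expansion of $\pi^n\theta_v$ --- dissolves completely once you use \cref{prop:action}(1): since $\pi = \theta_e$ for $e = \sum_{Z\subset E} a_Z[Z] \in \bbZ_E \subset \Sk(V)(\bbZ)$ with all $a_Z>0$ (because $W$ is supported on all of $E$), one has $\pi^n\theta_v = \theta_{v+ne}$, a \emph{single} theta function. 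Then \cref{prop:action}(2)--(3) give $\theta_{Z\in\Sk(G)}^{\trop}(v+ne) = \theta_{Z\in\Sk(G)}^{\trop}(v) + n a_Z$, so $v+ne \in C$ for $n\gg 0$ with no appeal to convexity of multiplication beyond what \cref{prop:action} already supplies. This is precisely the content of the ``moreover'' clause of \cref{prop:action}, which is what the paper cites.
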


    Note $A_{w^{-1}(\bbN \cdot W)}$ is, by definition, the coordinate ring
      of the GIT quotient $A_C//W$. Thus by \cref{prop:action} the
      localization  $(A_{w^{-1}(\bbN \cdot W) \cap C,L_{\obbX}})_{(\pi)}$
      is the coordinate ring
      $$
      \cO(A_{C}//W \setminus \partial) = (A_{w^{-1}(\bbN \cdot W) \cap C,L_{\obbX}})_{(\pi)}.
      $$

      We have:
      
      \begin{corollary} \label{cor:versal} For any snc compactification $\bbX \subset \obbX$,
        $(A_C//W \setminus \partial) \to T_{\Pic(\obbX)}$ is a versal
        deformation of $U \coloneqq X \setminus E$.
      \end{corollary}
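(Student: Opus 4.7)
The plan is essentially to combine \cref{cor:LZ} with the main result of \cite{LZ23}, after which the statement follows almost formally. First, \cref{cor:LZ} gives the ring isomorphism $\cO(A_C // W \setminus \partial) = A_{\Sk(U), L_X} \otimes_{L_X} L_{\obbX}$, which geometrically identifies the family $(A_C // W \setminus \partial) \to T_{\Pic(\obbX)}$ with the pullback of the mirror family $V = \Spec(A_{\Sk(U), L_X}) \to T_{\Pic(X)}$ along the natural torus surjection $T_{\Pic(\obbX)} \twoheadrightarrow T_{\Pic(X)}$ induced by the inclusion $\Pic(X) = \Pic(\bbX) \hookrightarrow \Pic(\obbX)$ (where $\obbX$ adds boundary divisors, enlarging the Picard group).

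Second, I would invoke the main result of \cite{LZ23}: the mirror family $V \to T_{\Pic(X)}$ is isomorphic to the universal family of \cite[4.24]{MLPv1}, which is modular for Looijenga pairs deformation equivalent to $(X, E)$ with a marking of $\Pic$ and the boundary components, with at worst du Val singularities off the boundary. Restricting to the interior of each fiber produces a family $\bbU \to T_{\Pic(X)}$ which is a (in fact universal, in the marked sense, hence) versal deformation of $U = X \setminus E$. Finally, since $T_{\Pic(\obbX)} \twoheadrightarrow T_{\Pic(X)}$ is a smooth surjective split torus quotient, the pullback of a versal deformation along such a morphism remains versal, yielding the assertion.

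The genuine work is not in this corollary itself but in the inputs: \cref{cor:LZ} uses the basic convexity from \cite[20.1]{KY22} together with the identification of boundary locus with vanishing of $\pi$ (via \cref{prop:action}), while the modularity of $V \to T_{\Pic(X)}$ is the deep result of \cite{LZ23}. The only conceptual point to double-check in writing up the proof is that $\pi$-localization on the GIT quotient side corresponds exactly to restricting the family of Looijenga pairs to their affine interiors, i.e.\ that the theta monomial $\pi$ cuts out precisely the family-wise boundary $\bbE \subset \bbX$; this is ensured by the fact that $\pi$ is homogeneous of weight $W$ with $\supp(W) = E$ and by the characterization in \cref{prop:action} of $\pi$ as generator of the boundary ideal.
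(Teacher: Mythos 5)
Your proof is correct and takes essentially the same route as the paper: the paper's own proof of \cref{cor:versal} is literally ``Immediate from \cref{cor:LZ} and the modularity of $V \to T_{\Pic(X)}$,'' and you have simply unfolded that implication (the ring identification of \cref{cor:LZ}, the \cite{LZ23} modularity of $V \to T_{\Pic(X)}$, and the fact that pullback along the smooth surjective torus morphism $T_{\Pic(\obbX)} \twoheadrightarrow T_{\Pic(X)}$ preserves versality). The only quibble is notational: you write $V = \Spec(A_{\Sk(U),L_X})$, whereas the paper reserves $V$ for $\Spec(A_{\Sk(G)})$ and it is the family over $T_{\Pic(X)}$ obtained from it that \cref{cor:LZ} matches with $A_{\Sk(U),L_X}$ --- this does not affect the argument.
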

      \begin{proof} Immediate from \cref{cor:LZ} and the modularity of
        $V \to T_{\Pic(X)}$.
        \end{proof} 

        \begin{definition} \label{def:gen}
          By the {\it generic locus} in the mirror family
        $A_{C,L_{\obbV}}$ we mean the locus where the fibres of
          $(A_C//W \setminus \partial) \to T_{\Pic(\obbX)}$ are smooth.
          \end{definition}

 Next we apply \cref{sec:GITbasics} to $A_C$ with its $\Pic(X)$ grading.

\section{VGIT and toric models for $A_C$} 

        \begin{remark} \label{rem:inductions} 
        In studying $A_C$, and its GIT quotients, we have one kind of
        induction we can use, induction
        on the charge. But we can also consider the quotient
        $A_C/I_Z$ for irreducible component $Z \subset E$, here there is
        a reduction in dimension: By 
        basic convexity \cite[24.7]{KY22}, we can compute the structure constants for
        this quotient using disks contained in a fixed general fibre
        of $\theta_{Z \in \Sk(G)}$ (roughly $A_C/I_Z$ is mirror to a general
        fibre of $\theta_{Z \in \Sk(G)}$)
        with further restrictions as in \cref{prop:bbY},
        when we restrict the weights. For example 
        $A_{C,p^*(\Nef(\oX)),L_{\obbV}}/I_Z$ depends only on a generic fibre
        of $\theta_{Z \in \Sk(G)}$ on $\bbY$ in \cref{prop:bbY}. 
        On this generic fibre the deformation is isomorphic
        to the toric case, indeed by the very construction of the deformation
        in \cite{GHKIv2}, which gives 
        local toric charts (covering all but the $0$ stratum of the
        vertex). We apply this in a simple way next:
      \end{remark}

        \begin{lemma} \label{lem:P1}
          Let $L \in \Nef(X)$. For each irreducible $Z \subset E$
          $(A_C/I_Z)//L$ is a trivial family of $\bbP^1$s.
        \end{lemma}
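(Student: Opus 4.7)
First, I would apply \cref{rem:inductions} to reduce the study of $A_C/I_Z$ to a toric computation. By the basic convexity \cite[24.7]{KY22}, the structure constants for $A_C/I_Z$ are computed using punctured Berkovich disks contained in a generic fibre of $\theta_{Z \in \Sk(G)}$ on $\bbY$, and by \cref{rem:inductions} the mirror deformation on such a generic fibre is isomorphic to the toric case via the explicit local toric charts of \cite{GHKIv2}.

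In the toric model, $Z \cong \bbP^1$ is a toric boundary divisor, and $A_C/I_Z$ corresponds (up to coefficients) to the restriction of the Cox ring of the toric pair to $Z$, with $\Pic(X)$-grading induced by the restriction map $\Pic(X) \to \Pic(Z) \cong \bbZ$. The GIT quotient by $T_{A_1(X,\bbZ)}$ at a nef linearization $L$ then yields $\Proj \bigoplus_{n \geq 0} H^0(Z, n L|_Z) \cong \bbP^1$, using that $L|_Z$ has non-negative degree. Since the kernel of $\Pic(X) \to \Pic(Z)$ acts trivially on this coordinate ring, $(A_C/I_Z)//L$ factors as the product $\bbP^1 \times T_{\Pic(X)} \to T_{\Pic(X)}$, i.e.\ a trivial $\bbP^1$-family.

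Triviality over the full base $T_{\Pic(X)}$, rather than merely over one generic fibre of $\theta_Z$, should follow from the canonical and $T_{\Pic(X)}$-equivariant nature of the toric charts of \cite{GHKIv2}: the identification of $A_C/I_Z$ with the toric model holds uniformly across $T_{\Pic(X)}$. The main obstacle I expect is verifying that this identification is compatible with the $\Pic(X)$-grading and with the GIT linearization uniformly over the base; this is essentially the content of \cref{prop:bbY} combined with \cref{rem:inductions}, and boils down to checking that the formal local model of the deformation over the closed stratum of $\oT = \TV(\Nef(\oX))$ splits as a product in the directions transverse to the $\bbP^1$ being constructed.
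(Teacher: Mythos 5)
Your approach matches the paper's (very terse) proof: reduce to the toric case via \cref{rem:inductions} and the local toric charts of \cite{GHKIv2}, then finish by the explicit description of the toric Cox ring restricted to a boundary $\bbP^1$. The paper simply says ``the result is easy in the toric case''; you spell out what that means, which is a reasonable expansion.

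Two small remarks. First, you conclude $\Proj\bigoplus_{n\geq 0}H^0(Z,nL|_Z)\cong\bbP^1$ ``using that $L|_Z$ has non-negative degree,'' but non-negative degree is not enough --- if $\deg L|_Z=0$ that $\Proj$ is a point. The lemma as stated in the paper also says only $L\in\Nef(X)$, but in every place it is invoked $L$ has positive degree on each component of $E$ (it is ample, or pulled back from an ample by a contraction that is an isomorphism near the boundary), so one should read the positivity into the hypothesis; you should at least flag it. Second, the worry in your last paragraph is not a genuine gap: \cref{rem:inductions} together with \cref{prop:bbY} gives an isomorphism of $\Pic(X)$-graded $L_{\obbX}$-algebras between $A_C/I_Z$ and its toric analogue (the structure constants, valued in $L_{\obbX}$, agree because the only disks contributing are governed by the toric local model along the $1$-stratum of the vertex). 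The GIT quotient is determined functorially by the graded ring, so the family structure and its triviality are carried along automatically; there is nothing additional to check ``transverse to the $\bbP^1$.''
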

        \begin{proof} $(A_C/I_Z)//L$ is purely toric, as noted in \cref{rem:inductions}. The result is easy in the toric case. \end{proof}

        \begin{proposition} \label{prop:lpsings}
          Let $p: X \to \oX$ be the blowdown of a set
          of disjoint internal $-1$ curves, and $L \coloneqq p^*(A)$ for
          $A \in \Ample(\oX)$. Then for each fibre $(X',E')$ of
          $(A_C//W,\partial) \to T_{\Pic(\obbX)}$ (for any snc compactification
          $\bbX \subset \obbX$) the following hold:
          \begin{enumerate}
          \item $K_{X'} + E'$ is trivial and log canonical.
          \item $(X',E')$ is a Looijenga pair with at worst du Val
            singularities off $E'$.
          \end{enumerate}
           \end{proposition}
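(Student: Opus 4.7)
My plan is to assemble (1) and (2) by combining the versal deformation statement for the open complement of the boundary with an analysis of the boundary combinatorics, and then deducing triviality and log canonicity of $K_{X'} + E'$ on each fibre.

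First, \cref{cor:versal} gives directly that $(A_C//W \setminus \partial) \to T_{\Pic(\obbX)}$ is a versal deformation of the smooth affine log CY $U$. Flatness together with smoothness of $U$ forces each fibre of the restriction to have at worst du Val singularities (and to be smooth in codimension one), proving (2) on the complement of $E'$; moreover the log CY structure gives triviality of $K_{X'}$ on $X' \setminus E'$.

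Second, I would identify the boundary. The boundary ideal of $\partial \subset A_C$ is generated by the $\theta_{Z \in \Sk(G)}$ for $Z \subset E$, cutting out the faces $Z^\perp$ of $C$ in the sense of \cref{const:boundary}. Each component $\partial_Z \subset A_C // W$ corresponds to the quotient $A_C/I_Z$, and by \cref{lem:P1} the GIT quotient $(A_C/I_Z)//W$ is a trivial family of $\bbP^1$'s over $T_{\Pic(\obbX)}$. The combinatorics of how the faces $Z^\perp$ meet --- two faces intersect in a full face exactly when the corresponding $Z_1, Z_2 \subset E$ share a node --- follows from the basic convexity \cite[15.8]{KY20} applied to $A_C/(I_{Z_1} + I_{Z_2})$, and forces the $\bbP^1$-components to glue fibrewise into a cycle combinatorially equivalent to $E$. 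Hence $E' \subset X'$ is a cycle of rational curves on the rational projective surface $X'$, so $(X',E')$ is a Looijenga pair.

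Third, for triviality of $K_{X'} + E'$ and log canonicity, I would apply \cref{prop:dvres} to the one-parameter subgroup associated with $W$: it gives log canonicity of the total space of the restriction of the GHKI mirror family to the corresponding orbit closure, with every log canonical centre a stratum of the vertex $\VVert(X,E)$. Since the fibres of $(A_C//W, \partial) \to T_{\Pic(\obbX)}$ are obtained from this family by residual GIT quotient and base change, log canonicity passes to the fibres via inversion of adjunction along $\partial$. Triviality of $K_{X'} + E'$ then follows from triviality of $K_{X'}$ on the open part together with the adjunction computation $\deg((K_{X'} + E')|_{E'_i}) = 0$ on each rational component $E'_i \cong \bbP^1$, using that the intersection pattern deforms flatly from that of $E \subset X$. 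The main obstacle is this last step: cleanly transferring log canonicity from the total space of the mirror family to the GIT quotient fibres. I expect the cleanest path is via the local toric charts at the vertex from \cite{GHKIv2} and the moving-worm picture of \cref{rem:mw}, which exhibit $(A_C//W, \partial)$ as locally toric along $\partial$ and hence manifestly log canonical there.
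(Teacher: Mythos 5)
Your plan does not match the paper's argument, and there are genuine gaps.

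First, your claim that \cref{cor:versal} plus "flatness together with smoothness of $U$" forces the fibres of $(A_C//W\setminus\partial)\to T_{\Pic(\obbX)}$ to be at worst du Val is not a valid inference. Flat families with smooth generic fibre can have fibres with arbitrarily bad singularities; nothing about versality constrains them. The paper obtains normality of the fibres from \cite[22.1]{KY22} and the du Val statement off $E'$ from the specific result \cite[7.2]{HKY20}, both of which are about the mirror family itself, not about general deformation theory. You cannot substitute a generic flatness argument for these.

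Second, the log canonicity step is the crux, and you are right to flag it, but "inversion of adjunction along $\partial$" is the wrong tool and does not close the gap. Inversion of adjunction relates the singularities of a pair $(X,D)$ to those of $D$; it does not tell you that nearby fibres of a flat degeneration inherit log canonicity from the central fibre or from the total space. What the paper actually does is quite different: it replaces $L$ by a power to get a semistable resolution, identifies the height-one slice $\uB$ of the cone $C_L$ with the essential dual complex of the central fibre, verifies $\uB$ satisfies the hypotheses of \cite[21.1]{KY22} (using that $\partial\uB$ is a triangulated circle for the boundary part and \cite[Th.~2]{KX} for the interior part), and then invokes \cite[21.1]{KY22} with the "pushing fibres to the central fibre" argument from \cite[22.1]{KY22}. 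That is a degeneration/semicontinuity argument specific to these mirror families; it is not recoverable from local toric charts and the moving-worm heuristic, which is why your last paragraph remains speculative.

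Third, you do not perform the reduction to $p=\id$ that opens the paper's proof. The statement is for $L=p^*(A)$ with $A$ ample on $\oX$ (and the quotient should be by $L$, not $W$; the $W$ in the displayed statement is a typo), and the paper first uses \cref{prop:bbY} and \cref{prop:eqdef} to reduce to the case $L\in\Ample(X)$. Without this step your argument, which tacitly works with an ample class, doesn't cover the stated generality.

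The one part that is essentially fine is your identification of the boundary as a cycle of rational curves via \cref{lem:P1} and the face combinatorics; this matches the paper's appeal to \cref{rem:inductions}, which says the boundary family is purely toric.
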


\begin{proof} By \cref{prop:bbY} and \cref{prop:eqdef} we can assume
  $p$ is the identity, i.e. $L \in \Ample(X)$. 

    The
    set of fibres is independent of compactification, see \cref{rem:coeff}, so
    we drop $L_{\obbX}$ from the notation, and are free to use whatever compactification
    we like. As in the proof of \cref{prop:eqdef}, the coordinate ring
    $A_{C_L}$ can be computed from the formal completion of
    $\ocY \to \bbA^1_L$ at 
    $0 \in \VVert$, notation as in \cref{prop:dvres}. 
     The set of fibres of the mirror family do not change if we replace $L$ be
    a power, and so we can assume $\ocY$ has a semi-stable resolution
    $\cY \to \ocY$. Let $t \in \bbA^1_L$ be a monomial parameter. The
    basis $C_L = \ocY_L^{\beth} \cap \Sk(V)$ of the coordinate ring
    is naturally a cone, with
    $h \coloneqq t^{trop}:C_L \to \bbR_{\geq0}$, which gives the degree of
    the $\bbN$-graded $A_{C_L}$. $C_L$ is a cone over its height one
    part $B$, which is triangulated, (we indicate this by $\uB$)
    the essential (meaning corresponding
    to poles of the volume form) dual complex of the
    central fibre. $\uB$ has natural boundary, $\partial \uB$, given by the vanishing of
    some $\theta_{Z \in \Sk(G)}^{\trop}$, $\partial \uB$ is a triangulated
    circle, the dual complex of
    the vertex central fibre $\VVert(X,E)$ of \cref{lem:center}.

    \begin{claim} $\uB$ satisfies the conditions of \cite[21.1]{KY22}. \end{claim}
    \begin{proof} For the boundary simplices, $\partial \uB$ this is clear (its
      a triangulated circle). For interior simplices -- which
      correspond to valuations with center at $0 \in \VVert$, the result follows from
      \cite[Th 2]{KX} (applied to a resolution). 
    \end{proof}

    The boundary family $\partial(A_{C,L_X}//L)$ is purely toric,
    a family of cycles of rational curves, see \cref{rem:inductions}.
    Now the fibres $K_{X' + E'}$ are
    slc, trivial and log canonical by 
    \cite[21.1]{KY22} {\it pushing} fibres to the central
    fibre just as in the proof of \cite[22.1]{KY22}. The fibres are normal by
    \cite[22.1]{KY22}. $X'$ is du Val off $E'$ by \cite[7.2]{HKY20}.
    This completes the proof.  \end{proof}

        \begin{assumptions} \label{ass:bd} We fix $p_i: X \to X_i$ the blowdown of
          interior $-1$ curves $Q_i$ meeting $E_i$, $i = 1,2$. We
          assume $E_1 \neq E_2$. Let $q: X \to \oX$ be the composition
          (blow down both $Q_i$). Note for proving \cref{thm:modular} we
          can always assume this by induction on the charge,
          \cref{prop:tbred} and \cref{lem:ch-1}. 
          \end{assumptions}

          \begin{notation} \label{nota:sigma} For $L \in \Pic(X)$ let
            $\sigma(L)$ be the minimal cone in the GIT fan containing $L$
            (i.e. the unique fan containing $L$ in its (relative) interior).
          \end{notation}

          \begin{proposition} \label{prop:amps} Let $p: X \to \oX$
            be the blowdown of an internal $-1$ curve, meeting
            $E$ along the irreducible component $Z \subset E$. The
            following hold:
            \begin{enumerate}
              \item Let $L_1,L_2 \in \Ample(X)$. 
            The
            semi-stable loci have the same intersection with the boundary,
            see \cref{const:boundary}. Moreover this intersection
            consists entirely of stable points (for either linearization).
            \item $L_1,L_2$ as in (1). The birational map
              $A_C//L_1 \dasharrow A_C//L_2$ is an isomorphism in a neighborhood
              of the boundary (on domain and target). 
          \item Let $L \in \Ample(X)$, and $L_1 \in p^*(\Ample(\oX))$.
            Then the exceptional locus of
            $A_C//L \dasharrow A_C//L_1$ is disjoint from all boundary divisors
            other than the one associated to $Z$.

            \end{enumerate}
            
          \end{proposition}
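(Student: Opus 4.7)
The plan is to treat the three parts together, using \cref{lem:P1} (boundary GIT quotients are trivial $\bbP^1$-families) and the toric structure of boundary strata (\cref{rem:inductions}) as the main inputs, combined with the VGIT formalism of \cref{sec:GITbasics}.

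For (1), I would analyze the VGIT of the $T_{A_1(X,\bbZ)}$-action on $\Spec(A_C/I_Z)$ for each $Z \subset E$. By \cref{rem:inductions} this is a toric situation, so the GIT fan is a finite rational polyhedral fan in $\Pic(X)_{\bbR}$, and semistable equals stable on the interior of each maximal chamber (\cref{prop:interiors}). \cref{lem:P1} asserts that $(A_C/I_Z)//L$ is the same trivial $\bbP^1$-family for every $L \in \Ample(X)$. Since distinct maximal chambers in toric VGIT are related by non-trivial small modifications (\cref{prop:adj}) and hence produce non-isomorphic polarized quotients, the ample cone must lie in the interior of a single maximal chamber of this fan; \cref{prop:interiors} then gives (1). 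This chamber-coincidence step is delicate and is the main obstacle of the whole argument: one must extract enough toric-VGIT rigidity from the constancy of the quotient as a trivial $\bbP^1$-family, together with its induced polarization, to rule out jumps across walls.

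Part (2) is a formal consequence of (1): the common semistable locus $A^{\mss}(L_1) \cap A^{\mss}(L_2) \subset \Spec(A_C)$ contains the preimage of the entire boundary, so the birational map $A_C//L_1 \dashrightarrow A_C//L_2$ is regular on an open subset whose image in each quotient contains a neighborhood of the boundary; applying the same reasoning to the inverse map yields an isomorphism on such neighborhoods.

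For (3), I would show that for each $Z' \neq Z$ the induced birational map of boundary strata $(A_C/I_{Z'})//L \dashrightarrow (A_C/I_{Z'})//L_1$ is an isomorphism, forcing the exceptional locus of $A_C//L \dashrightarrow A_C//L_1$ to be disjoint from $V(I_{Z'})$ in $A_C//L$. By the argument of (1) applied to both $L \in \Ample(X) \subset \Nef(X)$ and $L_1 \in p^*\Ample(\oX) \subset \Nef(X)$, both $(A_C/I_{Z'})//L$ and $(A_C/I_{Z'})//L_1$ are trivial $\bbP^1$-families. The crucial point is that the wall in the ambient GIT fan separating $L$ from $L_1$ corresponds to the contraction of the curve $Q$, and since $Q$ meets $Z$ but not $Z'$ by hypothesis, the corresponding $1$-parameter subgroup is invisible from $V(I_{Z'})$; hence this wall does not subdivide the VGIT fan on $V(I_{Z'})$, so $L$ and $L_1$ lie in a common chamber there and the restricted birational map is regular with regular inverse, hence an isomorphism.
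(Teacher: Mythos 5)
Your approach to parts (1) and (3) is genuinely different from the paper's, and as you already flag, it has a gap that is not filled. The paper's proof does \emph{not} try to deduce the chamber structure of the GIT fan from knowledge of the quotients (which is the logic your argument rests on). Instead, for (1) it uses \cref{prop:bbY} to replace $A_C$ by a mirror algebra computed from a formal completion of the GHKI family, and then observes --- via the explicit local toric charts of \cite{GHKIv2}, following \cref{const:boundary} to restrict to a generic fibre of $\theta_{Z\in\Sk(G)}$ --- that the relevant formal deformation near the boundary strata is isomorphic to the one in the \emph{toric} case. In the toric case $A_C=\Cox(X)$ and the full secondary--fan picture is explicitly known: the ample cone is a single maximal GIT chamber, ample linearizations have no strictly semistable points, and the $\mathrm{ss}=\mathrm{stable}$ locus is literally the same set for all amples. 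That explicit knowledge is what gives (1). For (3) the paper again reduces to the formal completion, now along the boundary $1$-stratum $S_p$; the deformation there is not toric (there is a single moving worm along the stratum for $Z$, \cref{rem:mw}), but restricting further to a generic fibre of $\theta_{W\in\Sk(G)}$ for $W\neq Z$ again lands in the toric local charts and the argument of (1) applies.

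The concrete gap in your version: the inference ``all quotients $(A_C/I_Z)//L$ over $\Ample(X)$ are isomorphic trivial $\bbP^1$-families, hence $\Ample(X)$ lies in the interior of a single chamber of the GIT fan on $\Spec(A_C/I_Z)$'' is not a valid VGIT principle. Equality of GIT quotients does not imply equality of semistable loci; distinct adjacent chambers can give isomorphic quotients while the strictly semistable loci and the semistable loci themselves differ (this is exactly what happens at a flop-type wall, and you have no \emph{a priori} control on what happens on special fibres of the family). Part (1) asserts equality of \emph{semistable loci on the boundary}, which is strictly more than equality of quotients, so \cref{lem:P1} alone cannot suffice. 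Moreover, the proof of \cref{lem:P1} itself already goes through the toric reduction of \cref{rem:inductions}, so attempting to use it to \emph{avoid} the toric reduction is circular in spirit. Your part (3) has the same character: the claim that the wall between $L$ and $L_1$ ``does not subdivide the VGIT fan on $V(I_{Z'})$'' because $Q$ misses $Z'$ is the right heuristic, but to substantiate it you need precisely the local model near the $Z'$-face, which is what the toric reduction via \cref{prop:bbY} and \cite{GHKIv2} supplies. Part (2) as a formal consequence of (1) is fine and matches the paper.
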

          \begin{proof} We consider (1). By \cref{prop:bbY}
           $A_C//L_i$ is 
        computed using the formal completion of $\ocY \to \TV(\Nef(X))$ at
        the $0$-stratum of the vertex central fibre. 
        Moreover, by \cref{const:boundary}, for $(A_C/I_Z)//L_i$ we can further restrict to a generic
        fibre of $\Theta_{Z \in G}$ -- which we note, in the vertex, is
        a transverse slice of the corresponding $1$-stratum. 
        The formal deformation here is purely toric by \cite{GHKIv2}, isomorphic
        to the analogous construction for $(X,E)$ toric. Indeed, this is
        clear from the construction in 
        \cite{GHKIv2}, which gives 
        local toric charts (covering all but the $0$ stratum of the
        vertex). So we are reduced to the toric case, which is easy to check, 
        e.g. the equality $A_C = \Cox(X)$ 
        is  part of the theory of the secondary fan. See \cite[Chapter 14]{CLS}. In the toric
        case ample linearizations have no strictly semi-stable points, and
        the semi-stable ($=$ stable) locus is the same for all amples.
        the stable loci are equal for all amples.This gives
        (1). (2) follows immediately from (1).

        (3) follows from the same reasoning: By the argument for
        \cref{prop:bbY} the rational map
        is determined by the formal completion
         of the GHKI mirror along the
         the boundary $1$-stratum $S_p$. The family here is not toric,
         there is a single {\it moving worm}, see \cref{rem:mw}, along the
         one stratum of $\VVert$ corresponding to $Z$. But, as in the previous
         paragraph, the exceptional locus of the rational map near the boundary
         corresponding to $W\neq Z \subset E$ is determined by further
         restricting to a general fibre of $\theta_{W \in \Sk(G)}$, and here
         the deformation is toric (e.g. by the local charts of \cite{GHKIv2}).
         So the result follows as in (1).
         
        This completes the proof. 
        
      \end{proof}

        \begin{corollary} \label{cor:regular} Pick $L \in \Ample(X)$,
          $L_1 \in p_1^*(\Ample(X_1))$. Then the birational
          map $A_C//L \dasharrow A_C//L_1$ is regular in a neighborhood
          of the boundary. 
          \end{corollary}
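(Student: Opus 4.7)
The plan is to split the neighborhood of the boundary into two parts and handle them by separate arguments. First, by \cref{prop:amps}(3) applied with $p = p_1$ and $Z = E_1$, the exceptional locus of the birational map $f : A_C//L \dashrightarrow A_C//L_1$ is disjoint from every boundary divisor $\cZ$ of $A_C//L$ with $Z \neq E_1$; hence $f$ is already an isomorphism onto its image --- in particular regular --- in a neighborhood of each such $\cZ$. It therefore remains to prove regularity in a neighborhood of the component $\cE_1$ corresponding to the irreducible component $E_1 \subset E$ met by $Q_1$.

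For the $\cE_1$ piece I would reformulate regularity as a semistability condition: $f$ is defined at $y \in A_C//L$ iff the closed $T_{A_1(X,\bbZ)}$-orbit above $y$ in $\Spec(A_C)^{\mss}(L)$ meets $\Spec(A_C)^{\mss}(L_1)$, and since $L_1$ acts through a character this is equivalent to the existence of some $s \in A_{mL_1}$ ($m \geq 1$) non-vanishing at $y$. As the regular locus is open in $A_C//L$, it suffices to produce such an $s$ for every $y \in \cE_1$. Using $\cE_1 = (A_C/I_{E_1})//L$ from \cref{const:boundary}, this becomes a question about $(A_C/I_{E_1})_{mL_1}$: the image of the theta basis of $A_{mL_1}$ under $A_C \twoheadrightarrow A_C/I_{E_1}$ is precisely the theta basis of $(A_C/I_{E_1})_{mL_1}$, indexed by $E_1^{\perp} \cap w^{-1}(mL_1) \cap C$, so the image fills out all of $(A_C/I_{E_1})_{mL_1}$; we therefore need this whole graded piece to have no common zero on $\cE_1$.

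By \cref{lem:P1}, $\cE_1 \to T_{\Pic(X)}$ is a trivial $\bbP^1$-family, and the character $L_1$ cuts out a line bundle of relative degree
$$
L_1 \cdot E_1 \;=\; p_1^*(A_1) \cdot E_1 \;=\; A_1 \cdot (p_1)_*(E_1) \;>\; 0,
$$
using that $A_1$ is ample on $X_1$ and $(p_1)_*(E_1) = p_1(E_1)$ is a non-zero effective curve on $X_1$. Hence this line bundle is relatively basepoint-free, so (for any $m \geq 1$) its global sections have no common zero on the trivial $\bbP^1$-family $\cE_1$. Together with openness of the regular locus, this forces $f$ to be defined at every point of $\cE_1$ and therefore regular in an open neighborhood of it.

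The key technical obstacle is identifying the theta basis of $(A_C/I_{E_1})_{mL_1}$ with the full space of global sections of the degree-$(mL_1 \cdot E_1)$ line bundle on the trivial $\bbP^1$-family $\cE_1$, so that relative basepoint-freeness of the line bundle transfers directly to the no-common-zero property for theta sections. This should follow by interpreting $A_C/I_{E_1}$ as the mirror algebra for a generic fibre of $\theta_{E_1 \in \Sk(G)}$ (\cref{rem:convexcones}, \cref{rem:inductions}) combined with the $\bbP^1$-triviality from \cref{lem:P1}; alternatively one can chase the theta functions directly in the explicit $\bbP^1 \times T_{\Pic(X)}$ coordinates. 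Carrying out this compatibility cleanly, without circularly invoking the statement of \cref{thm:modular} for $\cE_1$, is the delicate step.
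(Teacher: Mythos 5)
Your proof takes a genuinely different route from the paper. The paper's argument is a two-line reduction: by \cref{prop:amps}(2) the GIT quotient near the boundary is independent of the choice of $L\in\Ample(X)$, so one may replace $L$ by a GIT-general ample class whose chamber has $\sigma(L_1)$ in its closure; then by the order-reversing inclusion of semistable loci in the VGIT formalism of \cref{sec:GITbasics} one has $A_C^{\mss}(L)\subset A_C^{\mss}(L_1)$, and the map $A_C//L\to A_C//L_1$ is \emph{globally} regular, not just near the boundary. Note that such a chamber exists because $L_1=p_1^*(A_1)$ lies on the boundary of $\Ample(X)$, so a deleted neighborhood of $L_1$ meets $\Ample(X)$. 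Your decomposition of the boundary by component, using \cref{prop:amps}(3) for the components away from $E_1$ and an explicit basepoint-freeness argument on $\cE_1$, is a plausible alternative but is considerably longer, and --- as you say yourself --- hinges on identifying the image of the theta basis in $(A_C/I_{E_1})_{mL_1}$ with the full $H^0$ of the restricted line bundle on the trivial $\bbP^1$-family. That step is precisely what the toric reduction underlying \cref{lem:P1} and \cref{rem:inductions} would supply, but you would still have to spell it out (the sections/thetas match because the generic-fibre mirror is genuinely toric and in the toric case sections are monomials), and you would also want to be a little more careful justifying the ``$f$ defined at $y$ iff some $s\in A_{mL_1}$ doesn't vanish at $y$'' reformulation --- it is correct here because $L$ is GIT-general (so $A^s(L)=A^{\mss}(L)$ by \cref{prop:interiors} and there is a unique orbit over $y$), but for a general linearization this equivalence can fail. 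The degree computation $L_1\cdot E_1 = A_1\cdot(p_1)_*E_1>0$ is correct since $E_1$ is a boundary component and hence not contracted by $p_1$. In short: your approach can probably be made to work, but the paper's VGIT reduction via \cref{prop:amps}(2) sidesteps all of this and is what I would recommend.
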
 
          \begin{proof} By \cref{prop:amps}, the question is independent
            of $L$ (in $\Ample(X)$), so we can choose it so that the GIT
            chamber for $L_1$ is in the boundary of the GIT chamber for
            $L_2$. Then the birational map is regular by 
            the general GIT theory. \todo{reference?} This completes
            the proof. \end{proof} 

          \begin{remark} \label{rem:formalamps} \cref{prop:amps} , \cref{cor:regular}
            and \cref{lem:P1} hold as well
            for the mirror algebra associated to the formal deformations as
            in \cref{rem:beinduct}.
        Indeed the proof  only involves the formal deformation.
        \end{remark}

Now we assume we have $p:X \to X_{1,2}$ blowdown of disjoint internal
$-1$ curves, $Q_1,Q_2$, with $Q_i$ meeting $E_i$, and $E_1 \neq E_2$. 
Let $p_i: X \to X_i$ blowdown $Q_j$ ($i \neq j$). 
Let $L \in \Ample(X)$, $L_i \in \bogus(p_i)$, $L_{1,2} \in \bogus(p_{1,2})$.

          We have a commutative diagram of GIT
          quotients

\begin{equation} \label{eq:cd}
\begin{tikzcd}
		A_C//L  \rar[dotted]{p_1} \dar[dotted]{p_2} & A_C//L_1 \dar[dotted]{q_1}\\
		A_C//L_2  \rar[dotted]{q_2} & A_C//L_{1,2}
\end{tikzcd}
\end{equation}

          \begin{proposition} \label{prop:induct} Assumptions as
            immediately above. Referring to the commutative diagram
            \cref{eq:cd}: 
            After restricting to a fibre $p \in T_{\oV}$
            (which we leave out of the notation, thus we here abuse notation
            and write $A_C$ rather than $A_{C,L_{\oV},p}$) 
            The following hold: 
            \begin{enumerate}
            \item Each map is birational, regular in a neighborhood of
              the boundary, and carries the boundary isomorphically onto
              its image.

            \item For any ample $M$, each fibre of $A_C//M$ is smooth
              in a neighborhood of the boundary.

            \item For any fibre (over a point of $L_{\obbV}$),
              and any  $M_1,M_2 \in
              \Ample(X)$, the rational map $A_C//M_1 \dasharrow A_C//M_2$
              induces an isomorphism of the minimal resolutions

            \item The restriction $A_C//M \to L_{\obbV}$ to the generic
              locus, see \cref{def:gen},
              is smooth, for any ample $M$, and the natural birational
              map between any two restrictions (for two ample $M_i$) is an
              isomorphism.

            \item If we restrict to a fibre over the generic locus, then on
              the minimal resolution (which we indicate with a tilde) 
              there are disjoint $-1$ curves $Q_1, Q_2 \subset \widetilde{A_C//L}$
              such that $p_i$ is the blowdown of $Q_j$, and $q_i$ the blowdown
              of $Q_i$.
            \end{enumerate}
      \end{proposition}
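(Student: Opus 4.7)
The plan is to establish (1)--(5) by combining three ingredients: the boundary-local analysis of \cref{prop:amps}, \cref{cor:regular}, and its formal analog \cref{rem:formalamps}; an induction on the charge using \cref{prop:bbY} and \cref{prop:eqdef}, which identify the subrings $A_{C,p_i^*(\Nef(X_i))}$ and $A_{C,p^*(\Nef(X_{1,2}))}$ with mirror algebras attached to the lower-charge pairs $X_i$ and $X_{1,2}$; and standard two-dimensional surface geometry, namely that small $\bbQ$-factorial modifications within a maximal chamber of $\MovFan(\bbX)$ are trivial, and that du Val singularities have well-defined minimal resolutions.

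First I would prove (1) and (2). Each of $p_1, p_2, q_1, q_2$ is a VGIT birational map between chambers of the GIT fan (\cref{prop:bound}); it is regular in a neighborhood of the boundary by \cref{cor:regular} (applied to the formal deformations of \cref{rem:formalamps} in the cases of $q_1, q_2$, whose structure is controlled inductively via \cref{prop:bbY}--\cref{prop:eqdef}). By \cref{prop:amps}(3), the exceptional locus of $p_i$ is disjoint from every boundary component $\partial_Z$ with $Z \neq E_i$, so $p_i$ is an isomorphism in a neighborhood of each such $\partial_Z$; on $\partial_{E_i}$, the local toric model at the nodes of the boundary cycle (as in the proof of \cref{prop:amps}) together with \cref{lem:P1} shows the restriction is an isomorphism on the underlying $\bbP^1$, establishing (1). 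Part (2) is an immediate restatement of \cref{prop:lpsings}(2), which places all singularities off the boundary.

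For (3) and (4), I would note that all $M \in \Ample(X)$ lie inside a single maximal chamber of $\MovFan(\bbX)$ (by \cref{rem:sqm}), so within it the finer wall-crossings of the GIT fan are iso-SQMs and in particular give isomorphic normal quotients; hence $A_C//M_1 \cong A_C//M_2$ for $M_1, M_2 \in \Ample(X)$, and a fortiori their minimal resolutions agree, proving (3). This identification passes to families. Smoothness over the generic locus in (4) then follows from flatness of $A_C//M$ over $L_{\obbV}$, smoothness of fibres (by \cref{def:gen}), and smoothness near the boundary (from (2)), which jointly imply smoothness of the total space over the generic locus.

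The main obstacle is (5), the explicit identification of the exceptional curves. Over the generic locus the fibres are smooth Looijenga pairs, so the exceptional locus of $p_i$ on such a fibre is, by (1) and (2), a connected proper curve lying inside the smooth locus, meeting only $E_i$ among the boundary components, and contracted to a smooth point of the smooth fibre of $A_C//L_i$; Castelnuovo's contractibility criterion then forces it to be a single $(-1)$-curve $Q_i$ with $Q_i \cdot E_i = 1$, i.e.\ an internal $-1$ curve. For the disjointness of $Q_1, Q_2$: by the inductive identification of $q_i$ (via \cref{prop:bbY}--\cref{prop:eqdef} applied to $X_i \to X_{1,2}$) as the blowdown of the image $p_i(Q_j)$ of $Q_j$ for $j \neq i$, the composition $p = q_1 \circ p_1 = q_2 \circ p_2$ contracts $Q_1$ and $Q_2$ to distinct smooth points on the distinct boundary components $E_1 \neq E_2$; if $Q_1 \cap Q_2 \neq \emptyset$, the union $Q_1 \cup Q_2$ would be connected and hence contracted by $p$ to a single point, a contradiction.
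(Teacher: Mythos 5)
Your proof of parts (1) and (2) is essentially in line with the paper's: regularity near the boundary comes from \cref{cor:regular} plus the formal analog of \cref{rem:formalamps}, and (2) is indeed immediate from \cref{prop:lpsings}(2). For part (1), the paper argues the non-contraction of boundary components by induction (for the $q_i$) and then commutativity plus \cref{prop:amps}(3) (for the $p_i$), rather than a direct toric-chart inspection, but the ideas are close.

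The serious problem is in your treatment of (3) and (4). You argue that the GIT quotients $A_C//M_1$ and $A_C//M_2$ for $M_1,M_2 \in \Ample(X)$ are isomorphic by placing $\Ample(X)$ inside a single maximal chamber of $\MovFan(\bbX)$ and invoking \cref{rem:sqm}. But \cref{rem:sqm} is a statement about the quniversal families $\bbX$ and their moving fans; it says nothing a priori about the GIT fan for $T_{A_1(X,\bbZ)}$ acting on $\Spec(A_C)$. The identification of that GIT fan with $\MovFan(\bbX)$ is precisely what follows from $A_{C,L_X} = \Cox(\bbX)$ via \cite{HK96} — i.e.\ it is a consequence of \cref{thm:modular}, whose proof uses the present proposition. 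So this step is circular: you cannot assume the VGIT wall-crossings inside $\Ample(X)$ are trivial. In fact \cref{prop:amps}(1)--(2) only assert the semi-stable loci agree \emph{on the boundary} and that the birational map is an isomorphism \emph{near the boundary} — that is all that is known. The paper's actual argument for (3) is: by (2) the fibres are smooth near $\partial$, by \cref{prop:amps}(2) the birational map of fibres is an isomorphism in a neighborhood of $\partial$, so the induced map between minimal resolutions is a birational map of smooth Looijenga pairs that is regular near $\partial$ and takes $\partial$ isomorphically to $\partial$; \cref{lem:lpnb} then forces it to be a global isomorphism. Statement (4) is then deduced \emph{after} (5), once smoothness of the fibres on the generic locus has been established.

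Your argument for (5) also has a genuine gap. You contract the exceptional locus of $p_i$ on a smooth fibre and invoke Castelnuovo's criterion to conclude it is a single $(-1)$-curve. But Castelnuovo only tells you a $(-1)$-curve \emph{can} be contracted; a birational morphism of smooth surfaces contracting a connected proper curve to a smooth point could a priori be a composition of several blowups, with exceptional locus a chain of curves (one $(-1)$-curve and several $(-2)$-curves, say). To rule this out you need to know the relative Picard number of the contraction is $1$. That is exactly the point the paper handles: it first reduces (via commutativity and \cref{prop:amps}(3)) to showing the $q_i$ are blowdowns of a single $(-1)$-curve, establishes this by the charge induction in the $X_i$ case, and then deforms to the formal case using the invariance of smoothness along the boundary (read off from the triviality of the boundary family and the intersection numbers) and the constancy of the relative Picard number (which is $1$ because $\Pic(X)/p_i^*\Pic(X_i) \cong \bbZ$). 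Your proof also does not distinguish the three ``cases'' of \cref{rem:beinduct} — the $X$ case, the $X_i$ case, and the formal case — and this distinction is essential, since \cref{prop:bbY}/\cref{prop:eqdef} only relate $A_{C,p_i^*\Nef(X_i)}$ to the $X_i$ picture through the intermediate formal case.
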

      \begin{proof}
        For (1): If we choose $L$ sufficiently close to the face
        $p_1^*(\Nef(X_1)) \subset \Nef(X)$ then $p_1$ is regular by VGIT.
        Now near the boundary we have regularity for all $L$ by
        \cref{cor:regular}. Note: this same reasoning holds in the
        {\it formal case} discussed in \cref{rem:beinduct}.  The same
        reasoning applies to the other maps.  This gives gives regularity
        in (1). Next we consider the boundary map in (1). By
        \cref{lem:P1}, the boundary families (in all {\it cases}, notation
        as in \cref{rem:beinduct}) are trivial, and the map takes the
        component associated to $I_E$, see ??, to the analogous component. 
        First
        consider the $q_i$. It's enough to see no boundary component
        is contracted. 
         In the $X_i$ case (notation as \cref{rem:beinduct})
         the statement holds by induction, now the formal case follows, as the
         statement, that no components is contracted  is deformation
         invariant (since the boundary family is a family of smooth
         $\bbP^2s$). This gives the result for the $q_i$ and this implies
         the result for the $p_i$ using commutativity of \cref{eq:cd}
         and (3) of \cref{prop:amps}. This gives (1).

         (2) implies (3) by \cref{lem:lpnb} 
            and (2) of \cref{prop:amps}.

         For the remaining statements we can restrict to the generic locus.
         Now the boundary on each fibre is ample, and so it follows from
         (2) of \cref{prop:amps} that all the maps are regular. Now using
         commutativity of the diagram and (3) of \cref{prop:amps}, its
         enough to show the $q_i$ are blowdowns of $-1$ curves. Here we use
         the notation from \cref{rem:beinduct}. For the $X_i$ case we have
         the result by induction on the charge. The formal case follows as
         what we are trying to prove is a deformation invariant statement:
         smoothness along the boundary is deformation invariant, because
         we know the boundary family is trivial so we can check it by
         computing that the intersection number of two adjacent boundary components
         is one. Now this gives smoothness of the surfaces, as the boundary
         is ample on the minimal resolution (the surfaces are generic).
         To see $q_i$ (over any fibre of the base torus for the formal case)
         are blowdowns of a single $-1$ curve: We have a family of regular
         birational maps between smooth Looijenga pairs, carrying the boundary
         isomorphically to its image. The relative Picard number is constant,
         namely one so the formal case follows by deformation from the $X_i$ case.
         This gives (5). 

         For (4) we already have that any two families are smooth, and the
         birational map is an isomorphism near the boundary. But the boundary
         is ample, it follows the birational map, or its inverse, has no
         exceptional locus and thus (as its between smooth families) is an iso.
         This completes the proof.

\end{proof} 
      \begin{corollary} \label{cor:tm} For any fibre over the generic locus 
         $\widetilde{A_C//L}  \dasharrow \widetilde{A_C//L_{1,2}}$  has the
         same deformation type 
         as $X \to X_{1,2}$ -- where as above, the tilde means the minimal
         resolution away from the boundary.

         For any ample $M$, any fibre $(A_C//W,\partial)$ over the generic
         locus has the same deformation type as $(X,E)$. 
      \end{corollary}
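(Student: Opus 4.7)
The plan is to induct on the charge of $(X,E)$. By \cref{prop:tbred} and \cref{lem:ch-1} we may reduce to the setting of \cref{ass:bd}, where $X \to X_{1,2}$ contracts two disjoint internal $-1$ curves $Q_1, Q_2$ meeting distinct boundary components $E_1 \neq E_2$, so that the charge of $X_{1,2}$ is strictly less than that of $X$. The base case of charge $\leq 1$ is treated directly by \cref{cor:crtp} in \cref{sec:prod}, where $\Pic(U)$ is trivial and a special argument applies.

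For the first assertion, restrict to a fibre over the generic locus (\cref{def:gen}). By \cref{prop:induct}(5), the birational map $\widetilde{A_C//L} \dasharrow \widetilde{A_C//L_{1,2}}$ is the blowdown of two disjoint $-1$ curves, and the individual maps $p_i, q_i$ realise the single-curve contractions in exactly the pattern of $X \to X_{1,2}$. By the inductive hypothesis applied to the lower-charge pair $(X_{1,2}, E_{1,2})$ (whose interior remains affine, so the hypotheses of \cref{thm:modular} persist), the fibre $(A_C//L_{1,2}, \partial)$ is deformation equivalent to $(X_{1,2}, E_{1,2})$. Blowing up the two marked boundary points then recovers a configuration deformation equivalent to $X \to X_{1,2}$, giving the first assertion.

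For the second assertion, I would use \cref{prop:induct}(4) to see that over the generic locus the birational map $A_C//M_1 \dasharrow A_C//M_2$ between quotients by any two ample classes is an isomorphism, and \cref{prop:amps}(2) to see that this isomorphism preserves the boundary. Combined with the first assertion (which yields the deformation type of $(A_C//L, \partial)$), the pair $(A_C//M, \partial)$ over any generic fibre is deformation equivalent to $(X, E)$ for every ample $M \in \Ample(X)$.

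The main subtlety is the combinatorial matching: the two $-1$ curves produced on the GIT side by \cref{prop:induct}(5) must meet the same pair of boundary components as $Q_1, Q_2 \subset X$, since only then does the blowup produce the correct Looijenga configuration rather than merely an abstractly isomorphic surface. This is assembled from \cref{prop:induct}(1), which guarantees that each map in \cref{eq:cd} carries the boundary isomorphically onto its image, together with \cref{prop:amps}(3), which pinpoints the unique boundary component each exceptional curve can meet. With these two controls in place, the inductive step goes through and no further geometric input should be required.
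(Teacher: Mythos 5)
Your argument follows essentially the same route as the paper's proof: induction on the charge, reduction to the setting of \cref{ass:bd} via \cref{prop:tbred} and \cref{lem:ch-1}, the base case handled by \cref{cor:crtp}, \cref{prop:induct}(5) to identify the two boundary-preserving $-1$-curve contractions, and the inductive application of \cref{thm:modular} to $X_{1,2}$; your second paragraph uses \cref{prop:induct}(4) together with \cref{prop:amps}(2) where the paper cites \cref{prop:induct}(3), but these do the same job of transferring from $L$ to an arbitrary ample $M$. One point you gloss over, which the paper handles via \cref{prop:bbY}, \cref{prop:eqdef} and \cref{rem:beinduct}, is that the inductive hypothesis for $(X_{1,2}, E_{1,2})$ speaks of the double-mirror algebra $A^{X_{1,2}}_C$ built from $X_{1,2}$ itself, not directly of the subring $A^X_{C,q^*(\Nef(X_{1,2}))}$; both are base extensions of the mirror to the same formal deformation, which is what makes deformation type transfer, and you would want to record at least a pointer to \cref{rem:beinduct} to close that gap. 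Your combinatorial-matching paragraph, using \cref{prop:induct}(1) and \cref{prop:amps}(3) to pin down which boundary component each exceptional curve meets, usefully makes explicit a step the paper leaves implicit.
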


      \begin{proof} The first paragraph follows from \cref{thm:modular} 
        and \cref{prop:induct}, by induction on the charge.
        
        The second follows from the first, and induction on charge, using
        (3) of \cref{prop:induct}. 
      \end{proof}
      
      \begin{lemma} \label{lem:lpnb} Let $f:(X_1,E_1) \dasharrow (X_2,E_2)$
        be a birational rational map of smooth Looijenga pairs, which
        gives a regular isomorphism of some neighborhood of $E_1$ to some
        neighborhood of $E_2$, and takes $E_1$ isomorphically to $E_2$.
        Then $f$ is an isomorphism.
      \end{lemma}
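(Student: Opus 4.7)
The plan is to pass to a common resolution $W$ and exploit the Calabi--Yau condition $K_{X_i}+E_i\sim 0$ via the discrepancy formula. Let $W$ be a smooth surface obtained by resolving the graph of $f$, with birational morphisms $\pi_i\colon W\to X_i$ satisfying $\pi_2=f\circ\pi_1$ on the domain of $f$. We may arrange that $\pi_1$ is an isomorphism over the given neighborhood of $E_1$; the hypothesis on $f$ then forces $\pi_2$ to be an isomorphism over the corresponding neighborhood of $E_2$. Consequently, every $\pi_i$-exceptional divisor on $W$ has center in the interior $X_i\setminus E_i$, and $\pi_i^*E_i$ equals the strict transform $E_W^{(i)}$ with no exceptional correction.

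Pulling back $K_{X_i}+E_i\sim 0$ via $\pi_i$ and applying the discrepancy formula $K_W=\pi_i^*K_{X_i}+\sum a_i(F)\,F$ gives
\[
K_W+E_W^{(i)}=\sum_{F\ \pi_i\text{-exceptional}} a_i(F)\,F,
\]
where each discrepancy $a_i(F)>0$ because $F$ is exceptional for a birational morphism between smooth surfaces with center at a point of the interior: on a smooth surface the discrepancy of any iterated blowup of smooth points is a positive integer, and the log canonical places of the snc pair $(X_i,E_i)$ are exactly the strata of $E_i$, none of which lies in the interior.

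The key step: because $f$ takes $E_1$ isomorphically to $E_2$, the strict transforms coincide, $E_W^{(1)}=E_W^{(2)}$, and hence the two right-hand sides above agree as effective divisors on $W$. Two nonnegative combinations of distinct prime divisors with strictly positive coefficients can only be equal if their supports and coefficients match, so $\pi_1$ and $\pi_2$ have identical exceptional loci on $W$.

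Finally, birational proper morphisms of smooth surfaces have connected fibers, so each connected $\pi_1$-fiber, being contracted by $\pi_2$ to a connected finite subset of a separated scheme, is mapped to a single point. The rigidity lemma (or, equivalently, Zariski's main theorem) then yields a regular morphism $X_1\to X_2$ agreeing with $f$, and by symmetry a regular inverse, so $f$ is an isomorphism. The one technical point is the strict positivity of the discrepancies $a_i(F)$, but as noted this is immediate from the smooth snc structure of $(X_i,E_i)$ combined with the fact that the exceptional divisors avoid $E_i$.
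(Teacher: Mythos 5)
Your argument takes a genuinely different route from the paper: you pass to a common resolution $W$ with $\pi_i\colon W\to X_i$ and compare the discrepancy divisors $\sum a_i(F)F$, whereas the paper works directly on the open set $V\subset X_1$ where $f$ is regular (whose complement is finite, so $\Pic(V)=\Pic(X_1)$), observes that $(f|_V)^*(K_{X_2})=(f|_V)^*(-E_2)=-E_1=K_{X_1}$, and then appeals to the Jacobian (the derivative sequence) to conclude that $f|_V$ has no ramification divisor, hence no exceptional divisor. Both arguments exploit the same numerical coincidence coming from $K+E\sim 0$ together with $E_1\cong E_2$; your version packages it as an equality of relative canonical divisors on $W$, theirs as the triviality of the ramification divisor of $f$.

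There is, however, a genuine gap at your ``key step.'' The discrepancy formula $K_W=\pi_i^*K_{X_i}+\sum a_i(F)F$ holds as an equality of divisors only for compatible choices of representatives of $K_W$ and $K_{X_i}$. The condition $K_{X_i}+E_i\sim 0$ is a linear equivalence, not an equality of divisors; you cannot simultaneously (for $i=1$ and $i=2$) choose a single representative $K_W$ on $W$ so that both forced representatives $K_{X_i}$ equal $-E_i$ on the nose. What you actually obtain from $E_W^{(1)}=E_W^{(2)}$ is therefore only a linear equivalence
$\sum_F a_1(F)\,F \sim \sum_F a_2(F)\,F$, not an equality of effective divisors, and your next sentence (``two nonnegative combinations of distinct prime divisors\dots can only be equal if\dots'') is a statement about actual equality that does not apply to linear equivalence. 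The gap is fillable: push the principal divisor $\sum a_1(F)F-\sum a_2(F)F$ forward along $\pi_1$ to see that $\sum a_2(F)F$ is also $\pi_1$-exceptional (an effective divisor on projective $X_1$ linearly equivalent to a negative effective one must vanish), and then Mumford's negative-definiteness of the $\pi_1$-exceptional intersection form forces the difference to be zero. Equivalently, one can rescale the log volume form $\Omega_2$ so that $f^*\Omega_2=\Omega_1$ --- which is legitimate precisely because the ratio is a regular function on a big open subset of $X_1$, hence constant --- but this observation about the big open set is exactly the paper's mechanism, so once you add it your proof is no longer shorter than theirs. Finally, the closing rigidity-lemma paragraph is correct but more elaborate than needed: once the exceptional loci of $\pi_1$ and $\pi_2$ agree, no prime divisor on $X_1$ (i.e., no non-$\pi_1$-exceptional divisor on $W$) is contracted by $\pi_2$, so $f$ has no exceptional divisor, and by symmetry neither does $f^{-1}$; a birational map of smooth projective surfaces with no exceptional divisor either way is an isomorphism.
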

      \begin{proof} It's enough to show $f$ has no exceptional divisor (as we
        can apply the same argument to $f^{-1}$. So consider $V \subset X_1$
        the regular locus, note the complement $X_1 \ V$ is zero dimensional,
        so $\Pic(V) = \Pic(X_1)$.
        $$
        (f|_V)^*(K_{X_2}) \equiv (f|_V)^*(-E_2) = - E_1 \equiv K_{X_1}.
        $$
        
        It follows that $K_{X_1} \equiv f^*(K_{X_2}$ which implies (by the
        derivative sequence) $f$ has no exceptional divisors. This
        completes the proof.
      \end{proof}

\section{More on the period point}  \label{sec:mpp}  
Now we can deduce modularity for the families of pairs
     $(A_C//L,\partial)$: 

      By \cite{LZ23}, there is a canonical (mirror symmetric)
      compactification of the GHKI family to a  modular family -- the
      universal
      deformation \cite[4.24]{MLPv1} of $(X,E)$. Indicate this as $(\cX,\cE) \to T_{\Pic(X)}$.

      \begin{proposition} \label{prop:LZ} Notation as in \cref{cor:LZ}.
     
        The base extension
        $(\cX,\cE) \times_{T_{\Pic(X)}} T_{\Pic(\obbX)}$ is
      $$
      (A_{C,L_{\obbX}}//W,\partial) \to T_{\obbX}.
      $$
    \end{proposition}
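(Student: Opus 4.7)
The plan is to combine the isomorphism of interiors from \cref{cor:LZ} with the modularity of the universal family from \cite[4.24]{MLPv1} (and \cite{LZ23}), extending across the boundary by a deformation-theoretic argument.

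First, by \cref{cor:LZ}, localization at the monomial $\pi$ cutting out $\partial$ gives a canonical isomorphism
\[
(A_{C,L_{\obbX}}//W)\setminus\partial \;\cong\; (\cX\setminus\cE)\times_{T_{\Pic(X)}} T_{\Pic(\obbX)}
\]
of families over $T_{\Pic(\obbX)}$, compatible with the $\Pic(X)$-grading on the left (via the weights $w$) and the marking of the universal family on the right. This already supplies the statement away from the boundary.

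Second, I would invoke modularity of the universal family $(\cX,\cE)\to T_{\Pic(X)}$ to glue across $\partial$. By \cref{prop:lpsings}, every fibre of $(A_{C,L_{\obbX}}//W,\partial)\to T_{\Pic(\obbX)}$ is a Looijenga pair with at worst du Val singularities off the boundary, and by \cref{cor:tm} these fibres have the deformation type of $(X,E)$ over the generic locus. Flatness of $A_{C,L_{\obbX}}$ over $L_{\obbX}$ (it is a free module on the $\theta_g$) together with semicontinuity of deformation type extend this to all of $T_{\Pic(\obbX)}$. The basis itself supplies the two markings the modular functor requires: the $w$-grading gives the marking of $\Pic$, and the distinguished basis elements $\theta_{Z\in\Sk(G)}$ give the marking of the boundary components (the divisorial loci cut out by the faces $Z^{\perp}\subset C$ of \cref{const:boundary}). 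Modularity of \cite[4.24]{MLPv1} thus produces a unique classifying morphism $\Phi:T_{\Pic(\obbX)}\to T_{\Pic(X)}$ of marked tori, together with an isomorphism $(A_{C,L_{\obbX}}//W,\partial)\cong\Phi^{*}(\cX,\cE)$. The first step forces $\Phi$ to agree with the natural projection $T_{\Pic(\obbX)}\twoheadrightarrow T_{\Pic(X)}$ induced by $\Pic(X)\hookrightarrow\Pic(\obbX)$ over the dense open $\{\pi\neq 0\}$, and by separatedness of the moduli functor $\Phi$ must equal this projection on all of $T_{\Pic(\obbX)}$, giving the asserted base extension.

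The main obstacle is the careful matching of markings and the verification that the modularity statement of \cite[4.24]{MLPv1} applies to families with du Val (not only smooth) fibres off the boundary together with the particular marking data coming from theta functions; once one accepts from \cref{prop:lpsings} and the blowup/blowdown construction of quniversal families that the relevant moduli functor is exactly the one represented by $(\cX,\cE)\to T_{\Pic(X)}$, the rest is a formal glueing of the interior identification \cref{cor:LZ} with the modular extension across $\partial$.
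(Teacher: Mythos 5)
Your proof is correct and takes essentially the same approach as the paper: both reduce the statement to the interior identification of \cref{cor:LZ} plus the deformation-type match of \cref{cor:tm}, with modularity of the universal family of \cite[4.24]{MLPv1} supplying the glue across $\partial$. The paper's own proof is only two sentences and leaves the modularity step implicit, whereas you spell out the classifying morphism $\Phi$ and the marking data explicitly, and you correctly flag the extension from the generic locus (where \cref{cor:tm} applies) to all of $T_{\Pic(\obbX)}$ — a point the paper glosses over.
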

    \begin{proof} By \cref{cor:LZ} the analogous statement holds for
      the complement of the boundaries (e.g. $\cE \subset \cX$). So we
      just need to know the fibrewise compactifications have the same
      deformation type, and this follows from \cref{cor:tm}.
    \end{proof}

    We indicate the family from \cref{prop:LZ} by
    $(\cX,\cE) \to T_{\obbX}$.

    \begin{theorem} \label{thm:mpp}  For each GIT general $L \in \Ample(X)$.
      The VGIT map
      $$
      (A_{C,L_{\obbX}}//L,\partial) \dasharrow
      (A_{C,L_{\obbX}}//W,\partial)
      $$
      gives a small resolution.

      Choose any splitting $T_{\Pic(X)} \subset T_{\obbX}$. The restriction
      $$(
      A_{C,L_{\obbX}}//L,\partial)|_{T_{\Pic(X)}}  \dasharrow
      (A_{C,L_{\obbX}}//W,\partial) = \cX
      $$
      is the small resolution $\bbX \dasharrow \cX$ for one of
      the quniversal families. 
    \end{theorem}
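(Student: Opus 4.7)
The plan is to combine the modular identification from \cref{prop:LZ} with the VGIT machinery of \cref{sec:GITbasics} and the fiberwise analysis in \cref{prop:induct}, \cref{prop:lpsings}, and \cref{prop:fibsing}. By \cref{prop:LZ}, the target $(A_{C,L_{\obbX}}//W,\partial)$ is the pullback of the universal family $(\cX,\cE)$ along $T_{\obbX} \to T_{\Pic(X)}$. For GIT general $L \in \Ample(X)$ the linearization sits in the interior of a maximal GIT chamber $\sigma(L)$, and the change-of-linearization machinery of \cref{prop:bound} and \cref{prop:adj} produces a canonical birational map
\[
f \colon (A_{C,L_{\obbX}}//L,\partial) \dashrightarrow (A_{C,L_{\obbX}}//W,\partial).
\]

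The core step is to show $f$ is a small resolution. Fiberwise smoothness of the source is established by \cref{prop:induct}: part (2) near the boundary, part (4) over the generic locus of \cref{def:gen}, and part (3) on the remaining locus, where the source fiber is realized as the minimal resolution of the target du Val Looijenga pair from \cref{prop:lpsings}. The source total space is then smooth since the base is a torus. On and near $\partial$, \cref{prop:induct}(1) says $f$ is regular and restricts to an isomorphism on $\partial$; over the generic locus, \cref{prop:induct}(4) says $f$ is an isomorphism. Exceptional curves of $f$ therefore only appear fiberwise over the nongeneric locus, a proper closed subset of $T_{\obbX}$. These fiberwise exceptional curves assemble into a locus of codimension at least two in the smooth source, and the target's singular locus has codimension at least two by \cref{prop:fibsing}, so $f$ is small.

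For the second statement, I would fix the splitting $T_{\Pic(X)} \subset T_{\obbX}$ and restrict everything to $T_{\Pic(X)}$. The target becomes $\cX$ by \cref{prop:LZ}, and the restricted source is a smooth family of Looijenga pairs deformation-equivalent to $(X,E)$ by \cref{cor:tm}, admitting a small birational modification to $\cX$. By the discussion of \cref{sec:quf} and \cref{rem:sqm} (using \cite[\S 3--4]{MLPv1}), the smooth SQMs of $\cX$ over $T_{\Pic(X)}$ are precisely the quniversal families, indexed by the maximal cones of $\MovFan(\bbX)$; the chamber $\sigma(L)$ selects one of them via the \cite{HK96} identification of GIT chambers with Mori/Mov chambers.

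The hardest step will be the smallness argument: assembling the fiberwise exceptional data of \cref{prop:induct} into a codimension bound in the total space of the source. This requires controlling the dimension of the nongeneric locus in $T_{\obbX}$ and verifying that no components of the exceptional locus accumulate along the boundary---the latter guaranteed by \cref{prop:induct}(1). A secondary subtlety is matching, under the identification of Picard groups, the GIT chamber structure on $A_{C,L_{\obbX}}$ with the Mov chamber structure on $\bbX$, so that varying $\sigma(L)$ traces out the full set of quniversal families.
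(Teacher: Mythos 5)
Your overall framing (reduce to the identification \cref{prop:LZ}, argue via \cref{prop:induct} over the generic/boundary loci, then invoke MovFan $=$ GIT fan to select a quniversal family) overlaps in outline with the paper, and your smallness bookkeeping over the generic locus matches the paper's use of the $T^{\sm}$ case plus \cref{prop:amps}. But there is a genuine gap in how you establish that the source $(A_{C,L_{\obbX}}//L,\partial)$ is smooth (so that the small modification is a small \emph{resolution}), and this is where the paper's key idea appears.

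You assert ``fiberwise smoothness of the source,'' citing \cref{prop:induct}(3) on the nongeneric locus and asserting that the source fiber ``is realized as the minimal resolution of the target du Val Looijenga pair.'' That is not what \cref{prop:induct}(3) says: it only says the induced map of \emph{minimal resolutions} $\widetilde{A_C//M_1}\dasharrow\widetilde{A_C//M_2}$ is an isomorphism, and \cref{prop:lpsings} only gives that each fiber has at worst du Val singularities off $E'$. Nothing you cite shows the fiber of $A_C//L$ over a nongeneric point is actually smooth rather than merely du Val; establishing that smoothness is essentially the content of the theorem, so it cannot be assumed. Your further step ``the source total space is then smooth since the base is a torus'' also needs flatness of the family, which is not addressed.

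The paper's route around this is different in kind. After establishing the isomorphism over $T^{\sm}$ and (via \cref{prop:amps}) that the birational map between the full families is a small modification, it reduces by the base-extension construction of \cref{sec:canmir} to the restriction over a splitting $T_{\Pic(X)}\subset T_{\obbX}$, and then argues \emph{Mori-theoretically}: by \cite[4.14]{MLPv1} every small $\bbQ$-factorial modification of $\bbX$ over $T_{\Pic(X)}$ is another quniversal family, hence smooth. So it suffices to show $(A_{C,L_{\obbX}}//L)|_{T_{\Pic(X)}}$ is $\bbQ$-factorial with the full Picard group $\Pic(X)$; this is done by descending each weight $M\in\Pic(X)$ to a $\bbQ$-line bundle on the GIT quotient and checking injectivity of the resulting map $\Pic(X)\to\Pic$ by restricting to a smooth fiber over the generic locus. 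This Picard-rank/$\bbQ$-factoriality argument is the missing piece you would need to replace your incorrect fiberwise-smoothness claim.
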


    \begin{proof} By ??, for some marking of the boundary, the marked period
      point for $(\cX,\cE) \to T_{\obbX}$ is the natural map
      $T_{\obbX} \to T_{\bbX}= T_{X}$
      (induced by the open set $\bbX \subset \obbX$).

      Consider first the restrictions of the families to 
      $T^{\sm}_{\obbX}$, the inverse image of $T^{\sm} \subset T_{\Pic(X)}$. Here
      both families are smooth. 
    By \cref{prop:induct}
    and \cref{cor:tm} the induced rational map on each fibre surface
    is an isomorphism,
    so to show the map is an isomorphism it is enough to show it is regular.
    By \cref{cor:regular} it is a regular isomorphism in a neighborhood of
    the boundary. But the boundary is ample on each fibre 
    it follows there is no exceptional locus
    for the rational map in either direction, and thus the map is an isomorphism.
    
    Now consider the full families. By the $T^{\sm}$ case, and \cref{prop:amps}
    the birational map is a small modification. So
    we just need to show  $(A_{C,L_{\obbX}}//W,\partial) \to T_{\Pic(\obbX)}$ is
    smooth. For this its enough to consider the restriction to any splitting
    $T_{\Pic(X)} \subset T_{\Pic(\obbX)}$,
    as the full family is iso to a base extension -- see the construction in
    \cref{sec:cm}. Each quniversal family gives a small resolution
    $\bbX \to \cX$ (the base of all families is now $T_{\Pic(X)}$. And by
    \cite[4.14]{MLPv1}, each small $\bbQ$-factorial modification of $\bbX$
    is another quniversal family. So it is enough to show
    $(A_{C,L_{\obbX}}//L)|_{T_{\Pic(X)}}$ has the full $\bbQ$-Picard group of
    $\bbX$, namely $\Pic(X)$. But each $\bbQ$-line bundle on the target
    of
    $$
    (A_{C,L_{\obbX}}//L)|_{T_{\Pic(X)}} \dasharrow \bbX
    $$
    pullbacks back
    to a $\bbQ$-line bundle on the domain, namely the line bundle on the GIT
    quotient corresponding to this weight, and this pullback is injective,
    because we can further restrict to the generic locus (see \cref{def:gen}  ) $T_{\Pic(X)}^{\sm}$, $\Pic(\bbX) = \Pic(X)$
    under restriction to any smooth fibre. This completes the proof.
    \end{proof}

      \begin{corollary} Let $T_{X} \subset T_{\obbX}$ be a splitting. Take
        $A \in \Ample(X)$ GIT general. There is a marking of
        the boundary of $(A_{C,L_{\obbX}}//A,\partial)|_{T_X} \to T_X$
        so that the marked period point is the identity (where $\Pic$
        is marked via \cref{thm:mpp}. This is isomorphic to the quniversal
        family associated to $A$, see \cref{thm:mpp}.
      \end{corollary}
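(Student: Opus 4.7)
The plan is to deduce this corollary essentially formally from \cref{thm:mpp} together with the identification of the marked period map for the universal family recorded at the start of its proof. First, I would apply \cref{thm:mpp} to identify the restricted family $(A_{C,L_{\obbX}}//A,\partial)|_{T_X} \to T_X$ with the quniversal family $\bbX \to T_{\Pic(X)}$ determined by the chamber of $\MovFan(\bbX)$ in $\Nef(X)$ containing $A$, via \cite[4.14]{MLPv1}. This simultaneously produces the canonical marking of $\Pic$ through the identification $\Pic(\bbX) = \Pic(X)$ already asserted in \cref{thm:mpp}.

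Next, I would reuse the input invoked at the start of the proof of \cref{thm:mpp}: for an appropriate marking of the boundary, the marked period map for the universal family $(\cX,\cE) \to T_{\obbX}$ equals the canonical surjection $T_{\obbX} \twoheadrightarrow T_X$ induced by the restriction $\Pic(\obbX) \twoheadrightarrow \Pic(\bbX) = \Pic(X)$. Pulling back this boundary marking along the splitting $T_X \hookrightarrow T_{\obbX}$ and unwinding definitions, the period map of the restricted family becomes the composition $T_X \hookrightarrow T_{\obbX} \twoheadrightarrow T_X$, which is the identity by the very definition of a splitting. The isomorphism of the restricted family with the quniversal family associated to $A$ is then a direct consequence of \cref{thm:mpp}.

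The main issue is bookkeeping: I need to verify that the boundary marking on $\cX \to T_{\obbX}$ for which the universal period map is the natural surjection restricts compatibly to a boundary marking on the quniversal family $\bbX \to T_X$, in a way consistent with the small resolution $\bbX \dasharrow \cX$ of \cref{thm:mpp}. This should reduce to the $\bbG_m^E$-equivariance recorded in \cref{thm:modular} together with the fact, established in the proof of \cref{thm:mpp}, that the small modification $\bbX \dasharrow \cX$ is a regular isomorphism in a neighborhood of the boundary, so that boundary markings transfer cleanly between the two sides.
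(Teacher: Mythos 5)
The paper supplies no proof of this corollary; it is stated immediately after the proof of \cref{thm:mpp} and left to the reader. Your reconstruction is sound and is essentially what the paper has in mind: the claim about the marked period point is obtained by transporting the boundary marking that makes the period map of $(\cX,\cE)\to T_{\obbX}$ the natural surjection $T_{\obbX}\twoheadrightarrow T_X$ (the unproved "By ??" assertion opening the proof of \cref{thm:mpp}) along the splitting $T_X\hookrightarrow T_{\obbX}$; the composite is the identity by definition of a splitting, and markings do transfer across $\bbX\dasharrow\cX$ because the small modification is a regular isomorphism near the boundary (\cref{prop:amps}(2), invoked in the proof of \cref{thm:mpp}).

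One point you should make explicit rather than gesture at: \cref{thm:mpp} only asserts that the restriction is "one of" the quniversal families, not which one. To pin it down as the quniversal family associated to $A$, observe that the ample cone of the GIT quotient $A_{C}//A$ over the base is the GIT chamber of $A$ (standard VGIT, as in \cite{HK96}), and by \cite[4.14]{MLPv1} the GIT fan coincides with $\MovFan(\bbX)$, whose maximal cones index the quniversal families by their nef cones. That identification is what justifies your phrase "the quniversal family determined by the chamber containing $A$" and closes the remaining gap between the theorem's "one of" and the corollary's "associated to $A$."
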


  \begin{remark} \label{rem:neg} The theory of the quniversal families in
    \cite{MLPv1} does not require $U$ to be affine. The domain of the
    rational map in \cref{thm:mpp} also makes sense in this generality --
    as noted in \cref{rem:small} it only depends on the formal GHKI family.
    We 
    expect the theorem to hold, with the same proof, in general. It is natural
    to hope this could be
    used to give a new proof of the main result of \cite{HPV24} -- one
    that does not use  the
    log Kulikov families and results of Friedman-Morgan (both special to
    dimension two). 
    Alternatively we expect \cref{thm:mpp} could be deduced (in full generality)
    from  the methods of \cite{HPV24}.
    \end{remark} 

\begin{proposition} \label{prop:sss} The following hold:
    \begin{enumerate}
    \item For any $L \in \Ample(X)$ the strictly semi-stable locus
      $A_{C,L_X}^{\msss} \subset A_{C,L,X}^{\mss}$ has codimension at least two.
      \item  For all $L_1,L_2 \in \Ample(X)$,
    The $A_{C,L_X}^{\mss}(L_i)$ are equal outside codimension two (i.e. there
    are codimension two closed subsets of each whose complements are equal).
    The same holds for the stable loci.
    \end{enumerate}
  \end{proposition}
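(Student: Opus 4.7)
The plan is to deduce both statements from \cref{prop:interiors} by showing that every class in $\Ample(X)$ lies in the interior of a single maximal chamber of the GIT fan $\Sigma$ for the $T_{A_1(X,\bbZ)}$-action on $\Spec(A_{C,L_X})$. First I verify the hypothesis $E^{\gp} = P = \Pic(X)$ of \cref{prop:interiors}: by the identification $A_{C,L_X} \cong \Cox(\bbX)$ supplied by \cref{thm:modular} (and established via \cref{thm:mpp}), $A_{C,L_X}$ has nonzero homogeneous components for every effective class on the rational surface $\bbX$, and since the effective cone of $\bbX$ is full-dimensional, its integer points generate $\Pic(X)$ as a group.

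Next I identify the GIT fan. By \cite{HK96}, the GIT fan for a torus acting on the Cox ring of a Mori dream space coincides with the movable fan of that space, and the $\Nef$ cone is one of its maximal chambers. Since $\bbX$ is log Fano by the affineness assumption on $U$, it is a Mori dream space by \cite{BCHM}, so $\Sigma = \MovFan(\bbX)$ with $\Nef(\bbX)$ a maximal chamber. Under the canonical identification $\Pic(X) = \Pic(\bbX)$, the cone $\Ample(X) = \Ample(\bbX)$ is exactly the interior of this maximal chamber.

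With this in hand, (1) is immediate: for any $L \in \Ample(X)$, $L$ lies in the interior of a maximal cone of $\Sigma$, so \cref{prop:interiors} gives $A^s(L) = A^{ss}(L)$, and the strictly semistable locus is empty (vacuously of codimension at least two). For (2), any two $L_1, L_2 \in \Ample(X)$ lie in the interior of the same maximal chamber, so by \cite{BH06} (as recalled in \cref{sec:GITbasics}) the semistable loci $A^{ss}(L_1)$ and $A^{ss}(L_2)$ are equal as open subsets of $\Spec(A_{C,L_X})$; combined with (1), the stable loci agree as well, so ``equal outside codimension two'' follows a fortiori. The only nontrivial point is the identification of $\Sigma$ with $\MovFan(\bbX)$ together with the characterisation of $\Ample(X)$ as the interior of a maximal chamber; no further analysis of punctured Berkovich disks or of the mirror construction is needed for this proposition.
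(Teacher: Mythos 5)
Your proposal has two serious problems, both concerning the structure of the GIT fan for the $T_{A_1(X,\bbZ)}$-action on $\Spec(A_{C,L_X})$.

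First, a circularity: you invoke the identification $A_{C,L_X} \cong \Cox(\bbX)$ from \cref{thm:modular}, but that identification is precisely what the paper is building toward, and it is established in \cref{cox:cor} via \cref{cor:dm} and \cref{prop:fut} --- both of which rely directly on \cref{prop:sss}. So the isomorphism with the Cox ring is not available at the point where \cref{prop:sss} must be proved, and neither is the consequent identification $\Sigma = \MovFan(\bbX)$.

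Second, and independently of the circularity, the geometric claim underlying your reduction is false. You assert that $\Ample(X)$ is exactly the interior of a single maximal cone of the GIT fan. But (see the remarks after \cref{thm:modular} and \cite[4.14]{MLPv1}) the Moving fan of $\bbX$ is the hyperplane arrangement on $\Nef(X)$ cut out by the roots --- the Weyl chamber decomposition. Whenever $(X,E)$ admits roots this subdivides $\Ample(X)$ into several maximal chambers; a fixed quniversal family $\bbX$ has relative nef cone equal to a single Weyl chamber, strictly smaller than $\Nef(X)$, and different chambers correspond to distinct SQMs $\bbX$ (cf.\ \cref{rem:sqm}). Consequently, two ample classes $L_1, L_2$ lying in different Weyl chambers have genuinely different semistable loci, and an ample $L$ on a wall of the arrangement has a nonempty strictly semistable locus, so \cref{prop:interiors} applied once does not give either claim. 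The whole point of \cref{prop:sss} is that these discrepancies, while nonempty, are of codimension $\geq 2$, and this requires an argument. The paper gets this from \cref{prop:amps}: for any ample linearization the semistable locus meets the boundary of the Looijenga pair in the same locus, consisting entirely of stable points, and the boundary is ample on the fibers over $T^{\sm}$; hence anything not in the common part is confined to a set of codimension $\geq 2$. That local argument uses only \cref{prop:bbY} and the GHKI toric charts, not the Cox-ring or Moving-fan identifications.
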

  \begin{proof} Consider (1). The strictly semi-stable locus is the inverse
    image of a closed subset of the GIT quotient \todo{give reference or
      a short proof}, and by \cref{prop:amps}, the inverse image of a closed
    set disjoint from the boundary of this Looijenga pair. Over $T^{\sm}$
    the boundary is ample, so this is codimension two. (1) follows. For (2),
    by (1), we need only consider the stable loci. Again the result follows
    from \cref{prop:amps} and positivity of the boundary. This completes
    the proof. \end{proof}

  \begin{proposition} \label{prop:fut} For $L \in \Ample(X)$ GIT general,
    and consider the quotient 
     $ q: A_{C,X}^{\mss}(L)  \rightarrow    A_{C,X}^{\mss}(L)/T_{A_1(X,\bbZ)} = \bbX$
     (last equality by \cref{thm:mpp}. The
     linearized line bundle given by $M \in \Pic(X)$ descends to the
     line bundle $M \in \Pic(\bbX) = \Pic(X)$.

     $T_{A_1(X,\bbZ)}$ acts freely and properly on $A_{C,X}^{\mss}(L)$ and
     the quotient $q$ is the universal torsor of $\bbX$. 
  \end{proposition}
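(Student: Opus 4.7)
The plan is to deduce the proposition from the identification $A_{C,L_X} = \Cox(\bbX)$ established in \cref{thm:modular}, combined with the Hu--Keel theory of Mori dream spaces \cite{HK96}.

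First I would record the basic GIT set-up. Since $L$ is GIT general it lies in the interior of a maximal cone of the GIT fan, so by \cref{prop:interiors} we have $A_{C,L_X}^{\mss}(L) = A_{C,L_X}^{s}(L)$; hence $q$ is a geometric quotient, whose target is $\bbX$ by \cref{thm:mpp}. By \cref{prop:sss}, together with finite generation of $\Cox(\bbX)$ coming from \cref{thm:modular}, the complement of the stable locus in $\Spec(A_{C,L_X})$ has codimension at least two.

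Next I would handle line bundle descent. For each $M \in \Pic(X)$ the character-$M$ linearization on the trivial bundle over the stable locus descends to a line bundle $\cM$ on $\bbX$, whose section ring is the direct sum of weight-$nM$ pieces of the regular functions on $A_{C,L_X}^{s}(L)$. By the codimension-two statement above, together with normality of $\Spec(A_{C,L_X})$, these weight spaces coincide with those of $A_{C,L_X}$ itself, which by \cref{thm:modular} are $H^0(\bbX, nM)$ under the identification $\Pic(\bbX) = \Pic(X)$ of \cref{thm:mpp}. So $\cM = M$.

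For freeness and the identification with the universal torsor I would invoke \cite{HK96}. The identification $A_{C,L_X} = \Cox(\bbX)$ realises $\bbX$ as a Mori dream space; consequently $\Spec(A_{C,L_X}) = \Spec(\Cox(\bbX))$ contains the universal torsor $\cT_{\bbX}$ as an open subscheme whose complement has codimension at least two, and for any GIT general ample linearization Hu--Keel identify the GIT stable locus with $\cT_{\bbX}$ (and the quotient with $\bbX$). Thus $A_{C,L_X}^{s}(L) = \cT_{\bbX}$ and the $T_{A_1(X,\bbZ)}$-action is free by construction of $\cT_{\bbX}$. Properness then follows because a free torus action with separated (here projective) geometric quotient is automatically proper. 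The main conceptual input is the identification with $\cT_{\bbX}$; this rests entirely on \cref{thm:modular}, and every other ingredient is standard GIT.
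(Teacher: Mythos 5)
There is a genuine circularity in your argument. You propose to deduce the proposition from the identification $A_{C,L_X} = \Cox(\bbX)$, citing \cref{thm:modular}. But in the paper that identification is established in \cref{cox:cor}, whose proof rests on \cref{cor:dm}, and the proof of \cref{cor:dm} cites \cref{prop:fut} directly (to get $A_{C,L_X}^{\mss}(L) = \bbT$, the universal torsor). So you are using the conclusion of the proposition as an input to its own proof. The proof of \cref{thm:modular} is inductive on charge, but that induction does not give you \cref{thm:modular} for the same $X$ while proving \cref{prop:fut} for $X$; it only gives you the result for lower-charge blowdowns.

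The paper avoids this loop by never assuming the Cox ring identification. Instead it reduces, via \cref{prop:sss} and normality (so one can discard a codimension-two subset and hence vary $L$ among amples), to showing descent for linearized bundles $M = p^*(M')$ pulled back from toric models $b: X \to \oX$. On such a toric model the mirror algebra is a monoid algebra by the explicit toric structure of \cite{GHKIv2}, so $\cO(1)$ is locally free on the GIT quotient, and \cref{lem:descend} gives descent; then \cref{lem:gen} says that as one varies the toric model these pullbacks generate $\Pic(X)$. The first paragraph of the statement, once proved, yields freeness and properness via the (easy direction of) Kempf descent, and the universal-torsor identification from the classification of torus principal bundles by their descent homomorphism $\Pic(X) \to \Pic(W)$. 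Your line-bundle-descent and freeness/properness steps mirror the intended logic, but your main conceptual input --- the Cox ring identification --- is not yet available at this point of the paper, and without it the Hu--Keel machinery you invoke has nothing to apply to.
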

  \begin{proof}
    Suppose we have the first paragraph. Then the action is free and
    proper by (the easy direction of) the Kempf descent lemma \todo{reference}. 
    $T_{A_1(X,\bbZ)}$ principal bundles over any base $W$ are classified by
    the descent homomorphism $\Pic(X) \to \Pic(W)$. It follows $q$ is the
    universal torsor.

    So it is enough to prove the first paragraph. Equivalently,
    that the pullback of $M \in \Pic(\bbX) = \Pic(X)$ to the semi-stable locus
    is the linearized line bundle given by this character. For this
    we can remove a codimension two closed subset (because the mirror
    algebra is normal), and so by \cref{prop:sss} we can, for given $M$
    choose whichever ample $L$ we like.

    Let $b: X \to \oX$ be a toric model, and $M' \in \Ample(\oX)$
    general. If $\Nef(\oX)$ is in the boundary of the GIT
          chamber for $L$, we
          have an associated regular map of GIT quotients.

          For the linearization $M \coloneqq p^*(M')$, we have the basic induction
          statement on the deformation, and note the one parameter GHKI
          family for $(\oX,E,M')$ is toric, see \cite[3.40]{GHKIv2}. So the
          mirror algebra given by the formal case in \cref{rem:beinduct} is a purely toric
          construction, the mirror algebra is the monoid algebra (with appropriate
          coefficients) for a toric monoid. It follows from \cref{prop:bbY}
          that $A_{C_L}$ is the monoid algebra (for its coefficients)
          for the same monoid, and this is (a base extension of) the
          coordinate ring for $(\oX,M')$. In particular on the GIT quotient
          $\cO(1)$ is invertible, and so by \cref{lem:descend} the linearized
          line bundle on $A_C^{\mss}(M)$ descends. 
          Since
          $A_C^{\mss}(L) \subset A_C^{\mss}(M)$, the same holds for this linearized
          bundle on $A_C^{\mss}(L)$. But now varying the toric model,
          such line bundles generate
          $\Pic(X)$, by \cref{lem:gen}. This completes the proof. 
          \end{proof}

  \section{$A_{C,L_X} = \Cox(\bbX)$} \label{sec:cm}

  \begin{assumption} \label{ass:cox} Here we assume $X$ has
    enough toric models as in \cref{lem:gen} and a pair of disjoint internal $-1$ curves
    as in \cref{lem:ch-1}
          Note we can assume this for
          the proof of \cref{thm:modular} by \cref{prop:tbred}. 
        \end{assumption}

      \begin{lemma} \label{lem:descend} Let $A$ be a finitely generated $P$-graded domain,
        for a finite rank free Abelian group $P$ and consider the linearization
        of the algebraic torus $T \coloneqq \Hom(P,\bbG_m)$ on
        the trivial line bundle over $\Spec(A)$ determined by the character
        $w \in P$ (i.e. the setup of \cref{sec:GITbasics}). Let
        $R \coloneqq \oplus_{n \geq 0} A_{n \cdot w} \subset A$ be the ring of
        invariants. The following are equivalent:
        \begin{enumerate}
          \item The sheaf $\cO(1)$ on the GIT quotient
            $A//w \coloneqq \Proj(R)$ is locally free.
          \item The 
stabilizer of a point in the semi-stable locus
$A^{\mss}(w)$ is in the kernel of $w$, and
\item The Linearized line bundle associated to $w$ descends to the
  GIT quotient (ie the restriction to $A^{\mss}(w)$ is isomorphic as
  linearized line bundle to the pullback of a linebundle from the
  quotient (with its canonical linearization). 
  \end{enumerate} 
\end{lemma}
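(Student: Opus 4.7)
The plan is to deduce all three equivalences from Kempf's descent lemma for a torus-equivariant line bundle under a good GIT quotient. The implication (3) $\Rightarrow$ (2) is direct: if $L_w|_{A^{\mss}(w)}$ descends to a line bundle $M$ on $A//w$ as a linearized bundle, then for every semistable $x$ with stabilizer $T_x$, the action of $T_x$ on the fiber $(L_w)_x$ coincides with its action on $M_{p(x)}$, which is trivial by the defining property of descent. But the stabilizer acts on $(L_w)_x$ precisely by the character $w$, so $T_x \subset \ker(w)$.

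For (2) $\Rightarrow$ (3) I would invoke Kempf's descent lemma directly. A $T$-linearized line bundle on $A^{\mss}(w)$ descends to the good quotient $A//w$ iff for every closed-orbit point the stabilizer acts trivially on the fiber. By (2) every semistable stabilizer lies in $\ker(w)$, so in particular every closed-orbit stabilizer $T_x$ acts trivially on $(L_w)_x$, yielding the descent.

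The equivalence (1) $\Leftrightarrow$ (3) follows by identifying $\cO(1)$ with the descended bundle. If $L_w$ descends to $M$ then the equality $\bigoplus_n \Gamma(A//w, M^{\otimes n}) = \bigoplus_n A_{nw} = R$ identifies $M$ with $\cO(1)$ on $\Proj(R)$, so in particular $\cO(1)$ is a line bundle. Conversely, if $\cO(1)$ is locally free, then $p^*\cO(1)$ is a $T$-linearized line bundle on $A^{\mss}(w)$ whose $n$-th tensor power matches $L_w^{\otimes n}|_{A^{\mss}(w)}$ via the semi-invariant sections $f \in R_n = A_{nw}$ on the principal opens $\Spec(A)_f$; these local identifications patch to $p^*\cO(1) \cong L_w|_{A^{\mss}(w)}$ as linearized bundles, giving the descent in (3).

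The main subtlety is the grading bookkeeping in the last equivalence: $\cO(1)$ is defined through a shift in the $\bbN$-grading of $R = \bigoplus_n A_{nw}$, while $L_w$ arises from the $P$-character $w$. Since $R$ need not be generated in degree one, the local identification of $\cO(1)$ as a line bundle on $D_+(f)$ for $f \in R_n$ with $n > 1$ must be checked against potentially nontrivial $n$-th root data on the fibers of $p$, and this is precisely where the stabilizer-in-$\ker(w)$ hypothesis intervenes through Kempf's lemma.
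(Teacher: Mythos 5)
Your cycle of implications differs from the paper's, and the place where this matters is exactly the hard direction. You close the equivalence via $(1)\Rightarrow(3)\Rightarrow(2)\Rightarrow(3)\Rightarrow(1)$, so the nontrivial input in your scheme is $(1)\Rightarrow(3)$. The paper instead proves $(1)\Rightarrow(2)\Rightarrow(3)\Rightarrow(1)$, with $(1)\Rightarrow(2)$ as the substantive step, and this is a genuinely different (and cleaner) route, for the reason below. Your treatment of $(3)\Rightarrow(2)$, $(2)\Rightarrow(3)$ via Kempf descent, and $(3)\Rightarrow(1)$ all match the paper and are fine.

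The problem is that your argument for $(1)\Rightarrow(3)$ has a gap you do not close. You observe that on $\Spec(A)_f$ for $f\in R_n$ the semi-invariant $f$ identifies $p^*\cO(1)^{\otimes n}$ with $L_w^{\otimes n}$, and then assert that ``these local identifications patch to $p^*\cO(1)\cong L_w$.'' That is not automatic: knowing two line bundles have isomorphic $n$-th tensor powers over an open where $n>1$ does not give an isomorphism of the bundles themselves unless one controls $n$-torsion in the ($T$-equivariant) Picard group, which is precisely the point you flag in your last paragraph. But the resolution you then propose---that the ``stabilizer-in-$\ker(w)$ hypothesis intervenes through Kempf's lemma''---is circular: the stabilizer condition is exactly statement $(2)$, which in your cycle is downstream of $(1)\Rightarrow(3)$. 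You cannot invoke $(2)$ while proving $(1)\Rightarrow(3)$.

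The paper sidesteps this entirely by proving $(1)\Rightarrow(2)$ directly: after localizing at a suitable homogeneous $f$ so that $R(1)$ becomes free with generator $f\in A_w$ and $A^{\mss}(w)=\Spec(A_f)$, a one-line computation finishes it. For $x\in A^{\mss}(w)$ fixed by $\lambda$, one has
\[
f(x)=f(\lambda x)=(\lambda\cdot f)(x)=w(\lambda)f(x),
\]
and since $f(x)\neq 0$ on $\Spec(A_f)$ this forces $w(\lambda)=1$. The point is that localizing to make the generator a \emph{degree-one} semi-invariant $f\in A_w$ eliminates the $n$-th-root issue at its source. If you want to salvage $(1)\Rightarrow(3)$, you would need to justify that local picture (a free degree-one generator) first---and at that point you have essentially proved $(1)\Rightarrow(2)$, so the detour through $(3)$ is unnecessary.
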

\begin{proof}
  We let $L$ indicate the linearized line bundle associated to $w$. If
  $L$ descends, it descends to $\cO(1)$ so (3) implies (1). (3) implies (2)
  by the definition of the action on the linearized bundle, and on the
  pullback.
  (2) implies (3) by Kempf descent. So it's enough to show (1) implies (2):

  Note for each homogeneous $f \in R \subset A$,
  $(A_f)_0 = (R_f)_0$ (where these indicate the subring of degree $0$ elements)
  and $\Proj(R)$ is covered by $\Spec((R_f)_0)$, and $A^{\mss}(w)$ by
  $\Spec(A_f)$, over homogeneous $f \in R$ of positive degree. So
  replacing $A$ by various $A_f$ we can assume the graded $R$ module
  $R(1)$ is free, generated by $f \in A_w$. In particular
  $A^{\mss}(w) =\Spec(A_f) \subset \Spec(A)$. But now suppose
  $\lambda x = x$ for $x \in A^{\mss}(w)$. Then
  $$
  f(x) = f(\lambda x) = (\lambda f)(x) = (w(\lambda)f)(x) =
  w(\lambda) f(x)
  $$
  and so $\lambda \in \ker(w)$ as required.
  \end{proof} 
  Let $\bbT \to \bbX$ be the universal torsor, and 
  let $\bbG \to \bbU \coloneqq \bbX \setminus \bbE$ be its restriction.

        \begin{corollary} \label{cor:dm} $A_{\Sk(V),L_X} = \cO(\bbG)$.
                  \end{corollary}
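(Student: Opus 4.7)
The plan is to deduce $A_{\Sk(V),L_X} = \cO(\bbG)$ by localizing the section's headline identification $A_{C,L_X} = \Cox(\bbX) = \cO(\bbT)$ at the ideal of the boundary $\bbE \subset \bbX$; that identification first has to be assembled from the preceding results of Section \ref{sec:cm}.

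For the first step, by \cref{thm:mpp}, for GIT-general $L \in \Ample(X)$ the GIT quotient $A_{C,L_X}//L$ is identified with $\bbX$, and by \cref{prop:fut} the quotient map $A_{C,L_X}^{\mss}(L) \to \bbX$ is the universal torsor $\bbT \to \bbX$, with the linearized line bundle attached to each character $M \in \Pic(X)$ descending to $M \in \Pic(\bbX) = \Pic(X)$. Hence the $M$-weight piece of $A_{C,L_X}$ maps to $H^0(\bbX, M)$ via pullback, and by \cref{prop:sss} the complement of the semistable locus has codimension at least two. Combined with normality of the mirror algebra this promotes the map to an isomorphism in each graded piece, giving $A_{C,L_X} = \bigoplus_{M \in \Pic(X)} H^0(\bbX, M) = \Cox(\bbX) = \cO(\bbT)$.

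For the second step, I would invoke \cref{prop:action} (already used to derive \cref{cor:LZ}): with $\pi \in A_{C,L_X}$ the monomial of weight $W$ introduced just before \cref{cor:LZ}, one has $(A_{C,L_X})_\pi = A_{\Sk(V),L_X}$. Under the identification of the first step, $\pi$ corresponds to a section of $\cO_\bbX(W)$ vanishing precisely on the boundary $\bbE$, so inverting $\pi$ in $\cO(\bbT)$ recovers the functions on $\bbT$ with poles at worst over $\bbE$, namely $\cO(\bbG)$. Chaining the two localizations,
\[
A_{\Sk(V),L_X} \;=\; (A_{C,L_X})_\pi \;=\; \cO(\bbT)_\pi \;=\; \cO(\bbG).
\]

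The substantive obstacle is the first step, which is the entire content of Section \ref{sec:cm}; within it, the delicate point is the descent of the linearized bundle in \cref{lem:descend} and \cref{prop:fut}, which requires the VGIT machinery of Section \ref{sec:GITbasics} together with the toric-model reduction via \cref{lem:gen}, \cref{prop:bbY}, and \cref{prop:eqdef} to promote descent from pullbacks of ample classes on toric-model contractions to all of $\Pic(X)$. Once that is in place, the remaining localization steps are essentially formal, one via \cref{prop:action} on the algebra side and one via the identity $\cO(Y \setminus V(f)) = \cO(Y)_f$ on the geometric side.
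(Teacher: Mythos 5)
Your proposal inverts the logical order of the section and, in doing so, relies on a fact the paper never establishes at that point. You want to deduce $A_{\Sk(V),L_X} = \cO(\bbG)$ by first assembling $A_{C,L_X} = \Cox(\bbX)$, but in the paper the identification $A_{C,L_X} = \Cox(\bbX)$ is \cref{cox:cor}, which is proved \emph{after} and \emph{from} \cref{cor:dm}, together with the non-degeneracy and symmetry statements of \cref{prop:ndsym} (i.e.\ the results of \cite{CMMM}); it is not available as an input here. Your attempt to re-derive it from the earlier GIT results runs into a concrete gap: you assert that ``by \cref{prop:sss} the complement of the semistable locus has codimension at least two'' in $\Spec(A_{C,L_X})$, but \cref{prop:sss} says something different. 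Part (1) concerns the strictly semi-stable locus inside the semi-stable locus, and part (2) compares the semi-stable loci for different amples; neither bounds the codimension of the unstable locus in $\Spec(A_{C,L_X})$. Without that bound, normality does not promote $A_{C,L_X} \to \cO\bigl(A_{C,L_X}^{\mss}(L)\bigr) = \cO(\bbT) = \Cox(\bbX)$ to an isomorphism, so your first step does not go through.

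The paper's actual proof of \cref{cor:dm} sidesteps exactly this difficulty. Instead of trying to control $\Spec(A_{C,L_X})$ globally, it works inside the open set $\{\pi \neq 0\} = \Spec\bigl((A_{C,L_X})_\pi\bigr)$. Since $\pi$ is an invariant section for the weight-$W$ linearization (with $W$ ample), this open set is automatically contained in $A_{C,L_X}^{\mss}(W)$, and \cref{prop:sss}(2) then says $A_{C,L_X}^{\mss}(W)$ and $A_{C,L_X}^{\mss}(L)$ agree up to codimension two. So $\Spec\bigl((A_{C,L_X})_\pi\bigr)$ and $\{\pi \neq 0\} \cap A_{C,L_X}^{\mss}(L) = \bbG$ agree up to codimension two, and since both sides are normal, passing to regular functions and invoking $(A_{C,L_X})_\pi = A_{\Sk(V),L_X}$ from \cref{prop:action} finishes the proof. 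If you replace your first step with this localization-first argument, the remainder of your second step (identifying the non-vanishing locus of $\pi$ with $\bbG$ and applying \cref{prop:action}) is essentially correct and matches the paper.
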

        \begin{proof} Since by assumption $U$ is affine,
          there is an effective ample Weil divisor with support
          $E$. Then for the associated linearization, the corresponding
          monomial, $\pi$, in the $\theta_{Z \in \Sk(V)}$ gives an invariant section of
          this weight. The non-vanishing locus of this monomial,
          in $A_{C,L_X}^{\mss}(L) = \bbT$ (equality by \cref{prop:fut}) is
          $\bbG \subset \bbT$. 
          The semi-stable locus is the same, up to
          codimension two, for all amples, by \cref{prop:sss}.
          It follows that, up to codimension two, this non-vanishing
          locus in $A_{C,L_X}^{\mss}(L)$ is $\Spec((A_{C,L_X})_{\pi})$, and
          by \cref{prop:action} the localisation $(A_{C,L_X})_{\pi} = A_{\Sk(V)}$.
          Removing codimension two does not change the regular functions (on
          these normal spaces). The result follows. 
          \end{proof} 

          Now we follow the general philosophy of the three basic conjectures
          to prove $A_{C,\bbX} = \Cox(\bbX)$ -- a version of the double mirror
          conjecture is \cref{cor:dm}.

          Note each irreducible $Z \subset E$ corresponds to a component
          $\bbZ \subset \bbE \subset \bbX$ and then its inverse image
          in $\bbT \to \bbX$ and so a boundary divisor to $\bbG$, which we
          indicate $Z \in \Sk(\bbG)$.

          The following is a special case of the main results of \cite{CMMM}:
          \begin{proposition} \label{prop:ndsym} Notation as immediately above.
            The following hold:
            \begin{enumerate}
            \item (non-degeneracy) For $0 \neq \alpha_i \in L_X$ and $v_i \in
              \Sk(V)$, and $Z \subset E$ we have
              $$
              (\sum \alpha_i \theta_{v_i \in \Sk(V)})^{\trop}(Z \in \bbG)
              = \min_i \theta_{v_i}^{\trop}(Z \in \bbG).$$
            \item (symmetry)
              $$ \theta_{v \in \Sk(V)}^{\trop}(Z \in \Sk(\bbG)) =
              \theta_{Z \in \Sk(G)}^{\trop}(v).
              $$
              \end{enumerate}
            \end{proposition}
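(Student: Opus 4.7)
The plan is to reduce both identities to the main results of \cite{CMMM} via the cluster-algebraic identification of $(G, V)$. Concretely, I would first invoke \cite[4.4]{BGCA} and \cite[\S 3]{TravisCoxCluster}: the universal torsor $G = \cT|_U$ over the affine log CY interior is a Fock-Goncharov $\cA$-cluster variety, and its Fock-Goncharov $\cX$-type mirror coincides, up to coefficients, with the mirror $V$ constructed here via \cite{KY22}. So $(G, V)$ is a Fock-Goncharov mirror cluster pair to which \cite{CMMM} applies, and \cite{CMMM} directly yields the analogs of (1) and (2) for the absolute pair: the symmetry $\theta_{v \in \Sk(V)}^{\trop}(Z \in \Sk(G)) = \theta_{Z \in \Sk(G)}^{\trop}(v)$, and the non-degeneracy identity for linear combinations with scalar coefficients.

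Next I would transfer these absolute identities to the family-level statements requested in the proposition. The divisor $Z \in \Sk(\bbG)$ is the family version of $Z \in \Sk(G)$: it arises from the $T_{\Pic(X)}$-equivariant component $\bbZ \subset \bbE \subset \bbX$, pulled back to the universal torsor $\bbG \to \bbU$, and on a general fiber of $\bbG \to T_{\Pic(X)}$ it restricts to the original $Z \subset G$. Since the theta functions $\theta_{v \in \Sk(V)} \in A_{\Sk(V),L_X}$ are $T_{\Pic(X)}$-homogeneous sections of $\cO(\bbG)$ and each coefficient $\alpha_i \in L_X$ is a unit at a sufficiently general fiber, the valuations at $Z \in \Sk(\bbG)$ can be computed fiberwise, reducing (1) and (2) to the absolute statements obtained from \cite{CMMM}.

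The main obstacle will be the coefficient comparison in this transfer step: one must verify that \cite{CMMM}'s statements, typically phrased with a particular cluster-algebraic coefficient system (principal coefficients or a close variant), transfer cleanly to the coefficient ring $L_X = \bbZ[\Pic(X)^*]$ coming from our $T_{A_1(X,\bbZ)}$-action on $G$. This is expected to follow from the $T_{\Pic(X)}$-equivariance of the mirror construction together with the basic convexity \cite[20.1]{KY22}, but the precise compatibility of equivariant structures and coefficient systems merits careful attention.
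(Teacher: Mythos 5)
Your high-level strategy matches the paper's: identify $(G,V)$ (equivalently, the fibres $\bbG_t$ and $V$) as a Fock--Goncharov dual cluster pair via \cite[4.4]{BGCA} and \cite[\S 3]{TravisCoxCluster}, and then invoke \cite{CMMM}. But you are missing the pivot on which the paper's argument turns. The paper does not treat \cite{CMMM}'s symmetry and non-degeneracy as unconditionally applicable ``out of the box.'' Rather, it first uses \cite{CMMM}'s finiteness of broken lines ($\Theta = \cX^{\trop}$ in the notation of \cite{GHKK}) \emph{together with} \cref{cor:dm} ($A_{\Sk(V),L_X} = \cO(\bbG)$, the double-mirror identification established earlier in this paper) to verify the Full Fock--Goncharov conjecture for these cluster varieties, citing \cite[0.6]{GHKK}. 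Only after that does it appeal to \cite{CMMM}'s general symmetry and non-degeneracy theorems. Without this verification, the theta functions appearing in \cite{CMMM}'s statements cannot be identified with the $\theta_{v\in\Sk(V)}$ and $\theta_{Z\in\Sk(G)}$ of \cite{KY22} that occur in the proposition, so saying ``\cite{CMMM} directly yields the analogs of (1) and (2)'' skips the essential step.

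Your additional worry about transferring from the absolute pair $(G,V)$ to the family $\bbG \to T_{\Pic(X)}$, and about compatibility of cluster coefficient systems with $L_X$, is not unreasonable to flag, but it is not where the paper locates the difficulty: the paper handles the family aspect by working with the fibres $\bbG_t \to \bbX_t$ over $t \in T^{\sm}$ from the start, and the identification of the relevant coefficient structures is precisely what is supplied by \cite[4.4]{BGCA} and \cite[\S 3]{TravisCoxCluster} (the paper quietly relies on these where you correctly cite them). The concrete fix to your proposal is therefore not a more careful transfer argument but an explicit appeal to \cref{cor:dm} to close the loop from broken-line finiteness to the Full Fock--Goncharov conjecture before invoking \cite{CMMM}'s symmetry and non-degeneracy.
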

            \begin{proof} The fibres $\bbG_t \to \bbX_t$, $t \in T^{\sm}$ and
              $V$ are Fock-Goncharov dual cluster varieties -- the second of $\cX$-type,
              the first associated $\cA_t$ cluster varieties. For these particular
              cluster varieties \cite{CMMM} proves finiteness of
              broken lines, i.e. $\Theta = \cX^{\trop}$ in the notation of \cite{GHKK}.
              This, together with \cref{cor:dm} gives the Full Fock Goncharov
              conjecture in these cases (see \cite[0.6]{GHKK}). Now the symmetry
              and non-degeneracy results of \cite{CMMM} (which are general statements
              for any pair of Fock-Goncharov dual cluster varieties) immediately
              imply (1-2). \end{proof}

            \begin{corollary} \label{cox:cor} We have a canonical isomorphism of $\Pic(X)$ graded
              rings $A_{C,L_X} = \Cox(\bbX)$. \end{corollary}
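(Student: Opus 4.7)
The plan is to identify $\Cox(\bbX)=\cO(\bbT)$ with $A_{C,L_X}$ as subrings of $\cO(\bbG)=A_{\Sk(V),L_X}$ (the latter equality being \cref{cor:dm}), using the symmetry and non-degeneracy supplied by \cref{prop:ndsym} to match the respective bases. This is precisely the payoff of the three-conjecture philosophy set out in \cref{rem:conjs}: symmetry plus non-degeneracy plus double mirror ($=$ \cref{cor:dm}) forces $\Cox=A_C$.

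First I would record a regularity criterion. Since $\bbT$ is a $T_{A_1(X,\bbZ)}$-torsor over the normal scheme $\bbX$ it is normal, and $\bbT\setminus\bbG=\bigcup_{Z\subset E}u^{-1}(\bbZ)$ is a union of prime divisors indexed by the irreducible components $Z\subset E$. Hence $f\in\cO(\bbG)$ lies in $\cO(\bbT)$ iff $\ord_{u^{-1}(\bbZ)}(f)\ge 0$ for every such $Z$. By construction the divisorial valuation associated to $u^{-1}(\bbZ)$ is the point $Z\in\Sk(\bbG)(\bbZ)$ appearing in \cref{prop:ndsym}, so
\[
\ord_{u^{-1}(\bbZ)}(f) \;=\; f^{\trop}(Z\in\Sk(\bbG)).
\]

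Next I would write $f=\sum_i\alpha_i\,\theta_{v_i\in\Sk(V)}$ with $0\ne\alpha_i\in L_X$ and pairwise distinct $v_i\in\Sk(V)(\bbZ)$, and chain the two parts of \cref{prop:ndsym}:
\[
f^{\trop}(Z\in\Sk(\bbG))
 \;\stackrel{(1)}{=}\; \min_i \theta_{v_i}^{\trop}(Z\in\Sk(\bbG))
 \;\stackrel{(2)}{=}\; \min_i \theta_{Z\in\Sk(G)}^{\trop}(v_i),
\]
where (1) is non-degeneracy and (2) is symmetry. Thus $f\in\cO(\bbT)$ iff $\theta_{Z\in\Sk(G)}^{\trop}(v_i)\ge 0$ for every $i$ and every $Z\subset E$, i.e.\ iff every $v_i$ lies in the cone $C\subset\Sk(V)$ (by the definition of $C$ in \cref{const:mirror}). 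This identifies $\cO(\bbT)$ with $A_{C,L_X}$ as free $L_X$-modules with matching $\theta$-bases; the identification is canonical since it is determined basis-element by basis-element.

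Finally I would verify that the two $\Pic(X)$-gradings agree. The grading on $\Cox(\bbX)$ comes from the $T_{A_1(X,\bbZ)}$-action on $\bbT$, and by \cref{prop:fut} this action on the semi-stable locus $A_{C,L_X}^{\mss}(L)=\bbT$ coincides with the restriction of the $T_{\Pic(X)}$-action on $V$ whose tropicalization is the weight map $w:\Sk(V)(\bbZ)\to\Pic(X)$ of \cref{def:skVweight}. Since each $\theta_v$ is homogeneous of degree $w(v)$ (see \cref{const:mirror}), the two $\Pic(X)$-gradings agree on the common basis, completing the proof. The main obstacle \textemdash\ transporting the divisorial inequalities cutting out $C$ on $\Sk(V)$ to the order-of-vanishing conditions cutting out $\cO(\bbT)\subset\cO(\bbG)$ \textemdash\ is already overcome by \cref{prop:ndsym}; everything else is bookkeeping.
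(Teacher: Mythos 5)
Your proof is correct and is essentially the argument the paper delegates to \cite[24.6]{KY22}: identify $\cO(\bbG)$ with $A_{\Sk(V),L_X}$ via \cref{cor:dm}, characterize $\cO(\bbT)\subset\cO(\bbG)$ by non-negativity of orders along the boundary divisors (using normality of $\bbT$), and translate these inequalities into the cone condition $v_i\in C$ via non-degeneracy and symmetry from \cref{prop:ndsym}. You have simply written out the details the paper leaves to the reference, including the routine verification that the $\Pic(X)$-gradings match through the $T_{A_1(X,\bbZ)}$-equivariance of the identification.
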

            \begin{proof} This follows from the general philosophy of the three
              conjectures in \cref{rem:conjs}, using \cref{cor:dm} and \cref{prop:ndsym}. See \cite[24.6]{KY22}. 
              \end{proof}

          \subsection{The analogs of $1_Z \in \Cox(X)$ in $A_C$}
The irreducible components $Z \subset E$ give invertible functions on $G$:
    any section of any line bundle on $X$ gives a function on
    the universal torsor $\cT$; here
    we take $ 1_Z \in H^0(X,\cO(Z))$.  
    Together this gives a map 
    $\cT \to T^E$, where the target is the split torus with character
  lattice $\bbZ_E$, free Abelian with basis the irreducible components,
  which we can view as 
  Weil divisors supported on $E$. We write $\bbZ^E$ for the dual.

  As explained in \cref{sec:ta}, by the basic convexity statement
\cite[20.1]{KY22}, \cite[15.6]{KY20}, this
gives a $\bbZ^E$-grading on the mirror    algebra $A_{\Sk(V)}$ or equivalently induces an equivariant action of the dual torus $T_E$ on
$V \to T_{\Pic(X)}$, where $T_E \to T_{\Pic}$ is induced by $\bbZ_E \to \Pic$.

  This gives, by the general construction of \cite{LoganThesis}, a translation action of
  $\bbZ_{\bbR}$ on $\Sk(V)$, and in particular a copy
  $\bbZ_E = \bbZ_E + 0 \subset \Sk(V)(\bbZ)$. Let 
  $Z \in \bbZ_E \subset \Sk(V)(\bbZ)$ indicate the corresponding basis element
  of $\bbZ_E$ (for each irreducible $Z \subset E$). 

\begin{notation} \label{nota:Z>0} We write e.g. $Z > 0 \subset \Sk(V)$ for
    the locus $\theta^{\trop}_{Z \in \Sk(G)} > 0$.  As noted in \cref{const:boundary}
    $A_{Z \geq 0} \subset A_{\Sk(V)}$ is a subalgebra. We will sometimes write
    $[Z > 0] \subset A_{Z \geq 0}$ for the corresponding ideal -- the ideal
    with basis $[Z > 0] \subset \Sk(V)$. Note
    $A_C/([Z > 0] \cap C)$ has basis $Z^{\perp} \cap C$.
        \end{notation}

  \begin{proposition} \label{prop:action} Notation as
    immediately above. The following hold:
      \begin{enumerate}
      \item For $e \in \bbZ_E$, $\theta_e \cdot \theta_v = \theta_{e + v}$.
      \item For $f$ a $T_E$ eigenfunction of weight $w$,
        $f^{\trop}(v + e) = f^{\trop}(v) + w(e)$.
      \item For $W,Z \subset E$ irreducible components,
        $\theta_{W \in \Sk(G)}^{\trop}(Z \in \Sk(V)) =\delta_{Z,W}$
      \item The ideal $[Z >0] \subset A_C$ is generated by
        $\Theta_{Z \in \Sk(V)}$. 
      \end{enumerate}

      Moreover: The localisation of $A_C$ at the product of all
      $\theta_{Z \in \Sk(V)}$ is $A_{\Sk(V)(\bbZ)}$.
          \end{proposition}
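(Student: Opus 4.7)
The plan is to establish (2) first as a general tropicalization principle from \cite{LoganThesis}, deduce (1) from (2) together with the mirror-algebra structure, derive (3) by a direct computation of the $T_E$-weight of $\theta_{W \in \Sk(G)}$, and finally obtain (4) and the localization statement as combinatorial consequences of (1)--(3).

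Statement (2) is immediate from the construction recalled immediately before the proposition: the translation action of $\bbZ_E \otimes \bbR$ on $\Sk(V)$ is by design the tropicalization of the $T_E$-action on $V$, so for a $T_E$-eigenfunction $f$ of weight $w \in \bbZ^E$, the identity $f(t \cdot x) = t^w f(x)$ tropicalizes at any divisorial valuation $v$ and cocharacter $e \in \bbZ_E$ to give $f^{\trop}(v + e) = f^{\trop}(v) + w(e)$. For (1), expand $\theta_e \cdot \theta_v = \sum_q c_q \theta_q$ via the broken-line structure constants of \cite[1.2]{KY22}: the basic convexity \cite[15.8]{KY20}, \cite[20.1]{KY22} forces $\theta_q^{\trop} \geq \theta_e^{\trop} + \theta_v^{\trop}$ pointwise whenever $c_q \neq 0$, while $T_E$-equivariance pins the weight of each such $q$ to $w(e) + w(v)$; the key observation is that $\theta_e$ for $e \in \bbZ_E$ is a monomial, since the lattice points $\bbZ_E \subset \Sk(V)$ are frozen directions of the $T_E$-action, so the expansion collapses to a single term $\theta_{e+v}$ with coefficient one.

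For (3), the $T_E$-weight of $\theta_{W \in \Sk(G)}$ is the dual basis vector to $W$ in $\bbZ^E$: by construction $T_E$ acts on $\cT$ via the sections $1_Z \in H^0(X, \cO(Z))$, and the defining divisor $u^{-1}(W) \subset \cT$ is scaled by $\delta_{Z,W}$. Combined with $\theta_{W \in \Sk(G)}^{\trop}(0) = 0$ at a $T_E$-fixed basepoint of $\Sk(V)$ (where each $\theta_{W \in \Sk(G)}$ is a nonvanishing unit), (2) yields $\theta_{W \in \Sk(G)}^{\trop}(Z) = \delta_{Z,W}$. Statement (4) then follows: given $w \in C$ with $\theta_{Z \in \Sk(G)}^{\trop}(w) > 0$, (2)--(3) imply $\theta_{U \in \Sk(G)}^{\trop}(w - Z) \geq 0$ for every component $U \subset E$, so $w - Z \in C$, and (1) gives $\theta_w = \theta_Z \cdot \theta_{w - Z}$. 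For the localization, any $v \in \Sk(V)(\bbZ)$ can be shifted into $C$ by adding $\sum_Z n_Z \cdot Z$ for sufficiently large $n_Z \in \bbN$, so $\theta_v = \prod_Z \theta_Z^{-n_Z} \cdot \theta_{v + \sum_Z n_Z Z}$ lies in $(A_C)_{\prod_Z \theta_Z}$, exhibiting the localization as all of $A_{\Sk(V)}$.

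The main obstacle is justifying the collapse to a single term in (1). The principle is standard -- $\theta_e$ should act as a monomial along a frozen lattice direction -- but making this rigorous requires ruling out stray broken-line contributions. The cleanest route is to invoke the non-degeneracy statement from \cite{CMMM} applied as in \cref{prop:ndsym}: at each test point $Z \in \Sk(\bbG)$, it pins down the unique $q$ compatible with both the tropical inequality and the $T_E$-weight, forcing $c_q = \delta_{q, e+v}$.
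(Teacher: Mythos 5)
Your treatment of (2), (3), (4) and the localization closely tracks the paper's proof. (2) is cited verbatim from the general torus-action tropicalization formalism of \cite{LoganThesis}; (3) follows from (2) by computing the $T_E$-weight of $\theta_{W\in\Sk(G)}$ to be the dual basis vector (the paper phrases this as $(1_Z)^{\trop}(W)=\delta_{Z,W}$, but it is the same computation); (4) is exactly the paper's shift argument $v\mapsto v+(-Z)$ using (1)--(3); and the localization is a slight elaboration of the same idea, shifting an arbitrary point into $C$ along $\bbZ_E$.

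The genuine deviation is (1), and it introduces a circularity. The paper treats (1), like (2), as a ready-made \emph{input} from \cite{LoganThesis}: for a torus action on an affine log CY with maximal boundary, the theta functions $\theta_e$ attached to the induced copy of the cocharacter lattice in the skeleton are invertible, and multiplication by them realizes the translation. No cluster structure and no non-degeneracy is needed for this. Your proposed route, by contrast, expands $\theta_e\cdot\theta_v$ in broken lines, observes correctly that basic convexity \cite[15.8]{KY20}, \cite[20.1]{KY22} alone does not collapse the sum to a single term, and then invokes the non-degeneracy of \cite{CMMM} as packaged in \cref{prop:ndsym} to finish. But \cref{prop:ndsym} is proved from the Full Fock--Goncharov conjecture, which the paper derives from \cref{cor:dm}, and \cref{cor:dm} in turn uses precisely the localization statement of the proposition you are proving ($(A_{C,L_X})_\pi = A_{\Sk(V)}$). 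So invoking \cref{prop:ndsym} here closes a circle: \cref{prop:action} $\Rightarrow$ \cref{cor:dm} $\Rightarrow$ \cref{prop:ndsym} $\Rightarrow$ \cref{prop:action}. In the paper's logical architecture, \cref{prop:action} must sit upstream of the \cite{CMMM} input, so (1) has to be established directly from the general torus-action theory of \cite{LoganThesis}, not deduced from cluster non-degeneracy.
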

    \begin{proof} (1-2) are general results, hold for any torus
      action on any log CY with max boundary, see \cite{LoganThesis}. (3) is
      an instance of this: $\theta_{W \in \Sk(G)}$ is an eigenfunction
      for the $T_E$ action on $A_{\Sk(G)}$, with weight given by
      the various $(1_Z)^{\trop}(W) = \delta_{Z,W}$. Consider (4).
      We have by (2-3)
      $$
      \theta_{W \in \Sk(G)}^{\trop}(v + Z) = \theta_{W \in \Sk(G)}^{\trop}(v)
      + \delta_{W,Z}.
      $$
      So if $v \in I_Z \coloneqq [Z > 0]$, $v + (-Z) \in C$, and
      $\theta_{v \in \Sk(V)} = \theta_{v + (-Z)} \cdot \theta_{Z \in \Sk(V)}$.
      This gives (4).

      For the final statement (about localisation): by (1) all the theta
      functions from $\bbZ \subset \Sk(V)(\bbZ)$ are invertible in
      $A_{\Sk(V)}$, so the localisation is a subring of $A_{\Sk(V)}$.
      $A_{\Sk(V)}$ is contained in the localisation by (2-3). This
      completes the proof.

      \end{proof}

   \subsection{$\ADM$ action} 

Different markings of $\Pic$ are related by the group
$\ADM$ as in \cite[5.15]{MLP}. By \cite{LZ23}, each quniversal family
$\bbX$ gives a fibrewise snc compactification of a small resolution of
$V \to T_{\Pic(X)}$. We have a modular action of $\ADM$ on $\bbX$, regular in
codimension one, see \cite[?.?]{MLP}, giving an equivariant action on
$V \to T_{\Pic(X)}$. In particular we
get an equivariant action on $\Sk(V) \to \Pic(X)$. 
We can extend the action on $\bbX$ to an action on 
an snc compactification $\bbX \subset \obbX$, regular in codimension one. This
gives an action of $\ADM$ on the mirror algebra $A_{\Sk(V),L_{\obbX}}$ (permuting
theta functions by the action on $\Sk(V)$
and curve class by the action on $\obbX$). 
$E \subset \partial \bbX$ are naturally
the {\it horizontal}
boundary divisor, those that dominate $T_{\Pic(X)}$, and these are fixed (individually)
by $\ADM$, thus $\ADM$ fixes the corresponding theta function, and thus
the action on $\Sk(V)$ preserves the cone $C \subset \Sk(V)$, and each of its
faces (see \cref{const:boundary}). Thus $\ADM$ preserves the subring
$A_{C,L_{\obbX}} \subset A_{\Sk(V),L_{\obbX}}$, and the weight space for any weight
$w \in \Pic(X)$ preserved by $\ADM$, e.g. the weight $W$ of \cref{cor:LZ}. Because $\ADM$
fixes each component of $E$, it also fixes the corresponding monomial $\pi$ of
\cref{cor:LZ}.
So we have a compatible action on the GIT quotient
$A_{C,L_{\obbX}}//W$, and then on both sides of the equality in \cref{cor:LZ}.

The $\ADM$ action on $A_{\Sk(V),L_{\obbX}}$ 
normalizes the
action of the boundary torus $\bbG_m^{\partial \bbX}$ and so gives an action
of $\ADM$ on the quotient, $Q \to T_{\Pic(\bbX)}$, see \cref{rem:cm},
permuting  the theta divisors (see \cref{prop:quotient}).
The above gives a compatible
action on $Q//W = (A_{C,L_{\obbX}}//W)/T_K$, notation as in the proof of
\cref{prop:quotient}. 

As noted at the start of the proof of \cref{thm:mpp}, the boundary of $(Q//W
,\partial) \to T_{\Pic(X)}$ (recall $\Pic(X) = \Pic(\bbX)$) has a marking identifying this with the universal family, see \cite[5.1]{LZ23}. The marking is determined by the values of certain theta
functions $\theta_P$ associated to $P \in w^{-1}(0)$. These $\theta$ functions
are $\ADM$ invariant, it follows that the $\ADM$ action preserves the marking.
This implies:

\begin{proposition} The action of $\ADM$ on $(Q//W = (A_{C,L_{\obbX}}//w)/T_K, \partial \to
  T_{\Pic(X)}$ is the modular action -- changing the marking of $\Pic$ and fixing the
  marking of the boundary cycle $E$.
\end{proposition}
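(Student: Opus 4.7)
The plan is to combine the universal (modular) property of $(\cX,\cE) \to T_{\Pic(X)}$ from \cite[4.24]{MLPv1} with the $\ADM$-invariance of the data used to pin down the identification with $(Q//W,\partial)$. First I would recall, from the paragraph immediately preceding the proposition, that the marking of $\Pic$ on $(Q//W,\partial) \to T_{\Pic(X)}$ which identifies it with the universal family is uniquely determined by the values of the theta functions $\theta_P$ for $P \in w^{-1}(0) = \Sk(U)(\bbZ)$, via the identification of \cref{prop:ext} together with \cref{cor:LZ}. The key input is that the subset $w^{-1}(0) \subset \Sk(V)$ is preserved by $\ADM$ (since $w: \Sk(V) \to \Pic(X)$ is $\ADM$-equivariant by construction, and $0 \in \Pic(X)$ is fixed), and the corresponding theta functions are permuted, but the subset itself is stable.

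Next I would verify that the identification $(Q//W,\partial)|_{T_{\Pic(X)}} \cong (\cX,\cE)$ is $\ADM$-equivariant, by running through the construction. Since $\ADM$ fixes each irreducible component of $E$ (as these are the horizontal boundary divisors of $\bbX$, cf.\ the discussion above the proposition), the induced action fixes the boundary marking on the right-hand side; this gives the second clause in the conclusion. Combined with the fact that $\ADM$ acts on $\Pic(X)$ by the given automorphism $\varphi \in \ADM$, and that this $\varphi$-action is what the $\ADM$ action on $A_{C,L_{\obbX}}$ induces on the weights of homogeneous elements (because $w$ intertwines the two $\ADM$ actions), the induced action on the base $T_{\Pic(X)}$ is exactly $T_\varphi$.

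Finally, I would invoke the modular property of the universal family $(\cX,\cE) \to T_{\Pic(X)}$: the group of automorphisms of the family covering $T_\varphi$ and preserving the marking of the boundary cycle $E$ is the singleton consisting of the modular lift of $\varphi$. Since the $\ADM$ action produced above is such an automorphism, it must coincide with the modular action, which gives both clauses of the proposition. The main obstacle will be the careful bookkeeping in the second step: one must check that the $\ADM$-action on the coefficient ring $L_{\obbX}$ (through its action on curve classes in $\obbX$) is compatible under the identifications of \cref{cor:LZ}, \cref{prop:ext}, and \cref{thm:mpp} with the $\ADM$-action coming from the modular structure on the universal family; this reduces, via the $\ADM$-invariance of $\pi$ and of $W \in \Pic(X)$, to the $\ADM$-equivariance of the theta function basis and the weight map $w$, both of which follow from the construction of the mirror algebra and the hypothesis that $\ADM$ acts through automorphisms of $\bbX$ over $T_{\Pic(X)}$.
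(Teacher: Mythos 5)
The proposal follows the paper's approach at the top level — establish that the $\ADM$ action preserves the marking of the boundary, then invoke the uniqueness of such fiberwise isomorphisms (\cite[4.23]{MLPv1}) to identify the $\ADM$ action with the modular action. However, there is a material discrepancy in the key intermediate step. The paper's argument that the marking is preserved rests on the assertion that the theta functions $\theta_P$ for $P \in w^{-1}(0)$ are $\ADM$-\emph{invariant} (each one individually fixed), and this invariance is precisely what guarantees the marking, which is determined by their values, does not move. You instead assert that these theta functions are \emph{permuted}, with only the subset $w^{-1}(0)$ stable. As stated, that is too weak: a nontrivial permutation of the $\theta_P$'s would correspondingly permute the data determining the marking, and the marking need not be preserved. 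You then pivot to a different argument — that $\ADM$ fixes each irreducible component of $E$ and hence the boundary \emph{labeling} — but the marking in the sense of \cite[5.1]{LZ23} is pinned down by the values of the $\theta_P$'s, not merely by the labeling of components, so this alternative does not obviously close the gap. To repair the proposal, one should prove (or cite, as the paper does) that the $\ADM$ action fixes each $\theta_P$ for $P \in w^{-1}(0) = \Sk(U)(\bbZ)$, which then directly yields preservation of the marking and the rest of the argument goes through unchanged.
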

\begin{proof} By the above for 
  $\alpha \in \ADM$ and $t \in T_{\Pic(X)}$, the map $\alpha$ from the Looijenga pair
  fibre over $t$ to the fibre over $\alpha(t)$ preserves the marking of the    boundary.
  The same is true for the modular action. Thus the two are equal, as there is
  only one such isomorphism, see \cite[4.23]{MLPv1}.
  \end{proof}

To recap:

\begin{theorem} \label{thm:invariant}  The modular action of $\ADM$ (resp. $\bbG_m^E$) on the universal
  family $(Q//w = (A_{C,L_{\obbX}}//w)/T_K, \partial \to
  T_{\Pic(X)}$ permute the theta divisors (resp. preserves each theta divisor). 
  \end{theorem}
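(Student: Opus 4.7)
The plan is to reduce both statements to two purely algebraic facts about the $\ADM$- and $\bbG_m^E$-actions on the mirror algebra $A_{C,L_{\obbX}}$, and then transport them across the identification $A_{C,L_{\obbX}} = \Cox(\bbX)$ of \cref{cox:cor}. The key observation is that each point $P \in \Sk(V)(\bbZ)$ yields a homogeneous theta basis element $\theta_P$, whose vanishing on $\bbX = Q//W$ cuts out the corresponding theta divisor $D_P$; hence the set of theta divisors is in bijection with the lines $k \cdot \theta_P \subset \Cox(\bbX)$, and the setwise action of any torus-equivariant ring automorphism on the theta divisors is determined by its action on these lines.

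First I would verify the $\ADM$ assertion. By \cite{LZ23} the modular $\ADM$-action on the quniversal family $\bbX$ restricts to an equivariant action on the small resolution $\bbV \to T_{\Pic(X)}$, and hence to a piecewise-affine action on $\Sk(V)$ permuting integer points. Because $\ADM$ preserves the cone $C$ and fixes each boundary-component index $Z \subset E$, functoriality of the mirror construction from \cite{KY22} yields
\[
\alpha^{\ast}\theta_{P} \;=\; u_{\alpha,P}\cdot \theta_{\alpha(P)}
\]
for some unit $u_{\alpha,P}\in L_{\obbX}^{\times}$ accounting for the $\ADM$-action on curve classes. Thus $\alpha$ sends the line $k\cdot \theta_{P}$ to $k\cdot \theta_{\alpha(P)}$ and correspondingly carries $D_{P}$ to $D_{\alpha(P)}$. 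Combined with the proposition preceding \cref{thm:invariant}, which identifies this $\ADM$-action on $Q//W$ with the modular one, this gives the first clause.

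Next I would treat the $\bbG_m^{E}$-action. By \cref{prop:action}(2) each $\theta_{P}$ is a $T_{E}$-eigenfunction whose weight depends only on $P$. Dually, the $\bbG_m^{E}$-action on $A_{C,L_{\obbX}}$ rescales each basis element $\theta_{P}$ by a single character, leaving each line $k\cdot \theta_{P}$, and hence each zero locus $D_{P}$, setwise fixed. To conclude I would identify this $T_{E}$-action with the modular $\bbG_m^{E}$-action on the universal family. This identification is built into the definition of the modular $\bbG_m^{E}$-action via changes of the boundary marking (the discussion preceding \cref{thm:modular}), and matches the $T_{E}$-grading on the mirror through the universal torsor description of \cref{prop:fut}.

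The main obstacle is the last compatibility check: passing from an algebraic action on $A_{C,L_{\obbX}}$ to a geometric action on divisors in the GIT quotient $\bbX = Q//W$. One must verify that both actions descend to $\bbX$ in a manner compatible with the $T_{A_1(X,\bbZ)}$-linearisation used to form the quotient, so that $V(\theta_{P}) \subset A_{C,L_{\obbX}}^{\mss}(W)$ descends unambiguously to the divisor $D_{P}\subset \bbX$. This follows from the universal torsor interpretation of \cref{prop:fut} together with the $T_{A_1(X,\bbZ)}$-equivariance of both the $\ADM$- and $\bbG_m^{E}$-actions on the mirror algebra; once recorded, the theorem drops out directly.
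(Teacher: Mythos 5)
Your proposal is correct and takes essentially the same route as the paper: define the $\ADM$- (resp.\ $T_E$-) action on the mirror algebra $A_{\Sk(V),L_{\obbX}}$, observe that theta functions are permuted (resp.\ scaled by characters), descend to the GIT quotient, and then identify the resulting action with the modular one. The paper in fact \emph{defines} the $\ADM$-action on the mirror algebra by permuting theta functions and acting on curve classes, rather than deriving the relation $\alpha^*\theta_P = u_{\alpha,P}\theta_{\alpha(P)}$ from a separate ``functoriality'' statement, and its key input is the proposition immediately preceding the theorem, which shows the induced action on $Q//W$ preserves the boundary marking and hence agrees with the modular one by uniqueness of marked isomorphisms (\cite[4.23]{MLPv1}). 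Your framing in terms of lines $k\cdot\theta_P\subset\Cox(\bbX)$ and the eigenfunction property from \cref{prop:action} is a harmless repackaging of the same content; the only point you should make sharper is the identification of the mirror $T_E$-action with the modular $\bbG_m^E$-action, which in the paper is justified the same way as the $\ADM$ case — by observing that the marking of the boundary of $Q//W$ is recorded by the theta functions $\theta_P$ for $P\in w^{-1}(0)$, and that the $T_E$-eigenvalue of such a $\theta_P$ records exactly the change of boundary marking.
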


  \section{Products and the basic conjectures} \label{sec:prod}

        Here we prove the following:

        \begin{proposition}
          Suppose the basic conjectures (double
        mirror, symmetry and independence) hold for smooth affine log CY
        $U$ with maximal boundary. Then they hold for $U \times \bbG_m$.
      \end{proposition}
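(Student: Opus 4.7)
The plan is to show that all three conjectures respect products with $\bbG_m$, reducing them to the $U$ case together with the trivial (toric) case of $\bbG_m$. Write $U' \coloneqq U \times \bbG_m$; its natural partial compactification (e.g.\ multiplying any smooth log CY compactification of $U$ by $\bbP^1$ with boundary $\{0,\infty\}$) still has maximal boundary. The first step is to establish a product decomposition
$$
A_{\Sk(U')} \;\cong\; A_{\Sk(U)} \otimes_{\bbZ} \bbZ[t,t^{-1}],
\qquad \theta_{(p,n)} \;=\; \theta_p \cdot t^n,
$$
matching $\Sk(U')(\bbZ) = \Sk(U)(\bbZ) \times \bbZ$. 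This requires identifying punctured Berkovich disks in $U'$ with pairs consisting of a disk in $U$ and a monomial map to $\bbG_m$, and checking that the naive counts of \cite[1.2]{KY22} factor: the count $\eta(P_1,\ldots,P_k,Q,C)$ with $P_i = (p_i, n_i)$ and $Q = (q,m)$ vanishes unless $\sum n_i = m$ (conservation of the $\bbG_m$-character along the puncture), and when the balance holds, it equals the count for the $U$-components. This is exactly what the basic convexity for the $\bbG_m$-action \cite[20.1]{KY22} provides once the $T_{\bbZ}$-equivariance from the $\bbG_m$ factor is identified with the second grading.

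Granted this product formula, the double mirror conjecture for $U'$ follows immediately: from the decomposition we read off $U'^{\vee} = \Spec(A_{\Sk(U')}) = V \times \bbG_m$ where $V = U^{\vee}$, and the same decomposition applied to $V \times \bbG_m$ gives
$$
(U'^{\vee})^{\vee} \;=\; (V \times \bbG_m)^{\vee} \;=\; V^{\vee} \times \bbG_m \;=\; U \times \bbG_m \;=\; U',
$$
where the penultimate equality is the double mirror conjecture for $U$ (up to coefficients as in \cref{rem:coeff}).

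The symmetry and independence conjectures for $U'$ reduce to the corresponding conjectures for $U$ by bookkeeping. For a point $(v,c) \in \Sk(V \times \bbG_m)(\bbZ) = \Sk(V)(\bbZ) \times \bbZ$, the product decomposition yields
$$
\theta_{(p,n) \in \Sk(U')}^{\trop}(v,c) \;=\; \theta_{p \in \Sk(U)}^{\trop}(v) \;+\; n \cdot c,
$$
and symmetry on $U$ combined with the trivial identity for the $\bbG_m$ factor gives symmetry on $U'$. For independence, any linear combination $\sum_i a_i \theta_{(v_i, c_i)}$ with non-zero coefficients can be partitioned by the value of $c_i$; within each group, independence for $U$ applies, while between groups the tropicalisations differ by an additive shift in the second coordinate (computed against any test point whose second coordinate is generic), so the minimum over all indices is realised termwise.

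The main obstacle is the first step, the factorisation of disk counts. Everything afterwards is mechanical, but this factorisation requires a careful description of the Berkovich spines and punctures in $(V \times \bbG_m)^{\an}$; the key input is that a punctured disk projecting to $\bbG_m$ is determined up to reparametrisation by the character along the puncture, which in turn should follow from the general $T$-equivariance results of \cite{LoganThesis} combined with \cite[20.1]{KY22}.
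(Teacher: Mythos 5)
Your overall plan — decompose $A_{\Sk(U \times \bbG_m)} \cong A_{\Sk(U)} \otimes \bbZ[t,t^{-1}]$ and then derive each conjecture from the corresponding statement for $U$ — is the paper's plan, and your handling of double mirror and symmetry matches the paper's essentially word for word. You have, however, mislocated the difficulty. You write that once the product decomposition of the mirror algebra is established, "everything afterwards is mechanical"; the paper accepts the product decomposition as folklore in a single sentence and then spends the entire proof on the independence step, which you dispatch in three lines. That imbalance is the tell.

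The gap is in your independence argument. You partition the sum $\sum_{u,k} a_{u,k}\,\theta_u t^k$ by the $\bbG_m$-degree $k$, invoke independence for $U$ within each $k$-group, and then claim that "between groups the tropicalisations differ by an additive shift in the second coordinate (computed against any test point whose second coordinate is generic), so the minimum over all indices is realised termwise." The word "so" is doing the entire work of the conjecture. What you need is that no cancellation occurs between terms in different $k$-groups at a \emph{fixed} test valuation $(v,c)$. For irrational $c$ the quantities $kc + v(g_k)$ are pairwise distinct across $k$, the minimum is attained uniquely, and the strict ultrametric inequality then does give equality — but you neither state this nor address the non-generic case. The independence conjecture is an equality of functions on all of $\Sk(V \times \bbG_m^\vee)$, including the integer (divisorial) points, which are exactly where ties occur. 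Passing from irrational $c$ to all $c$ requires a density-and-continuity argument for the tropicalisation, which you do not give, and which, while plausible, is not free.

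The paper's argument is genuinely different and avoids this issue entirely. It takes a mixed integer skeleton point $(x[D], y[0])$ directly, blows up $D \times \{0\}$, passes to the chart $t = Ty$ with $y$ a local equation for $D$, writes $\theta_u t^k = f_u T^k y^m$, and observes that divisibility of $\sum a_{u,k} f_u T^k$ by $y$ in the polynomial ring $R[T]$ over the DVR $R$ forces, \emph{separately for each} $k$, divisibility of $\sum_u a_{u,k} f_u$ by $y$ — which contradicts independence for $U$. The point is that the polynomial ring structure $R[T]/(y) = (R/y)[T]$ makes the $k$-groups visibly independent at the level of the local model, so no density argument is needed and the integer case is handled directly. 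Your generic-test-point route is conceptually clean but requires a continuity step you did not supply; the paper's blowup argument is concrete, self-contained, and works at exactly the points of $\Sk(V')(\bbZ)$ that the theory actually uses. If you want to keep your approach, you must state and justify the continuity of both sides of the independence equality on $\Sk(V')$ and run the density argument explicitly.
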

      \begin{proof} Skeleton and mirror algebra commute with product. 
        E.g. the mirror algebra of the product is the tensor product (as always,
        up to coefficients), and from this double mirror and symmetry follow
        easily. Now consider independence.
        Take
        $$
        v = (x[D],y[0]) \in \Sk(V)(\bbZ) \times \bbZ =
        \Sk(V \times \bbG_m^{\vee})(\bbZ)
        $$
        for positive integers $x,y$, where $D$ and $[0]$ indicate
        divisorial centers, $V$ is mirror to $U$, and $\bbG_m^{\vee}$ is the
        dual torus to $\bbG_m$. We will assume $x =y=1$, purely for
        notational simplicity, the same argument works in general.
        $v$ then has center 
        $$D \times \{0\} \subset X \times \bbA^1 \supset V \times \bbG_m^\vee$$
        for a partial minimal model $V \subset X$ with smooth irreducible boundary
        $D$. 
      $v$ is then a multiple of the divisorial valuation given by blowing up
      $D \times \{0\}$. The theta functions of $V \times \bbG_m^{\vee}$
      are $\theta_u t^k$ (for $t = z^{1} \in \bbZ[\Sk(\bbG_m)(\bbZ)]$). 
      We want to show that for non-zero constants $a_{u,k}$. 
        $v(\sum_u \sum_k a_{u,k} \theta_u T^k)$ is given by the minimum
        of $v(\theta_u T^k)$. We can assume 
      $v(\theta_u t^k)= v_D(\theta_u) + k \eqqcolon m$
      is constant over $u,k$. Let $y$ be a local equation for $D$. Then we
      blowup the ideal $(y,t)$ (embedding the blowup as usual in
      $V \times \bbA^1 \times \bbP^1_{Y,T}$), and consider the chart with
      $Y = 1$. So we have $t = Ty$, and $y$ cuts out the exceptional.
      Then $\theta_u = f_u y^{m-k}$ with $v_D(f_u) = 0$, and the combination is
      $$
      J \coloneqq (\sum_u \sum_k a_{u,k} f_u T^k) y^m \in R[T]
      $$
      where $R$ is the DVR for $D$. Suppose $J \in (y^{m+1})R[T]$.
      Then for each $k$,
      
      $\sum_u a_{u,k} f_u \in yR$. But then
      $$
      \sum_u a_{u,k} f_u y^{m-k}) = \sum_u a_{u,k} \theta_u \in y^{m-k+1}R
      $$
      so $$
      v_{D}(\sum_u a_{u,k} \theta_u) \geq m-k + 1 > \min v_{D}(\theta_u) = m-k
      $$
      which violates independence for $U$. 
      So $v(J) = m$. This completes the proof. 
    \end{proof}

    Now the basic three conjectures have all been proven in dimension two --
    independence and symmetry by Mandel and double mirror by Lai-Zhou. 
    \begin{remark} Here we will only need these basic conjectures in a
      very simple cases, see \cref{rem:tpic} below. \end{remark}
    
    So by the proposition we have:
    \begin{corollary} \label{cor:3} Let $U$ be smooth affine log CY surface with
      maximal boundary and trivial Picard group. Then the three basic
      conjectures hold for $G \to U$. 
    \end{corollary}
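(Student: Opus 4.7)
The plan is to reduce to the two-dimensional case by peeling off the torus factors. The starting observation is that $G \to U$ is a $T_{\Pic(X)}$-torsor over $U$, and since $T_{\Pic(X)}$ is (non-canonically) a product of copies of $\bbG_m$, such torsors are classified by $\Pic(U) \otimes_{\bbZ} \Pic(X)^*$. Under the hypothesis $\Pic(U) = 0$, this group vanishes, so $G$ is a trivial torsor. Hence after choosing a trivialisation we have
$$
G \cong U \times T_{\Pic(X)} \cong U \times \bbG_m^{r}
$$
where $r = \rank \Pic(X)$.

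Next I would invoke the known results in dimension two: for a smooth affine log CY surface $U$ with maximal boundary, symmetry and independence follow from \cite{CMMM} (or Mandel) and double mirror from \cite{LZ23}. So the three basic conjectures hold for $U$ itself.

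Finally, I would iterate the preceding proposition, which shows that if the three conjectures hold for a log CY $W$ then they hold for $W \times \bbG_m$. Applying this $r$ times, starting from $W = U$, gives the three conjectures for $U \times \bbG_m^r \cong G$. The main (and essentially only) obstacle is the triviality of the torsor, but this is immediate from $\Pic(U) = 0$, so the argument is straightforward. I note as in \cref{rem:tpic} that only this trivial-Picard case will be needed for the applications, so no more refined product statement is required.
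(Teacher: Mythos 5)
Your argument is the same as the paper's: the paper's one-line proof states that $G \to U$ is a trivial bundle since $U$ has trivial Picard group, and the corollary is then deduced by iterating the preceding product proposition, starting from the known two-dimensional results of Mandel and Lai--Zhou. Your extra detail about classifying $T$-torsors by $\Pic(U) \otimes_{\bbZ} \Pic(X)^*$ is correct and simply spells out the triviality step.
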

    \begin{proof} $G \to U$ is a trivial bundle since $U$ has trivial Picard group.
    \end{proof}

    \begin{remark} \label{rem:tpic} Trivial Picard group is a very strong condition on $U$. It
      is equivalent to the existence of a smooth minimal compactification with
      a toric model of relative Picard number at most two, with at most one
      exceptional divisor meeting any given boundary divisor. In these simple
      cases the mirror algebra has a simple explicit expression, and any of the basic conjectures can be checked by hand. 
            \end{remark}

      \begin{corollary} \label{cor:crtp} 
        \cref{thm:modular} holds as long as 
         $U$ has trivial Picard group. 
    \end{corollary}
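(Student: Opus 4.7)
The plan is to reduce all of \cref{thm:modular} to the formal apparatus of \cref{rem:conjs}, which becomes available precisely because $\Pic(U) = 0$. Under this hypothesis the universal torsor $G \to U$ is trivial, so \cref{cor:3} supplies all three basic conjectures (Double Mirror, Symmetry, Independence) for the pair $(G,V)$. The general philosophy, recorded in \cite[24.6]{KY22}, then packages these three conjectures into a canonical $\Pic(X)$-graded ring isomorphism $A_{C,L_X} \cong \Cox(\bbX)$ matching the theta function basis of $A_C$ with a canonical basis of each $H^0(\bbX,M)$, the matching being canonical up to individual scaling by characters of $T_{\langle E\rangle}$. This already gives the vector-space content of \cref{thm:simple} and the independence of the theta basis from the splitting $T_{\Pic(X)} \subset T_{\obbX}$.

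Given this identification, I would then read off the modular statement $(A_{C,L_X}//L,\partial) \cong \bbX$ for GIT-general ample $L$ by combining it with the Hu--Keel principle \cite{HK96}: every SQM of $\bbX$ is recovered as a GIT quotient of $\Cox(\bbX)$ by $T_{A_1(X,\bbZ)}$ with an ample linearization. The boundary structure matches via \cref{const:boundary} and \cref{prop:action}, the agreement with the universal family on the interior is supplied by \cref{cor:versal} (which rests on \cite{LZ23}), and the disk-count structure constants are automatic from the construction of the mirror algebra in \cite{KY22}. The $\ADM$ and $\bbG_m^E$ action statements follow verbatim from \cref{thm:invariant}, whose proof does not rely on the induction on charge.

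The main technical point to address carefully is coefficient bookkeeping: \cite[24.6]{KY22} produces $A_C \cong \Cox(\cT)$ only \emph{up to coefficients} in the sense of \cref{rem:coeff}, and one must upgrade this to an isomorphism over $L_{\obbX}$ compatible with the modular structure from \cite{LZ23}. In the charge-at-most-one case, however, \cref{rem:tpic} provides an essentially toric-style explicit description of both sides of the identification, so the upgrade is routine. This is precisely what makes the trivial-Picard case the natural base of the induction on charge carried out in the rest of the paper.
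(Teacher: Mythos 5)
Your overall route is the same as the paper's: use \cref{cor:3} for the three basic conjectures (available because $G \to U$ is a trivial torus bundle), invoke the general philosophy of \cite[24.6]{KY22} to get $A_{C,L_X} \cong \Cox(\bbX)$, and finish with Hu--Keel VGIT. The paper's proof, however, leans heavily on a structural observation you don't exploit: when $\Pic(U)=0$ there is \emph{no moduli} -- the double mirror is literally $A_{\Sk(U),L_X}\otimes\bbZ[\Pic(X)]$, $\bbV = U\times T_{\Pic(X)}$, $\bbG = T_{A_1(X,\bbZ)}\times\bbV$, there are no roots, and the unique (trivial) quniversal family coincides with the universal one, $\bbX = X\times T_{\Pic(X)}$. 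That triviality is what lets the paper discharge all of \cref{thm:modular} by a direct, elementary VGIT argument.

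This matters because one of your steps has a genuine circularity problem. You claim the $\ADM$ and $\bbG_m^E$ statements ``follow verbatim from \cref{thm:invariant}, whose proof does not rely on the induction on charge.'' But \cref{thm:invariant} is established via the marked-period-point identification set up in and around \cref{thm:mpp}, and \cref{thm:mpp} rests on \cref{cor:tm} and \cref{prop:induct}, which are proved by induction on the charge with \cref{cor:crtp} as base case. Citing \cref{thm:invariant} here would therefore be circular. Similarly, reaching the identification $A_{C,L_X} \cong \Cox(\bbX)$ by way of \cref{cox:cor} (rather than just its proof technique) would pull in \cref{cor:dm}, \cref{prop:fut}, and again \cref{thm:mpp}. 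The way out, which is what the paper does, is to verify the Double Mirror statement directly from the product decomposition -- not via \cref{cor:dm} -- and then handle the modularity and group-action statements by hand using the explicit descriptions (everything is a product, $\ADM$ and $\bbG_m^E$ act transparently on $X\times T_{\Pic(X)}$, and there is a single trivial quniversal family). Your instinct about the coefficient bookkeeping is correct, and the product structure is precisely what makes the upgrade to $L_{\obbX}$ coefficients routine; but you should also lean on it to avoid the forward references above.
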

    \begin{proof} In this case $G \to U$ is trivial, and so the same
      holds for the absolute mirror, $V \to T_{\Pic(X)}$, and the
      double mirror $A_{\Sk(V),L_X} = A_{\Sk(U),L_X} \otimes \bbZ[\Pic_X]$,
      $\bbV = U \times T_{\Pic(X)}$, and the $T_{A_1(X,\bbZ)}$-bundle
      $$
      \Spec(A_{\Sk(V),L_X}) = \bbG \to \bbV = T_{A_1(X,\bbZ)} \times \bbV,
      $$
      notation as \cref{cor:dm}. Symmetry and Independence give $A_{C,L_X} = \Cox(\bbX)$
      (see e.g. the proof of \cref{cox:cor} below). Note there is no moduli, in these cases:
      $\bbX = X \times T_{\Pic(X)}$;  no roots, and just a
      single (trivial) quniversal $=$  universal family. Now
      \cref{thm:modular} follows from VGIT of the
      $\Cox$ ring (in the very simple surface case), \cite{HK96}. 
      This completes the proof. 
    \end{proof}

  \section{The canonical mirror} \label{sec:canmir}
  Here we take $U$ a smooth affine log CY with  maximal boundary.
    
  Then for each compactification $U \subset \oV$ we have the mirror $L_{\oV}$ algebra
  $A_{\Sk(V), L_{\oV}}$. This depends on the compactification (though the fibres of
  the family do not, see \cref{rem:coeff}). One way to remove the
  dependence is to take the base extension
  $L_{\oV} \twoheadrightarrow \bbZ[N_1(V,\bbZ)]$, see \cite[?.?]{KY22}.
  But in doing this we can lose moduli. Here we give an alternative, in
  slightly greater generality: We begin with $U \subset V$ a partial snc
  compactification, and we attach a canonical mirror to $V$ under 
  the assumption:

  \begin{assumption} \label{ass:pic} We assume $\Pic(V)$ is free Abelian.
  \end{assumption}

  \begin{remark} If $V$ in \label{ass:pic} is projective, then the condition
    always holds: $V$ is rationally connected, which implies the vanishing
    of $H^i(V,\cO_V)$, $i > 0$, and thus equality $\Pic(V) = H^2(V,\bbZ)$.
    Then $\tor(\Pic(V)) = \tor H_1(V,\bbZ)$, which is trivial as rationally
    connected implies simply connected. \todo{can we get references? see
      kollar miyaoka mori on rationally connected}.

    In general we will have $\tor(\Pic(V)) = \tor H_1(V,\bbZ)$, and this
    can be non-trivial. But the universal cover will be a finite \'etale
    cover, itself log CY, perhaps one can extend the construction we
    give here by first
    going to this cover. 
    \end{remark}
  
    Now we have $\Pic(\oV) \twoheadrightarrow \Pic(V)$. Choose  a splitting, $s$
    (which exists by \cref{ass:pic}).
    
  We then have the base extension
  $$
  A_{\Sk(V),\bbZ[\Pic(V)^*],s} \coloneqq
  A_{\Sk(V),L_{\oV}} \otimes_{L_{\oV}} \bbZ[\Pic(V)^*].
  $$

  We let $L_V \coloneqq \bbZ[\Pic(V)^*]$. 

  \begin{proposition} \label{prop:quotient} The isomorphism class of this $L_V$-algebra, and the $\bbZ$-module basis $\theta_P z^C$, $P \in \Sk(V)$,
    $C \in \Pic(V)^*$ are independent of the splitting $s$, and the
    compactification $V \subset \oV$. 

    The $\theta$-function basis is independent of these choices up to 
    up to multiplication by (individual) characters $z^C \in L_V$,
    $C \in \Pic^*(V)$. \end{proposition}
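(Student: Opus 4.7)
The plan is to reduce both independence claims to the $T_{<D>}$-equivariance of the mirror algebra over $L_{\oV}$ recorded in \cref{rem:coeff}. Recall $<D> \subset \Pic(\oV)$ is the subgroup generated by the irreducible components of $\oV \setminus V$, and each $\theta_P$ is a $T_{<D>}$-eigenfunction whose weight depends on $P \in \Sk(V)(\bbZ)$.

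First I would handle independence of the splitting $s$ for fixed $\oV$. Two splittings $s_1, s_2 : \Pic(V) \to \Pic(\oV)$ of $\Pic(\oV) \twoheadrightarrow \Pic(V)$ differ by a homomorphism $\delta : \Pic(V) \to <D>$. On coordinate rings this yields two distinct maps $L_{\oV} \to L_V$, corresponding dually to two closed immersions $\Spec L_V \hookrightarrow \Spec L_{\oV}$ related by translation along the section $T_{\Pic(V)} \to T_{<D>} \subset T_{\Pic(\oV)}$ dual to $\delta$. By the $T_{<D>}$-equivariance of $\Spec A_{\Sk(V), L_{\oV}}$ over $\Spec L_{\oV}$, the two restrictions become canonically isomorphic as $L_V$-algebras, and the isomorphism rescales each $\theta_P$ by the explicit character $z^{C_P} \in L_V$ obtained by pairing the $T_{<D>}$-weight of $\theta_P$ with $\delta$, as required.

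Next I would handle independence of the compactification $\oV$. Any two snc compactifications $\oV_1, \oV_2$ can be dominated by a common snc compactification $\oV_3$. Applying \cref{rem:coeff} to each modification $\oV_3 \to \oV_i$ and passing to common coefficients identifies $A_{\Sk(V), L_{\oV_i}}$ with $A_{\Sk(V), L_{\oV_3}}$ as algebras with basis indexed by $\Sk(V)(\bbZ)$. A splitting $s_i$ for $\oV_i$ composes with the natural injection $\Pic(\oV_i) \hookrightarrow \Pic(\oV_3)$ to yield a splitting for $\oV_3$, and the further base change to $L_V$ along compatible splittings produces the same $L_V$-algebra and basis on all three sides. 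For arbitrary (possibly incompatible) splittings, combining with the first step shows that the resulting bases differ only by individual character scaling.

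The main obstacle will be organizational rather than substantive: one must track the $T_{<D>}$-weights of the $\theta_P$ through these base changes, and verify that the natural maps between the Picard groups, their duals, and the associated tori for $V, \oV_1, \oV_2, \oV_3$ assemble into commutative diagrams compatible with the chosen splittings, so that the basis identification through $\Sk(V)(\bbZ)$ is preserved. The key input — that the theta function basis is canonical up to $T_{<D>}$-scaling — is already provided by \cref{rem:coeff}; this proposition refines the scaling to individual characters of $L_V$ after the base extension, which follows by tracking the $T_{<D>}$-weights explicitly through the splittings.
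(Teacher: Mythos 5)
Your proposal takes essentially the same approach as the paper's proof: both rely on the $T_{<D>}$-equivariance of the mirror family over $T_{\Pic(\oV)}$ and the fact that theta functions are eigenfunctions for this action, with two splittings differing by a map into the kernel $K = \ker(\Pic(\oV) \twoheadrightarrow \Pic(V))$. The paper packages this as a free $T_K$-quotient of the mirror family (the quotient being intrinsic), whereas you phrase it as a translation between the two section-restrictions; these are dual formulations of the same argument, and your handling of compactification independence via a common domination and \cref{rem:coeff} is likewise the intended reduction.
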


  \begin{proof} Let $D \coloneqq \oV \setminus V$. Choose a splitting of
    $\bbZ^D \twoheadrightarrow K$, where $K \subset \Pic(\oV)$ is the kernel
    of $\Pic(\oV) \twoheadrightarrow \Pic(V)$ (we note that $\Pic(\oV)$
    is a lattice, so $K$ is as well). This now gives an equivariant action
    of $T_K$ on the mirror family over $T_{\Pic(\oV)}$, free, as the action
    on the base is free. It is clear the quotient is
    
    $\Spec(A_{\Sk(V),\bbZ[\Pic(V)^*],s})$. This shows the independence on $s$ (and
    also the splitting for $K$). Now any two splittings are related by
    a homomorphism $T_{\Pic(V)} \to T_K \subset T_{\bbZ^D}$. Now the theta
    function statement follows as the $\theta$-functions are eigenfunctions
    for the action of $T_{\bbZ^D}$. This completes the proof.
    \end{proof} 

    \begin{remark} \label{rem:cm}
      An entirely analogous construction gives a mirror for any
      snc partial minimal model $V \subset V'$ with $\Pic(V')$ a finitely generated
      Abelian group: We complete to $V' \subset \oV$, and do the same thing as above
      replacing $V$ by $V'$. The resulting algebra $A_{\Sk(V),L_{V'}}$ has canonical
      $\bbZ$-module basis, and an $L_{V'}$ module base canonical up to multiplication
      by monomials $z^C \in L_{V'}$. 
     \end{remark} 
  
     For $V$ projective, the change in the canonical (which is the same
     as the mirror from \cite{KY22}) mirror for different compactifications
     of $U$ is studied in \cite[\S 17]{KY20}. Similar results apply in
     general, for example:

     \begin{proposition} \label{prop:cq} Let $V$ satisfy \cref{ass:pic}, and let
       $V' \to V$ be proper and birational, with exceptional locus
       disjoint from $U$, so that $U \subset V'$ is itself snc. We have
       a canonical isomorphism
       $$
       A_{V',L_{V'}} \otimes_{L_{V'}} L_V = A_{V,L_V}
         $$
         preserving theta functions,
         where $L_V' \twoheadrightarrow L_V$ is induced by
         the pullback $\Pic(V) \to \Pic(V')$.
       \end{proposition}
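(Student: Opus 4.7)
The plan is to reduce the statement to the base-change behaviour of mirror algebras under change of snc compactification (\cref{rem:coeff}), and then apply the construction of the canonical mirror from \cref{sec:canmir}. Throughout, the essential skeleton is intrinsic to $U$, so the $\theta$-function bases of $A_{V,L_V}$ and $A_{V',L_{V'}}$ are both canonically indexed by $\Sk(U)(\bbZ)$.

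First I would construct compatible snc compactifications. Choose an snc compactification $V \subset \oV$; the composition $V' \to V \hookrightarrow \oV$ is regular, so starting from any snc compactification $V' \subset \oV_0$ and resolving the induced rational map $\oV_0 \dashrightarrow \oV$ by Hironaka blowups supported on $\oV_0 \setminus V'$, I obtain an snc compactification $V' \subset \oV'$ together with a regular proper birational $\pi \colon \oV' \to \oV$ extending $V' \to V$. Since $V' \to V$ is by hypothesis an isomorphism over $U$, so is $\pi$, and $\oV, \oV'$ are two snc compactifications of the same affine log CY $U$ related by a proper birational map that is an isomorphism over $U$.

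Second, I would apply the base-change principle of \cref{rem:coeff} to $\pi$: the mirror algebras $A_{U \subset \oV}$ and $A_{U \subset \oV'}$ share the $\Sk(U)(\bbZ)$-indexed $\theta$-basis and are canonically identified after suitable base change of their coefficient rings. For the canonical mirror of \cref{sec:canmir}/\cref{rem:cm}, choose splittings $s \colon \Pic(V) \hookrightarrow \Pic(\oV)$ and $s' \colon \Pic(V') \hookrightarrow \Pic(\oV')$ such that the square
\[
\begin{tikzcd}
\Pic(V) \arrow[r] \arrow[d, "s"'] & \Pic(V') \arrow[d, "s'"] \\
\Pic(\oV) \arrow[r, "\pi^*"] & \Pic(\oV')
\end{tikzcd}
\]
commutes (possible because each Picard group is free Abelian by \cref{ass:pic}). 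Dualising and passing to group rings gives a commutative square of surjections of coefficient rings; in particular the two paths $L_{\oV'} \twoheadrightarrow L_{V'} \twoheadrightarrow L_V$ and $L_{\oV'} \twoheadrightarrow L_{\oV} \twoheadrightarrow L_V$ agree.

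Third, tensoring the step-two identification against $L_V$ along this common surjection yields
\[
A_{V',L_{V'}} \otimes_{L_{V'}} L_V \;=\; A_{U\subset \oV'} \otimes_{L_{\oV'}} L_V \;=\; A_{U\subset \oV} \otimes_{L_{\oV}} L_V \;=\; A_{V,L_V},
\]
an isomorphism of $L_V$-algebras preserving the $\theta$-basis (up to the individual character ambiguity recorded in \cref{prop:quotient}); independence of the choice of splittings $s, s'$ also follows from \cref{prop:quotient}. The principal obstacle is justifying that \cref{rem:coeff} applies in the required form between $\oV$ and $\oV'$: the statement there is phrased for snc compactifications of an affine LCY, and one must verify that it persists (or generalises naturally) to the situation where $\pi$ is an isomorphism only over the open $U$, not over the full partial minimal models $V, V'$. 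A secondary point is that the $\theta$-functions transform correctly under the chain of base changes, but this is automatic from their description as eigenfunctions for the boundary torus actions, which all of the base-change maps in question preserve.
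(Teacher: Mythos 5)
Your argument follows essentially the same route the paper takes — the paper's proof is a one-line citation to the second displayed isomorphism in \cite[17.3]{KY20} (the change-of-compactification base-change principle recalled in \cref{rem:coeff}), and you spell out exactly that: build a compatible pair of snc compactifications $\oV' \to \oV$, invoke the base-change identification, and then tensor down to $L_V$.

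Two remarks on the details. First, the obstacle you flag at the end is not really one: the birational map $\oV'\to\oV$ you construct is an isomorphism over the affine log CY $U$, and that is exactly the hypothesis under which \cref{rem:coeff} (and \cite[\S 17]{KY20}) is stated. The partial minimal models $V,V'$ play no role in that comparison; both $\oV$ and $\oV'$ are just snc compactifications of $U$, and the theta bases of all the mirror algebras in sight are indexed by $\Sk(U)(\bbZ)$, as you note. Second, your justification for the commuting square of splittings is slightly off. Freeness of the individual Picard groups (from \cref{ass:pic}) guarantees that sections $s$ and $s'$ exist, but not that they can be chosen compatibly; the relevant fact is that $\Pic(V')/\iota(\Pic(V))$ is free, since $V'\to V$ is a proper birational morphism of smooth varieties and its exceptional prime divisors give a free complement to $\iota(\Pic(V))\subset\Pic(V')$. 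Once that cokernel is free, the partial section $j\circ s$ on $\iota(\Pic(V))$ extends to a full section $s'$, and the square commutes. (Alternatively one could simply observe, as you hint, that \cref{prop:quotient} already makes the resulting $L_V$-algebra independent of these choices up to multiplication of the $\theta$-basis by individual characters, so the precise compatibility of $s$ and $s'$ is not strictly needed to get the stated canonical isomorphism.)
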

       \begin{proof} See the proof of the second displayed isomorphism
         in \cite[17.3]{KY20}, the same argument applies.
         \end{proof} 
  
\bibliographystyle{plain}
\bibliography{dahema}

\end{document}